\newtheorem{theorem}{Theorem}[section]
\newtheorem{lemma}[theorem]{Lemma}
\newtheorem{proposition}[theorem]{Proposition}
\theoremstyle{remark}
\newtheorem{remark}[theorem]{\it \bf{Remark}\/}
\numberwithin{equation}{section}
\def\section{\@startsection{section}{1}%
  \z@{1.5\linespacing\@plus\linespacing}{.5\linespacing}%
  {\normalfont\bfseries\large\centering}}
\newcommand{\be}{\begin{equation}}
\newcommand{\ee}{\end{equation}}
\newcommand{\bea}{\begin{eqnarray}}
\newcommand{\eea}{\end{eqnarray}}
\newcommand{\bee}{\begin{eqnarray*}}
\newcommand{\eee}{\end{eqnarray*}}
\def\RR{\mathbb{R}}
\def\supess{\mathop{\operator@font Sup\,ess}}
\def\RR{\mathbb{R}}
\def\R2+{\RR ^2_+}
\def\lim{\mathop{\rm lim}}
\def\sup{\mathop{\rm sup}}
\def\log{{\rm log}}
\def\ba{\begin{array}}
\def\ea{\end{array}}
\title{Spectral projectors, resolvent, and Fourier restriction on the hyperbolic space}
\author[P. Germain and T. L\'eger]{Pierre Germain and Tristan L\'eger}
\address{Courant Institute of Mathematical Sciences, New York University,
251 Mercer Street, New York, NY 10012, USA}
\email{pgermain@cims.nyu.edu}%
\address{Princeton University,  Mathematics  Department,  Fine Hall,Washington Road,  Princeton,  NJ 08544-1000,  USA}
\email{tleger@princeton.edu}%
\begin{document}

\begin{abstract} We develop a unified approach to proving $L^p-L^q$ boundedness of spectral projectors,  the resolvent of the Laplace-Beltrami operator and its derivative on $\mathbb{H}^d$. In the case of spectral projectors, and when $p$ and $q$ are in duality, the dependence of the implicit constant on $p$ is shown to be sharp. We also give partial results on the question of $L^p-L^q$ boundedness of the Fourier extension operator. As an application, we prove smoothing estimates for the free Schr\"{o}dinger equation on $\mathbb{H}^d$ and a limiting absorption principle for the electromagnetic Schr\"{o}dinger equation with small potentials. 
\end{abstract}

\maketitle

\tableofcontents

\section{Introduction}

\subsection{Fourier restriction, extension, and spectral projectors on the hyperbolic space} The aim of this section is to give a very succinct definition of the objects of interest in this paper. A more thorough presentation can be found in Section~\ref{HAHS}.

We adopt the hyperboloid model for the hyperbolic space: denoting the Minkowski metric on $\mathbb{R}^{d+1}$ by 
$$
[x,y] = x^0 y^0 - x^1 y^1 - \dots - x^d y^d, \qquad x,y \in \mathbb{R}^{d+1},
$$
we let
$$
\mathbb{H}^d = \{ x\in \mathbb{R}^{d+1}, \; [x,x] = 1, \; x^0>0 \},
$$
and endow this manifold with the metric induced by Minkowski's metric. The Laplace-Beltrami operator on $\mathbb{H}^d$ is denoted $\Delta_{\mathbb{H}^d}$. It has spectrum $(-\infty,-\rho^2]$, with
$$
\rho = \frac{d-1}{2}.
$$
The Helgason Fourier transform is given by
$$
\widetilde{f}(\lambda,\omega) = \int_{\mathbb{H}^d} f(x) h_{\lambda,\omega}(x) \,dx, \qquad h_{\lambda, \omega}(x) = [x \,,\, (1,\omega)]^{i \lambda - \rho}, \qquad (\lambda,\omega) \in \mathbb{R}_+ \times \mathbb{S}^{d-1}.
$$
By analogy with the Euclidean case, we define the restriction Fourier operator to frequencies of size $\lambda >0$ by
$$
[R_\lambda f](\omega) = | \textbf{c}(\lambda)|^{-1} \widetilde{f}(\lambda,\omega), \qquad \omega \in \mathbb{S}^{d-1}
$$
(mapping functions on $\mathbb{H}^d$ to functions on $\mathbb{S}^{d-1}$), where $ \textbf{c} (\lambda)$ is the Harish-Chandra function. Its dual with respect to the $L^2$ scalar products on $\mathbb{H}^d$ and $\mathbb{S}^{d-1}$ is the extension Fourier operator (here $\omega_{d-1}$ denotes the measure of $\mathbb{S}^{d-1}$)
\begin{equation}
\label{defextension}
[E_\lambda f](x) = | \overline{\textbf{c} (\lambda)|}^{-1}  \frac{1}{\omega_{d-1}} \int_{\mathbb{S}^{d-1}} f(\omega) \overline{h_{\lambda,\omega} (x)}\,d\omega, \qquad x \in \mathbb{H}^d
\end{equation}
(mapping functions on $\mathbb{S}^{d-1}$ to functions on $\mathbb{H}^d$). The spectral projectors $P_\lambda$ for $\Delta_{\mathbb{H}^d}$ satisfy
$$
-\Delta_{\mathbb{H}^d} = \rho^2 + \int \lambda^2 P_\lambda \,d\lambda,
$$
and are given by
$$
P_{\lambda} = E_\lambda R_\lambda \quad \mbox{or equivalently} \quad P_{\lambda} f(x) = | \textbf{c} (\lambda)|^{-2} \frac{1}{\omega_{d-1}} \int_{\mathbb{S}^{d-1}} \widetilde{f}(\lambda,\omega)  \overline{h_{\lambda,\omega} (x)}\,d\omega.
$$
Finally, we denote
$$
D = \sqrt{-\Delta_{\mathbb{H}^d}-\rho^2}
$$
and $m(D)$ for the Fourier multiplier with symbol $m(\lambda)$.

Finally as a convention we use the notation ``$\lesssim$" to mean ``$\leqslant C$" for some numerical constant $C$ that does not depend on the parameters of the problem. When we write ``$\lesssim_{\alpha}$" for some parameter $\alpha,$ then the implicit constant is allowed to depend said parameter.

\subsection{Background} The question of the boundedness of the spectral projectors from $L^{p'} \to L^p$ is equivalent to that of the boundedness of the Fourier extension operator from $L^2 \to L^p$. This is true on the hyperbolic space, considered here, as well as on the Euclidean space, where this question was first considered. Optimal bounds were obtained by Tomas \cite{To} except for the endpoint case, which is due to Stein. 
These works of Tomas and Stein were the starting point of the Fourier restriction theory, which has flourished since, and proved to be related to a number of mathematical fields, from geometric combinatorics to number theory. We refer to the textbook by Demeter~\cite{Demeter} for an account of these developments.

The question of extending these ideas to more general Riemannian manifolds was first investigated by Sogge in the case of the sphere~\cite{So1} and for general compact manifolds~\cite{So2}. This ultimately led to Theorem 5.1.1 in his textbook~\cite{So3}, which is optimal for general manifolds of finite geometry. A difficult and interesting problem is to understand the relation between the geometry of the manifold and the boundedness properties of the spectral projectors. 

A closely related question is that of the boundedness of resolvent operators. A foundational paper in this direction is due to Kenig, Ruiz and Sogge~\cite{KRS}, who considered this question for second order, constant coefficient operators on the Euclidean space. Their results were later improved by Guti\'errez~\cite{G} in the case of the Laplacian. The case of general compact manifolds was also considered, see in particular Dos Santos Ferreira-Kenig-Salo~\cite{DSFKS} and Bourgain-Shao-Sogge-Yao~\cite{BSSY}.

Turning to $L^p$ harmonic analysis on the hyperbolic space, an important non-Euclidean feature is the Kunze-Stein phenomenon, see Cowling~\cite{Co} and Ionescu \cite{I1}. For instance, it plays a crucial role in the derivation of dispersive estimates on $\mathbb{H}^d$ (Ionescu-Staffilani \cite{IS}, Anker-Pierfelice \cite{AP}). We will heavily rely on it in the present paper, especially to deal with low frequencies.
Another research line on $\mathbb{H}^d$ has been the $L^p$ boundedness of Fourier multipliers, for which we refer to Clerc-Stein~\cite{CS}, Taylor~\cite{Taylor} and Anker~\cite{Anker} (see also \cite{I3,I4,MV} for refinements). The question of the boundedness of  spectral projectors was considered by Huang-Sogge \cite{HS} and Chen-Hassell \cite{CH}, who were able to identify Lebesgue spaces over which the spectral projector is bounded. In the present article we go further, obtaining and proving sharpness of the  dependence of the bounds on the Lebesgue exponents. 

\subsection{Obtained results: boundedness of spectral projectors and resolvent}
As mentioned above, this problem has already been considered in the literature;
upper bounds for the $L^{p'}-L^p$ operator norm of spectral projectors were proved in \cite{HS} for the same exponents as in the euclidean case, namely $2 < p \leqslant p_{ST}:=\frac{2(d+1)}{d-1}$. This result was improved in \cite{CH} where the range of exponents is extended to all $p>2$. This work also contains $L^p-L^q$ bounds for the resolvent operator, for the same range of exponents as in the euclidean case (namely $\frac{1}{p}-\frac{1}{q} = \frac{2}{d}$ and $\min \bigg(\big \vert \frac{1}{p}-\frac{1}{2} \big \vert , \big \vert \frac{1}{q} - \frac{1}{2} \big \vert \bigg) > \frac{1}{2d} $ see Kenig-Ruiz-Sogge \cite{KRS}).  The question of boundedness of the resolvent specifically on hyperbolic spaces was first considered in \cite{LR}. On more general noncompact symmetric spaces see \cite{AL}.In the special case of dimension 2, estimates for the resolvent and derivative of the resolvent were obtained in \cite{L} (it was originally motivated by the question of asymptotic stability of harmonic maps for wave maps $\mathbb{R} \times \mathbb{H}^2 \rightarrow \mathbb{H}^2$).
 
In the present paper we improve on these results in several respects.  First regarding $L^{p'}-L^p$ boundedness of the restriction operator $P_{\lambda}$, we refine the results of Chen-Hassell \cite{CH} by keeping track of the dependence of the implicit constant on $p$. We also show that the dependence on $\lambda,p$ obtained is optimal by providing examples that saturate these bounds. More precisely we adapt the radial and Knapp examples to the hyperbolic setting. 
We also extend the upper bound to the case where the exponents are not in duality, that is we prove $L^s-L^q$ boundedness for $s \in [1,2), q \in (2,\infty].$
Regarding resolvent estimates, we show boundedness for a wider range of exponents than was previously known in the case of the hyperbolic space. We also keep track of the dependence on the spectral parameter in these estimates.

Finally we give simple applications of these results, and obtain smoothing estimates for the free Schr\"{o}dinger equation, as well as a limiting absorption principle for the electromagnetic Schr\"{o}dinger equation on $\mathbb{H}^d.$

We develop a novel approach to these questions centered around spectral projectors. Indeed it has long been noticed that these objects are ubiquitous in Harmonic Analysis, see \cite{St1,St2} for example. Our method allows for a unified treatment of restriction and resolvent estimates. More precisely, we express the resolvent in terms of spectral projectors near its singularity. This reduces the problem to proving estimates the restriction operator.

As in the classical proof of the Stein-Tomas theorem in the euclidean case, we rely on a dyadic decomposition of the kernel of the restriction operator. A notable difference is that we localize in frequency rather than physical space. This allows us to straightforwardly deduce estimates for the derivative of the resolvent as well.

Note that this approach is not specific to the hyperbolic case, and can be used in the euclidean case as well. 

\begin{theorem}[Boundedness of the spectral projector] \label{thm-projector}
For $\Lambda>1$ and $2  <  p \leq \infty$, 
$$
\| P_{\Lambda} \|_{L^{p'} \to L^p}  \lesssim 
\left\{ \begin{array}{ll}
\Lambda^{d-1-\frac{2d}{p}} & \mbox{if $p \geq p_{ST} = \frac{2(d+1)}{d-1}$} \\
\left[ (p-2)^{-1} + 1 \right] \Lambda^{(d-1) \left( \frac{1}{2} -\frac{1}{p} \right) } & \mbox{if $2 < p \leq p_{ST}$}
\end{array} \right.
$$
Furthermore, these bounds are optimal. 

Moving away from the line of duality, we now choose the source Lebesgue exponent $s \in [1,2)$, and the target Lebesgue exponent $q \in (2,\infty]$. Then, there holds
$$
\| P_\Lambda \|_{L^{s} \to L^q} \lesssim
\left[ \Lambda^{\frac{d-1}{2}-\frac{d}{q}} + \left[ (q-2)^{-1/2} + 1 \right] 
\Lambda^{\frac{(d-1)}{2} \left( \frac{1}{2} -\frac{1}{q} \right) }\right]  \left[ \Lambda^{\frac{d}{s}-\frac{d+1}{2}} + \left[ (2-s)^{-1/2} + 1 \right] \Lambda^{\frac{(d-1)}{2} \left( \frac{1}{s} -\frac{1}{2} \right) }\right].
$$
\end{theorem}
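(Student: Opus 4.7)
The plan is to combine a careful analysis of the kernel of $P_\lambda$ with complex and real interpolation on the duality line, and then to derive the off-duality estimate by factorization and $TT^*$.

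First I would extract pointwise and oscillatory information on the convolution kernel $K_\lambda(d(x,y)) = P_\lambda(x,y)$. Via the Harish-Chandra expansion one writes $K_\lambda(r)$ as a sum of two oscillatory pieces of the form $|c(\lambda)|^{-2} e^{\pm i \lambda r} (\sinh r)^{-(d-1)/2} a_\pm(\lambda, r)$ modulo a lower-order tail, with amplitudes $a_\pm$ uniformly controlled in $\lambda > 1$. This gives the pointwise bound $|K_\lambda(r)| \lesssim \lambda^{d-1}$ for $r \lesssim 1/\lambda$, oscillation at frequency $\lambda$, and exponential decay at large $r$ coming from $(\sinh r)^{-(d-1)/2}$ together with the factor $|c(\lambda)|^{-2} \sim \lambda^{d-1}$.

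To get the sharp endpoint estimate at $p = p_{ST}$ I would run Stein's complex interpolation on an analytic family $\{P_\lambda^z\}$ whose kernel carries an extra damping $(\sinh r)^z$ (or a comparable regularization). On $\real z = 0$ the pointwise kernel bound yields the $L^1 \to L^\infty$ estimate of size $\lambda^{d-1}$; for $\real z$ large enough the operator is bounded on $L^2$ via Plancherel in the Helgason variable. Interpolating at the appropriate intermediate value produces $\|P_\lambda\|_{L^{p'} \to L^p} \lesssim \lambda^{d-1 - 2d/p}$ for all $p \geq p_{ST}$. For the intermediate Sogge range $2 < p \leq p_{ST}$, I would decompose $P_\lambda$ dyadically in a suitable way and real-interpolate between the $L^{p_{ST}'} \to L^{p_{ST}}$ endpoint and the $L^2 \to L^2$ bound on each piece. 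The non-uniformity as $p \to 2^+$ (where $P_\lambda$ fails to be bounded on $L^2$ uniformly in $\lambda$) produces exactly the logarithmic loss $(p-2)^{-1} + 1$ after careful bookkeeping of the Marcinkiewicz constants.

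For the lower bounds, in the Stein-Tomas range I would exhibit a Knapp-type example: take $f = E_\lambda(\un_C)$ where $C$ is a cap on $\mathbb{S}^{d-1}$ of angular size $\lambda^{-1/2}$, and observe that by the uncertainty principle $f$ is essentially supported on a tubular region of $\mathbb{H}^d$ of scales $1/\lambda \times \lambda^{-1/2}$, saturating $\lambda^{d-1 - 2d/p}$. For $2 < p \leq p_{ST}$ I would use a radial example whose Helgason transform is sharply localized near $\lambda$, matching the power $\lambda^{(d-1)(1/2 - 1/p)}$; the sharp $(p-2)^{-1}$ rate is captured by letting the radial frequency window shrink as $p \to 2^+$.

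Finally, the off-duality bound is a clean corollary. Writing $P_\lambda = E_\lambda R_\lambda$ with $R_\lambda = E_\lambda^*$, the $TT^*$ identity gives $\|E_\lambda\|_{L^2(\mathbb{S}^{d-1}) \to L^q(\mathbb{H}^d)}^2 = \|P_\lambda\|_{L^{q'} \to L^q}$ and, by duality, $\|R_\lambda\|_{L^s(\mathbb{H}^d) \to L^2(\mathbb{S}^{d-1})}^2 = \|P_\lambda\|_{L^s \to L^{s'}}$. Substituting the sharp on-duality bounds at $q$ and $s$ and composing gives exactly the product expression in the statement. The main obstacle I expect is tracking the precise $p$-dependence: on the upper-bound side, following how the Marcinkiewicz constants degenerate as $p \to 2^+$ so as to recover the sharp $(p-2)^{-1}$ factor and not just a worse $\log$ factor; on the lower-bound side, designing radial test functions on $\mathbb{H}^d$ whose blow-up as $p \to 2^+$ matches this rate — this requires a quantitative hyperbolic Knapp/radial construction rather than a direct transfer of Euclidean extremizers.
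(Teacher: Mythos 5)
Your upper-bound strategy --- dyadically decomposing the Dirac measure $\delta_\Lambda(D)$, real interpolating between the $L^1 \to L^\infty$ kernel bound and the $L^2\to L^2$ Plancherel bound on each piece, then summing, with Stein complex interpolation at the $p_{ST}$ endpoint and a $TT^*$/composition argument for the off-duality estimate --- is essentially the paper's. One technical caveat: the paper's analytic family damps the dyadic pieces by $2^{ks}$, which gives clean endpoint bounds at $\mathfrak{Re}\,s = -1$ (for $L^2$) and $\mathfrak{Re}\,s = \rho$ (for $L^1\to L^\infty$); a physical-space damping by $(\sinh r)^z$ is awkward, since $\mathfrak{Re}\,z>0$ worsens the growth of the kernel at infinity and spoils the $L^2$ side. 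You will likely find the dyadic damping easier to make precise.

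There is, however, a genuine gap in the lower-bound part: you have swapped the roles of the two test families, and the mechanism you invoke for the sharp $(p-2)^{-1}$ rate is not there. Testing $E_\lambda$ on a radial function always produces a constant multiple of the spherical function $\Phi_\lambda$ (since $\widetilde f(\lambda,\omega)=\widetilde f(\lambda)$ for radial $f$), so ``shrinking the radial frequency window'' gains nothing; the best radial lower bound is $\|P_\lambda\|_{L^{p'}\to L^p}\gtrsim |c(\lambda)|^{-2}\|\Phi_\lambda\|_{L^p}^2\sim\lambda^{d-1-2d/p}$ for $p>\frac{2d}{d-1}$ and only $O(1)$ for $2<p\le \frac{2d}{d-1}$ --- it does not produce $\lambda^{(d-1)(\frac12-\frac1p)}$. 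It is the Knapp cap of angular size $\lambda^{-1/2}$ that gives the Sogge-range exponent \emph{and} the $(p-2)^{-1}$ factor, while the radial/constant test function is what saturates $\lambda^{d-1-2d/p}$ for $p>p_{ST}$ (the cap is only sharp at the single point $p=p_{ST}$). Moreover your Euclidean picture of $E_\lambda\un_C$ living in a $\lambda^{-1}\times\lambda^{-1/2}$ tube misses the feature that actually drives the constant: in Iwasawa coordinates $E_\lambda\varphi_\delta(s,v)\gtrsim\lambda^\rho\delta^{d-1}e^{\rho s}$ holds on the unbounded region $\{\,s<-\tfrac12\log(\lambda\delta^2),\ |v|\ll(\lambda\delta)^{-1}\,\}$, and the $L^p$ norm over that tail involves $\int_{-\infty}^{-\frac12\log(\lambda\delta^2)}e^{\rho(p-2)s}\,ds\sim (p-2)^{-1}(\lambda\delta^2)^{-\rho(p-2)/2}$; the borderline convergence of this integral as $p\to 2^+$ is the source of the $(p-2)^{-1}$ constant, and it has no direct Euclidean analogue.
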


Some estimates on the resolvent were also proved in general dimension in \cite{HS}. In the next theorem we obtain estimates on $(D^2-\tau - i \varepsilon)^{-1}$ for a wider range of exponents, while keeping track of the dependence of the constant on $\tau.$

\begin{theorem}[Boundedness of the resolvent] \label{thm-resolvent}
Let $z:= \tau + i \varepsilon \in \mathbb{C}$, with $\tau>1$ and $\epsilon>0$. For $2\leq p < \infty$,
\begin{align*}
\Vert (D^2-z)^{-1}  \Vert_{L^{p'} \rightarrow L^{p}} \lesssim_{p} \left\{ \begin{array}{ll}
\tau^{\frac{d}{2}(1-\frac{2}{p})-1} & \mbox{if $ p_{ST} < p \leqslant \frac{2d}{d-2} $} \\
\tau^{\frac{d-1}{2}(\frac{1}{2}-\frac{1}{p})-\frac{1}{2}} & \mbox{if $2 < p \leq p_{ST}:= \frac{2(d+1)}{d-1}$}
\end{array} \right.
\end{align*}
Note that the implicit constant does not depend on $\varepsilon,$ but may depend on $p.$

Moving away from the line of duality, it enjoys the bounds, for $1\leq s \leq q \leq \infty$, for regions I, II, III, IV defined in Figure~\ref{figure1}, and for implicit constants independent of $\epsilon$, but dependent on $s,q:$
\begin{itemize}
\item {Region $I:$} $\Vert (D^2-z)^{-1} \Vert_{L^s \rightarrow L^q} \lesssim_{ q,s} \tau^{\frac{\rho}{2}\big(\frac{1}{s}-\frac{1}{q} \big)-\frac{1}{2}},$ 
\item {Region $II:$} $\Vert (D^2-z)^{-1} \Vert_{L^s \rightarrow L^q} \lesssim_{ q,s}  \tau^{\frac{\rho}{2}\big(\frac{1}{s} - \frac{1}{q} \big) + \frac{d}{2}\big(\frac{1}{2} - \frac{1}{q} \big)-\frac{3}{4}},$
\item {Region $III:$} $\Vert (D^2-z)^{-1} \Vert_{L^s \rightarrow L^q} \lesssim_{q,s} \tau^{\frac{d}{2}\big( \frac{1}{s} - \frac{1}{q} \big)-1} .$ 
\item {Region $IV:$} $\Vert (D^2-z)^{-1} \Vert_{L^s \rightarrow L^q} \lesssim_{ q,s} \tau^{\frac{d}{2}\big(\frac{1}{s} - \frac{1}{2} \big)-\frac{3}{4}} .$
\end{itemize}
\end{theorem}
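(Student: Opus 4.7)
The natural starting point is the spectral representation
\[
(D^2 - z)^{-1} = \int_0^\infty \frac{P_\lambda}{\lambda^2 - z} \, d\lambda,
\]
which reduces everything to estimating a $\lambda$-integral of operators whose $L^{p'} \to L^p$ (resp.\ $L^s \to L^q$) norms are already controlled by Theorem~\ref{thm-projector}. Setting $\mu = \sqrt{\tau}$ and using the partial fractions decomposition
\[
\frac{1}{\lambda^2 - z} = \frac{1}{2\sqrt{z}}\Bigl(\frac{1}{\lambda - \sqrt{z}} - \frac{1}{\lambda + \sqrt{z}}\Bigr)
\]
(with the branch of $\sqrt{z}$ in the upper half-plane), the second factor is regular of size $\lesssim 1/\mu$, while the first carries the singularity at $\lambda = \mu$.

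I would split the $\lambda$-integral as non-resonant plus resonant, using a smooth cutoff $\chi(\lambda/\mu)$ supported on $\lambda \sim \mu$. On the non-resonant piece ($\lambda \lesssim \mu/2$ or $\lambda \gtrsim 2\mu$), the triangle inequality applies: $|\lambda^2 - \tau|^{-1}$ is of order $\lambda^{-2} + \mu^{-2}$ away from $\mu$, and a dyadic summation of this factor against the bounds in Theorem~\ref{thm-projector} produces precisely the claimed exponent of $\tau$ in each of the stated ranges. The very low-frequency regime $\lambda \lesssim 1$ is relevant only when $\tau$ is close to $1$ and is handled separately using the Kunze-Stein inequality, contributing a uniformly bounded term.

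The resonant contribution, which carries the main singularity, is treated by the $TT^*$ factorization $P_\lambda = E_\lambda R_\lambda$: one rewrites the resonant piece as $T_z T_z^*$, where
\[
T_z g = \int \chi(\lambda/\mu)\, \frac{E_\lambda g(\lambda, \cdot)}{\lambda - \sqrt{z}} \, d\lambda,
\]
acting on functions on a short $\lambda$-interval times $\mathbb{S}^{d-1}$. Uniformly in $\varepsilon > 0$, the Cauchy kernel $(\lambda - \sqrt{z})^{-1}$ behaves as a Hilbert-transform-type operator in the $\lambda$-variable, so its $L^2_\lambda \to L^2_\lambda$ boundedness combined with the endpoint bounds for $E_\lambda$ and $R_\lambda$ from Theorem~\ref{thm-projector} yields the desired operator norm. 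The two regimes ($p_{ST} < p \leq 2d/(d-2)$ and $2 < p \leq p_{ST}$) on the duality line correspond directly to the two cases in Theorem~\ref{thm-projector}, and a short computation recovers $\tau^{\frac{d}{2}(1-2/p)-1}$ and $\tau^{\frac{d-1}{2}(1/2 - 1/p) - 1/2}$ respectively, the extra factor of $\tau^{-1/2}$ coming from the $1/(2\sqrt{z})$ prefactor. For the off-duality estimates, the four Regions I--IV correspond to the four combinations of the two available bounds (Kenig-Ruiz-Sogge-like versus Stein-Tomas-like) on the source and target sides in the factorized bound for $P_\lambda$ from Theorem~\ref{thm-projector}; the resulting exponents of $\tau$ follow by straightforward bookkeeping.

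The main obstacle will be controlling the Cauchy-type singular integral in $\lambda$ uniformly in $\varepsilon$ while preserving the sharp $\tau$-dependence. The cleanest route is Stein's complex interpolation applied to an analytic family built from $(D^2 - z)^{-s}$, with endpoints controlled by Theorem~\ref{thm-projector}; this bypasses a direct Hilbert-transform estimate in $\lambda$ and delivers $\varepsilon$-independent constants automatically. Matching this high-frequency analysis with the Kunze-Stein treatment of low frequencies along the transition scale $\lambda \sim 1$ is routine but must be handled with a smooth partition of unity to avoid logarithmic losses.
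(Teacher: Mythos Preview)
Your treatment of the resonant piece has a real gap. The operator $T_z T_z^*$ with $T_z g = \int \chi(\lambda/\mu)\,(\lambda-\sqrt{z})^{-1} E_\lambda g(\lambda,\cdot)\,d\lambda$ is
\[
T_z T_z^* f \;=\; \int \frac{\chi(\lambda/\mu)^2}{|\lambda-\sqrt{z}|^2}\, P_\lambda f\,d\lambda,
\]
which is \emph{not} the resonant part of the resolvent: the weight is $|\lambda-\sqrt{z}|^{-2}$ rather than $(\lambda^2-z)^{-1}$. This operator behaves like $(\mu/\varepsilon)\,P_\mu$ and blows up as $\varepsilon\to 0$, so no $\varepsilon$-uniform bound can come out of it. The resolvent is not self-adjoint and no $TT^*$ factorization is available for it. Your sentence about the Cauchy kernel being Hilbert-transform-bounded on $L^2_\lambda$ is true, but you never explain how to get from $L^2_\lambda L^2_\omega$ back to $L^p_x$ after applying it; the pointwise-in-$\lambda$ bounds on $E_\lambda$ from Theorem~\ref{thm-projector} do not suffice for that step without an additional square-function or spectral-window argument, which is absent from your sketch. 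Your fallback of Stein interpolation on $(D^2-z)^{-s}$ would require fresh $L^1\to L^\infty$ kernel bounds at large $\operatorname{Re}s$, not Theorem~\ref{thm-projector}.

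The paper's route is different and more direct. It splits the multiplier into real and imaginary parts. The imaginary part $\varepsilon/((\lambda^2-\tau)^2+\varepsilon^2)$ is an approximate delta at $\sqrt{\tau}$ and is bounded by straight integration of Theorem~\ref{thm-projector}, essentially your non-resonant idea. The real part $(\lambda^2-\tau)/((\lambda^2-\tau)^2+\varepsilon^2)$ carries the principal-value singularity; near $\lambda=\sqrt{\tau}$ it is approximated by $\tfrac{1}{2\sqrt{\tau}}(\lambda-\sqrt{\tau})^{-1}$, and the P.V.\ is then dyadically telescoped using a cutoff $\chi$ chosen so that $x\mapsto (\chi(x)-\chi(2x))/x$ has annular Fourier support. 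Each dyadic piece is, up to normalization, exactly one of the building blocks $\mathcal K_{\Lambda,k}$ from the spectral-projector proof, whose kernels are already controlled pointwise by Lemma~\ref{busard}; interpolating with Plancherel and summing in $k$ gives the duality-line bound. For the non-resonant high-frequency piece the paper does not use the triangle inequality with Theorem~\ref{thm-projector} (which would diverge at the endpoint $p=2d/(d-2)$) but instead proves a pointwise kernel bound and applies Hardy--Littlewood--Sobolev for $r\leq 1$ and Kunze--Stein for $r>1$. The off-duality estimates are obtained, \`a la Guti\'errez, by interpolating an $L^2$ bound on the dyadic pieces (Plancherel together with the restriction estimate $\|R_\lambda\|_{L^{p'}\to L^2}$) against the same $L^1\to L^\infty$ kernel bound from Lemma~\ref{busard}; the four regions arise from the case split in that interpolation and in the subsequent $k$-sum, not from combining source- and target-side bounds on $P_\lambda$ as you suggest.
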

\begin{figure}
\centering
\begin{tikzpicture}[scale=0.8]
\draw[->] (-5,0) -- (5.3,0) ;
\draw[->] (-5,0) -- (-5,10.3) ;
\draw (4.7,-0.3) node{$1/s$} ;
\draw (-5.6,10) node{$1/q$} ; 
\draw (5,0) -- (5,10) ;
\draw (-5,10) -- (5,10); 
\draw[red,dashed] (5,0)--(1.66,3.33) ;
\draw[red,dashed] (0,5)--(-5,10) ;
\draw[blue,dashed] (0,5)--(5,5) ;
\draw[blue,dashed] (0,5)--(0,0) ;
\draw[green,dashed] (5,6.66)--(-1.66,0) ;
\draw (-5,4.16) node{$\bullet$} ;
\draw (-5.4,4.16) node{$\frac{d-1}{2d}$} ;
\draw[yellow,dashed] (-5,4.16)--(0,4.16) ;
\draw[yellow,dashed] (0.84,4.16)--(5,4.16) ;
\draw[purple,dashed] (5,0)--(0,7) ;
\draw[purple,dashed] (5,0)--(-2,5);
\draw[yellow,dashed] (0.84,0)--(0.84,4.16);
\draw[yellow,dashed] (0.84,5)--(0.84,7);
\draw (0,5) node{$\circ$};
\draw (0.5,4.5) node[scale=0.5]{$I$};
\draw (2.5,4.7) node[scale=0.5]{$IV$};
\draw (2.2,4) node[scale=0.5]{$III$};
\draw (1.5,3.6) node[scale=0.5]{$II$};
\draw (0.95,2.7) node[scale=0.5]{$III$};
\draw (0.3,2.6) node[scale=0.5]{$IV$};
\end{tikzpicture}
\caption{\label{figure1}
Boundedness of $(D^2-\tau-i \varepsilon)^{-1}$. The equations of the lines in this figure are as follows (using the convention that a line and its symmetric with respect to the diagonal $1/q+1/s=1$ have the same color). 
Yellow line: $\frac{1}{q} = \frac{d-1}{2d}$;  Green line: $\frac{1}{q} - \frac{1}{s} = \frac{2}{d}$; Purple line: $\frac{d-1}{d+1} \frac{1}{q} + \frac{1}{s} = 1. $ This figure corresponds to the case $d \geq 3$, lines are arranged slightly differently for $d=2$.}
\end{figure}

Finally, we give applications of these estimates in Section \ref{applications} to smoothing estimates for the Schr\"odinger equation, and to resolvent estimates for electromagnetic perturbations of the Laplacian. To the authors' knowledge these are new results, since prior work focused on weighted $L^2$ spaces \cite{K}. 

\subsection{Obtained results: the Fourier extension problem}
Since $P_\lambda = E_\lambda R_\lambda = E_\lambda (E_\lambda)^*$, the classical $TT^*$ argument combined with Theorem~\ref{thm-projector} gives sharp estimates for the $L^2 \to L^p$ operator norm of $E_\lambda$. It also immediately implies that the $L^p \to L^q$ operator norm of $E_\lambda$ is finite for any $p \geq 2$ and $q>2$. This observation leads to asking what the operator norm of $E_\lambda$ is in that range; and whether $E_\lambda$ might be bounded outside of it. The following proposition provides part of the answer.

\begin{proposition}[Lower bounds for the operator norm of the Fourier extension operator]
\label{proplb}
Recall that $E_\lambda$ is defined in~\eqref{defextension}.
\begin{itemize}
\item[(i)] The operator $E_\lambda$ is not bounded from $L^p$ to $L^q$ if $p<2$ or $q \leq 2$.
\item [(ii)] If $p \geq 2$ and $q>2$, the operator norm of $E_\lambda$ from $L^p \to L^q$ satisfies
$$
\| E_\lambda \|_{L^p \to L^q} \gtrsim  (q-2)^{-1/2} \lambda^{\frac{\rho}{p} - \frac{\rho}{q}} + \lambda^{\rho - \frac{d}{q}}.
$$
\end{itemize}
\end{proposition}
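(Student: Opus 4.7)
The plan is to test $E_\lambda$ on two families of functions and to read off the lower bounds by direct computation: (a) the constant function $f\equiv 1$ on $\mathbb{S}^{d-1}$, for which $E_\lambda 1$ is a constant multiple of the spherical function $\phi_\lambda$ on $\mathbb{H}^d$, and (b) the indicator $f = \mathbf{1}_{C_\delta}$ of a spherical cap of angular radius $\delta$ about a pole $\omega_0\in\mathbb{S}^{d-1}$.

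For Part (i), the Harish--Chandra asymptotic gives $|\phi_\lambda(r)|\gtrsim |c(\lambda)|e^{-\rho r}$ on a set of positive density at infinity, so $\|E_\lambda 1\|_{L^q}^q \gtrsim \int_1^\infty e^{(2-q)\rho r}\,dr = \infty$ whenever $q\leq 2$, while $\|1\|_{L^p}\sim 1$. For $p<2$ (with $q>2$, the only remaining case) I use example (b) with $\delta\to 0^+$: inside the Knapp box, of hyperbolic volume $\sim \lambda^{-\rho}$ and determined by the condition that the phase of $[x,(1,\omega)]^{-i\lambda-\rho}$ varies by only $O(1)$ as $\omega$ ranges over $C_\delta$, one has $|E_\lambda f|\gtrsim \lambda^\rho\delta^{d-1}e^{\rho r}$, so that integrating yields
$$
\frac{\|E_\lambda f\|_{L^q}}{\|f\|_{L^p}} \gtrsim \lambda^{(q-2)\rho/(2q)}\delta^{\rho(1-2/p)},
$$
which blows up as $\delta\to 0^+$ precisely when $p<2$.

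For Part (ii), the term $\lambda^{\rho-d/q}$ again follows from (a): since $|E_\lambda 1(0)| = |c(\lambda)|^{-1}\omega_{d-1}^{-1}\sim\lambda^\rho$ and the oscillation scale of $h_{\lambda,\omega}$ at the origin is $1/\lambda$, $|E_\lambda 1|\gtrsim \lambda^\rho$ on a hyperbolic ball of volume $\sim\lambda^{-d}$, giving $\|E_\lambda 1\|_{L^q}\gtrsim \lambda^{\rho-d/q}$ (and $\|1\|_{L^p}\sim 1$). For the term $(q-2)^{-1/2}\lambda^{\rho/p-\rho/q}$ I take (b) with $\delta=\lambda^{-1/2}$, so $\|f\|_{L^p}\sim\lambda^{-\rho/p}$, and perform a stationary-phase analysis of
$$
E_\lambda f(x) = |c(\lambda)|^{-1}\omega_{d-1}^{-1}\int_C[x,(1,\omega)]^{-i\lambda-\rho}\,d\omega
$$
for $x$ pointing in a direction $u$ close to $\omega_0$. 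Writing $\omega = u+\eta$ in local Euclidean coordinates on $T_u\mathbb{S}^{d-1}$, a short computation gives $[x,(1,\omega)] = e^{-r}+(\sinh r/2)|\eta|^2+O(|\eta|^4)$, so the phase $-\lambda\log[x,(1,\omega)]$ has a unique nondegenerate critical point at $\eta=0$, with Hessian of order $\lambda e^{2r}$ and amplitude $e^{\rho r}$; stationary phase then yields $|E_\lambda f(x)|\sim e^{-\rho r}$ throughout the ``Knapp tube'' $\{|u-\omega_0|\lesssim \lambda^{-1/2},\ r\geq 0\}$, whose angular cross-section has measure $\sim\lambda^{-\rho}$. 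Integrating,
$$
\|E_\lambda f\|_{L^q}^q \gtrsim \lambda^{-\rho}\int_1^\infty e^{(2-q)\rho r}\,dr \sim \frac{\lambda^{-\rho}}{q-2},
$$
so $\|E_\lambda f\|_{L^q}/\|f\|_{L^p}\gtrsim (q-2)^{-1/q}\lambda^{\rho/p-\rho/q}$; since $(q-2)^{-1/q}$ and $(q-2)^{-1/2}$ are comparable on any bounded range of $q$ (and for $q$ large the contribution $\lambda^{\rho-d/q}$ already dominates $(q-2)^{-1/2}\lambda^{\rho/p-\rho/q}$), the claimed bound follows. The main obstacle is the stationary-phase step on the curved space $\mathbb{H}^d$, requiring verification that boundary contributions from $\partial C$ do not cancel the main term and that the Hessian bound is uniform as $u$ ranges over the Knapp tube.
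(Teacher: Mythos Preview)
Your plan is correct and reaches the same conclusions as the paper, but the implementation differs in two substantive places.

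For the unboundedness when $p<2$, the paper does \emph{not} send $\delta\to 0$ in the Knapp example; instead it uses the action of Lorentz boosts $U=a_t$ on the Fourier side, via the identity $(E_\lambda g)\circ U=E_\lambda\big([U^{-1}\mathbf{0},b(\omega)]^{i\lambda-\rho}\,g\circ U\big)$ together with the Jacobian $d\omega=[U^{-1}\mathbf{0},b(\omega)]^{2\rho}\,d\omega'$, from which $\|[U^{-1}\mathbf{0},b(\omega)]^{-\rho}g\circ U\|_{L^p}\to 0$ as $t\to\infty$ precisely when $p<2$, while the numerator is unchanged. Your argument via $\varphi_\delta$ with $\delta\to 0$ is more direct and equally valid (Lemma~\ref{lemmaknapp} already gives $\|E_\lambda\varphi_\delta\|_{L^q}/\|\varphi_\delta\|_{L^p}\gtrsim\delta^{\rho(1-2/p)}$); the boost argument has the conceptual virtue of isolating the role of the symmetry group $SO(d,1)$.

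For the Knapp lower bound in (ii), the paper avoids stationary phase entirely. Working in Iwasawa coordinates $(s,v)$ it identifies the \emph{non-oscillatory} region $\{s<-\tfrac12\log(\lambda\delta^2),\ |v|\ll(\lambda\delta)^{-1}\}$, on which every $h_{\lambda,\omega}$ with $\omega\in C_\delta$ is close to $h_{\lambda,\omega_{NP}}=e^{(\rho-i\lambda)s}$, so that $|E_\lambda\varphi_\delta|\gtrsim \lambda^\rho\delta^{d-1}e^{\rho s}$ holds by size alone (this is Lemma~\ref{lemmaknapp}). Geometrically this region points \emph{away} from $\omega_0$, whereas your stationary-phase tube points \emph{toward} $\omega_0$; both yield the decay $e^{-\rho r}$ along a tube of angular width $\lambda^{-1/2}$ and hence the same $L^q$ bound. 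The advantage of the paper's route is that it eliminates exactly the obstacle you flag: there are no boundary contributions or Hessian-uniformity issues to verify. (A small slip: in your Part~(i) Knapp sketch the pointwise bound should read $e^{\rho s}$, equivalently $e^{-\rho r}$, not $e^{\rho r}$.)
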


This proposition is proved by adapting the Knapp and radial examples to the hyperbolic setting, and by studying the effect of isometries of the hyperbolic space on the Fourier transform.
It suggests the conjecture
$$
\boxed{\mbox{if $p \geq 2, q>2$}, \quad \| E_\lambda \|_{L^p \to L^q} \lesssim  (q-2)^{-1/2} \lambda^{\frac{\rho}{p} - \frac{\rho}{q}} + \lambda^{\rho-\frac{d}{q}}}
$$
which can also be formulated as
$$
\| E_\lambda \|_{L^p \to L^q} \lesssim_{q}
\left\{
\begin{array}{ll}
\lambda^{\frac{\rho}{p} - \frac{\rho}{q}} & \mbox{if $p\geq 2$, $q > 2$, $\displaystyle \frac{d+1}{d-1} \frac 1 q \geq 1 - \frac{1}{p}$} \\
\lambda^{\rho- \frac d q} & \mbox{if $p \geq 2$, $q > 2$, $\displaystyle \frac{d+1}{d-1} \frac 1 q \leq 1 - \frac{1}{p}$} 
\end{array}
\right. .
$$
\\
\\
\textbf{Acknowledgements.} The authors would like to thank Jean-Philippe Anker for correcting a mistake in the complex interpolation argument in an earlier version of the paper. 
They would also like to thank Alexandru Ionescu for very helpful discussions, as well as the anonymous referee for their careful reading of the paper which greatly improved its readability. We are also grateful they pointed out important references.

P. Germain is supported by the NSF grant DMS-1501019, by the Simons collaborative grant on weak
turbulence, and by the Center for Stability, Instability and Turbulence (NYUAD).

T. L\'{e}ger is supported by the Simons collaborative grant on weak turbulence.

\section{Harmonic analysis on the hyperbolic space}
\label{HAHS}
In this section, we recall basic facts about the space $\mathbb{H}^d$ as well as its Fourier theory. They can be found in classical references such as the textbook by Helgason~\cite{Helgason}, the review by Bray~\cite{Bray}, and the nice and concise presentation in Ionescu-Staffilani~\cite{IS}.

\subsection{Analysis on $\mathbb{H}^d$}
\subsubsection{Hyperboloid model}
The Minkowski metric on $\mathbb{R}^{d+1}$ is given by
$$
[x,y] = x^0 y^0 - x^1 y^1 - \dots - x^d y^d \qquad \mbox{if $x,y \in \mathbb{R}^{d+1}$}.
$$
We define $\mathbb{H}^d$ as the hyperboloid (or to be more precise, the upper sheet of the hyperboloid)
$$
\mathbb{H}^d = \{ x\in \mathbb{R}^{d+1}, \; [x,x] = 1, \; x^0>0 \}
$$
and equip this space with the Riemannian metric induced by the Minkowski metric. This Riemannian metric induces in turn a measure, which will be denoted $dx$. We will further distinguish the point $\mathbf{0}=(1,0,\dots,0)$.

The group of isometries of the Minkowski space leaving $\mathbb{H}^d$ invariant is $SO(d,1)$, which we denote $\mathbb{G}$; it naturally acts on $\mathbb{H}^d$. The isotropy group of $\mathbf{0}$ is naturally identified with isometries of the Euclidean space $SO(d)$, which we denote $\mathbb{K}$.

Normalizing the Haar measure on $\mathbb{G}$ so that $\displaystyle \int_{\mathbb{G}} f (g \cdot \mathbf 0) \,dg = \int_{\mathbb{H}^d} f(x)\,dx$, we can define convolution on $\mathbb{H}^d$ through
$$
f * K (x) = \int_{\mathbb{G}} f(g \cdot \mathbf 0) K(g^{-1} \cdot x) \,dg.
$$
Note that in the case where $K$ is radial, we have
\begin{align} \label{radial-conv}
f * K (x) = \int_{\mathbb{H}^d} f(x') K(d(x,x')) dx' ,
\end{align}
where $d(x,x')$ denotes the geodesic distance between $x$ and $x'.$ \\
We will also rely on a non-Euclidean feature of $\mathbb{H}^d$ to deal with low frequencies, namely the Kunze-Stein phenomenon:
\begin{lemma}[\cite{APV}, Lemma 5.1] \label{K-S}
For every radial measurable function $\kappa$ on $\mathbb{H}^d$, every $2 \leqslant q, \widetilde{q} < \infty$ and $f \in L^{\widetilde{q}'}(\mathbb{H}^d),$
\begin{align*}
\Vert f * \kappa \Vert_{L^q} \lesssim_{q} \Vert f \Vert_{L^{\widetilde{q}'}}  \bigg( \int_0 ^{\infty}  (\sinh r)^{d-1} \vert \kappa (r) \vert^{Q} (1+r)^{\mu} e^{-\rho \mu r} dr \bigg)^{1/Q},
\end{align*}
where 
\begin{align*}
\mu = \frac{2 \min \lbrace q , \widetilde{q} \rbrace}{q+\widetilde{q}}, \ Q = \frac{q \widetilde{q}}{q+\widetilde{q}}.
\end{align*}
\end{lemma}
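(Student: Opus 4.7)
Since the lemma is cited from \cite{APV}, Lemma 5.1, no proof is needed in this paper, but the plan to reproduce it would follow the standard route for sharp Kunze-Stein inequalities on real-rank-one symmetric spaces: Stein-type complex interpolation between endpoint estimates on convolution operators.

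The principal endpoint is $q = \widetilde{q} = 2$ (so $\mu = Q = 1$). Here I would exploit the Gelfand pair structure of $(SO(d,1), SO(d))$: since $\kappa$ is radial, convolution with $\kappa$ acts as a spherical Fourier multiplier with symbol $\widehat{\kappa}(\lambda) = \int_0^\infty (\sinh r)^{d-1} \kappa(r) \phi_\lambda(r) \, dr$, where $\phi_\lambda$ is the elementary spherical function. By Plancherel the $L^2 \to L^2$ operator norm equals $\Vert \widehat{\kappa} \Vert_{L^\infty}$; using $|\phi_\lambda| \leq \phi_0$ for real $\lambda$ together with the Harish-Chandra asymptotics $\phi_0(r) \asymp (1+r) e^{-\rho r}$, one recovers exactly the weight $(1+r) e^{-\rho r}$ with exponent $1$ as required. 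This is the main analytic input.

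The complementary endpoints correspond to the limits $\widetilde{q} \to \infty$ with $q$ fixed (or symmetrically), where $\mu \to 0$ and $Q \to \widetilde{q}$. Here the bound reduces to a direct H\"older estimate applied to the radial convolution representation \eqref{radial-conv}: $|f * \kappa(x)| \leq \Vert f \Vert_{L^{\widetilde{q}'}} \Vert \kappa \Vert_{L^{\widetilde{q}}(\mathbb{H}^d)}$, producing the integral $\int (\sinh r)^{d-1} |\kappa(r)|^{\widetilde{q}} \, dr$ with no exponential weight, in agreement with the vanishing of $\mu$.

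The full statement is then obtained by complex interpolation \`a la Stein, applied to an analytic family of radial kernels $\kappa_z$ built from the spherical Fourier transform, interpolating between the endpoints above; the mixed exponents $\mu$ and $Q$ arise as linear interpolates of the endpoint values. The main technical obstacle, and what makes the lemma delicate, is control of this analytic family along the interpolation strip: one needs sharp pointwise estimates on $\phi_\lambda$ and on the Harish-Chandra $c$-function for complex values of $\lambda$, in order to apply the three-lines lemma with the correct dependence in $q$ and $\widetilde{q}$. This is the analytic core of the Cowling-Herz framework, further refined by Ionescu \cite{I1} and Anker-Pierfelice-Vallarino \cite{APV}.
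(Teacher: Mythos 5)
You correctly observe that the paper gives no proof of this lemma and simply cites \cite{APV}; no more is required here, and your reconstruction sketch captures the standard architecture. Two small corrections to that sketch are worth recording. First, in the degenerate endpoint $\widetilde{q}\to\infty$ with $q$ fixed one has $Q\to q$, not $Q\to\widetilde{q}$; the limit $Q\to\widetilde{q}$ (with $\mu\to 0$) corresponds to $q\to\infty$, which is in fact the regime your H\"older bound $\vert f*\kappa(x)\vert\le\Vert f\Vert_{L^{\widetilde{q}'}}\Vert\kappa\Vert_{L^{\widetilde{q}}(\mathbb{H}^d)}$ realizes, so your two statements are internally inconsistent even though the intended picture is right. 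Second, you are right that a fixed-kernel Riesz--Thorin interpolation is insufficient (it yields only $M_0^{1-\theta}M_1^{\theta}$, strictly weaker than the stated $L^Q$-weighted norm of $\kappa$ by the power-mean inequality), so one must use a Stein-type analytic family $\kappa_z$; but the delicate point is not pointwise control of $\phi_\lambda$ or the Harish-Chandra $c$-function at complex $\lambda$ -- both boundary verifications use only real $\lambda$, via $\vert\phi_\lambda\vert\le\phi_0\asymp(1+r)e^{-\rho r}$ on the $(2,2)$ edge and a plain $L^\infty$ bound on the other. The actual work is in choosing $\kappa_z$ (typically by raising $\vert\kappa\vert$ and the weight $(1+r)e^{-\rho r}$ to affine powers of $z$) and normalizing it so the boundary operator norms are uniformly $O(1)$ on the strip.
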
 

\subsubsection{Fourier Analysis} For $\omega \in \mathbb{S}^{d-1}$, let
$$
b(\omega) = (1,\omega) \in \mathbb{R}^{d+1}.
$$
The analog of plane waves is provided by
$$
h_{\lambda, \omega}(x) = [x \,,\, b(\omega)]^{i \lambda - \rho}, \qquad x \in \mathbb{H}^d
$$
(notice that $[x,b(\omega)] > 0$ for $x \in \mathbb{H}^d$). They satisfy
$$
\Delta_{\mathbb{H}^d} h_{\lambda, \omega}(x) = - (\lambda^2 + \rho^2) h_{\lambda, \omega}(x).
$$
The Helgason Fourier transform on $\mathbb{H}^d$ is defined as
$$
[\widetilde{\mathcal{F}} f] (\lambda, \omega) = \widetilde{f}(\lambda,\omega) = \int_{\mathbb{H}^d} f(x) h_{\lambda,\omega}(x) \,dx, \qquad (\lambda,\omega) \in \mathbb{R}_+ \times \mathbb{S}^{d-1}.
$$
The inverse Fourier transform is then given by 
$$
f(x) = \int_0^\infty \frac{1}{\omega_{d-1}} \int_{\mathbb{S}^{d-1}} \widetilde{f}(\lambda,\omega) \overline{h_{\lambda,\omega}(x)} |c(\lambda)|^{-2} \,d\lambda \,d\omega,
$$
for the Harish-Chandra function
$$
\textbf{c} (\lambda) = \frac{2^{2\rho-1} \Gamma(\rho+\frac{1}{2})}{\pi^{1/2}} \frac{\Gamma(i\lambda)}{\Gamma(\rho + i\lambda)}
$$
whose asymptotics are as follows \cite{T} 
\begin{align*}
\textbf{c}(\lambda)^{-1} &= \frac{2^{2\rho-1} \Gamma(\rho+\frac{1}{2})}{\pi^{1/2}} (i\lambda)^\rho + O(\lambda^{\rho -1}).
\end{align*}
We can deduce from the Fourier transform formula the expression of the spectral projectors for the Laplace-Beltrami operator ${\Delta}_{\mathbb{H}^d}$:
$$
- \Delta_{\mathbb{H}^d} - \rho^2 = \int \lambda^2 P_\lambda \,d\lambda, \quad \mbox{with} \quad P_\lambda f (x) = \frac{1}{\omega_{d-1}} \int_{\mathbb{S}^{d-1}} \widetilde{f}(\lambda,\omega) \overline{h_{\lambda,\omega}(x)} |c(\lambda)|^{-2}  \,d\omega.
$$
Writing $D = \sqrt{-{\Delta}_{\mathbb{H}^d} - \rho^2}$, radial Fourier multipliers are defined as follows:
$$
m(D) f = \widetilde{\mathcal{F}}^{-1} \left[ m(\lambda) [ \widetilde{\mathcal{F}} f] (\lambda,\omega) \right] = \int m(\lambda) P_\lambda f \,d\lambda.
$$

Finally, the analog of Plancherel's theorem holds: the Fourier transform is an isometry from $L^2(\mathbb{H}^d,dx)$ to $L^2(\mathbb{R}_+ \times \mathbb{S}^{d-1}, | \textbf{c} (\lambda)|^{-2}\,d\lambda \,\frac{d\omega}{\omega_{d-1}} )$.

\subsection{Coordinate systems on $\mathbb{H}^d$}
We will use two coordinate systems on $\mathbb{H}^d$, which we now define.

\subsubsection{Polar coordinates} \label{polar}
In this hyperboloid model for $\mathbb{H}^d$, we can adopt polar coordinates 
$$
x=(\cosh r, \, \sinh r \, \omega), \qquad (r,\omega) \in \mathbb{R}_+ \times \mathbb{S}^{d-1},
$$
where $r \geq 0$ is the geodesic distance to the origin $\mathbf{0}$. In these coordinates, the volume element becomes
$$
dx = (\sinh r)^{2\rho} \,dr \, d\omega.
$$
The spherical function is given by
$$
\Phi_\lambda(x) = \frac{1}{\omega_{d-1}} \int_{\mathbb{S}^{d-1}} h_{\lambda,\omega}(x)\,d\omega.
$$
It only depends on the distance $r$ of $x$ to the origin, and can be written
\begin{equation}
\label{sphericalfunction}
\Phi_\lambda(r) = \frac{2^{\rho-1} \Gamma(\rho + \frac{1}{2})}{\sqrt{\pi} \Gamma(\rho) (\sinh r)^{2\rho -1}}\int_{-r}^r e^{i\lambda s} ( \cosh r - \cosh s)^{\rho-1} \,ds.
\end{equation}

Radial functions on $\mathbb{H}^d$ are invariant by $\mathbb{K}$; in other words, they only depend on $r$. Therefore,
$$
\widetilde{f}(\lambda, \omega) = \widetilde{f}(\lambda) = \int_{\mathbb{H}^d} f(x) \Phi_{\lambda}(x)\,dx = \omega_{d-1} \int_0^\infty f(r) \Phi_{\lambda}(r) (\sinh r)^{2 \rho} \,dr.
$$

It is not the case in general that the Fourier transform on $\mathbb{H}^d$ exchanges multiplication and convolution; but it is true for radial functions. Namely, if $K$ is radial, then
$$
\widetilde{f * K}(\lambda,\omega) = \widetilde{f}(\lambda,\omega) \widetilde{K}(\lambda).
$$
The convolution kernel $K = \widetilde{ \mathcal{F}}^{-1} m(\lambda)$ associated to the even radial multiplier $m$ is given by the following formulas (see~\cite{Taylor0}, Chapter 8, Section 5):
\begin{itemize}
\item If $d$ is odd, 
\begin{equation}
\label{albatros1}
\displaystyle K(r) = \frac{1}{\sqrt{2\pi}} \left( \frac{-1}{2\pi} \frac{1}{\sinh r} \frac{\partial}{\partial r} \right)^\rho \widehat{m}(r).
\end{equation}
\item If $d$ is even,
\begin{equation}
\label{albatros2} 
\displaystyle K(r)  = \frac{1}{{\sqrt \pi}} \int_r^\infty \left( \frac{-1}{2\pi} \frac{1}{\sinh s} \frac{\partial}{\partial s} \right)^{d/2} \widehat{m}(s) (\cosh s - \cosh r)^{-1/2} \sinh s \,ds.
\end{equation}
\end{itemize}
Here $\widehat{m}$ denotes the flat Fourier transform of $m.$ 

These formulas are related to the Abel transform on hyperbolic spaces, first considered by Flensted-Jensen and Koornwinder, see \cite{Ko}.
We also record an easy technical lemma that will be used repeatedly in the rest of the paper. 
\begin{lemma}
We have the identity
\begin{align} \label{decompmder}
\bigg(\frac{-1}{2 \pi} \frac{1}{\sinh r} \frac{\partial}{\partial r} \bigg)^{M} \widehat{m}(r) = \sum_{l=0}^{M} F_{l,M}(r) \partial_r ^l \widehat{m} ,
\end{align}
where the functions $F_l$ are smooth and satisfy
\begin{align} \label{boundFl}
\bigg \vert \frac{d^{\alpha}}{dr^{\alpha}}  F_{l,M} \bigg \vert \lesssim (\sinh r)^{-2M + l - \alpha} e^{r(M-l+\alpha)},
\end{align}
where $0 \leqslant \alpha \leqslant M.$ 
\end{lemma}
\begin{proof}
Straightforward by induction on $M.$
\end{proof}

We will need asymptotics for the spherical function:
\begin{proposition} \label{spherical-asympt} 
Assume $\lambda > 1$.
\begin{itemize}
\item If $r < \frac{1}{\lambda}$, $\displaystyle \Phi_\lambda(r) = O(1)$ uniformly on $\lambda$. 
\item If $ \frac{1}{\lambda} < r < 1$, $\displaystyle 
\Phi_\lambda(r) = 2^\rho \Gamma(\rho + \frac{1}{2}) \frac{\cos ( |\lambda| r - \frac{\rho \pi}{2} ) }{\lambda^{\rho} (\sinh r)^\rho} + O\left( \frac{1}{\lambda^{\rho+1} (\sinh r)^{\rho} r}  \right).$
\item If $r > 1$, $\displaystyle 
\Phi_\lambda(r) = 2^\rho \Gamma(\rho + \frac{1}{2}) \frac{\cos ( |\lambda| r - \frac{\rho \pi}{2} ) }{\lambda^{\rho} (\sinh r)^\rho}+ O\left( \frac{1}{\lambda^{\rho+1} (\sinh r)^\rho} \right).$
\end{itemize}
\end{proposition}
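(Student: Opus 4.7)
The plan is to analyze the oscillatory integral
\begin{equation*}
J(\lambda, r) := \int_{-r}^r e^{i\lambda s}(\cosh r - \cosh s)^{\rho - 1}\,ds
\end{equation*}
appearing in \eqref{sphericalfunction}, since $\Phi_\lambda(r)$ is a constant multiple of $J(\lambda,r)/(\sinh r)^{2\rho - 1}$. All three bounds will be derived from this single representation.

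For $r < 1/\lambda$, the oscillation is ineffective, and I would bound $J$ trivially using $|e^{i\lambda s}| = 1$. Combining the factorization $\cosh r - \cosh s = 2\sinh\!\frac{r+s}{2}\sinh\!\frac{r-s}{2}$ with the elementary estimate $\sinh t \sim t$ for $t \leq 1$ (which applies since $r < 1/\lambda \leq 1$), a direct computation gives $\int_{-r}^r(\cosh r - \cosh s)^{\rho-1}\,ds \sim r^{2\rho - 1}$. Dividing by $(\sinh r)^{2\rho - 1} \sim r^{2\rho - 1}$ yields the desired bound $\Phi_\lambda(r) = O(1)$.

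For $r > 1/\lambda$, the phase $s \mapsto \lambda s$ has no interior critical point on $(-r, r)$, so the main contribution to $J$ comes from the two endpoints $s = \pm r$, where the amplitude has power-type behavior. I would substitute $u = r - s$ near $s = r$ (and symmetrically $u = r + s$ near $s = -r$) and use the factorization above to extract $(\sinh r)^{\rho - 1}$, reducing each endpoint contribution to an integral of the form
\begin{equation*}
e^{\pm i\lambda r}\int_0^{2r}e^{\mp i\lambda u}\, a(r,u)\, u^{\rho - 1}\,du,
\end{equation*}
where $a(r, u)$ is smooth in $u$ with $a(r, 0) = 1$. The leading-order contribution is then extracted using the Mellin identity
\begin{equation*}
\int_0^\infty e^{- i\lambda u} u^{\rho - 1}\,du = \Gamma(\rho)\lambda^{-\rho}e^{-i\pi\rho/2},
\end{equation*}
and summing the two symmetric endpoint pieces produces exactly the cosine appearing in the target expression.

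Two error sources remain. Replacing $a(r,u)$ by $1$ produces a remainder which vanishes at $u=0$ with derivative controlled by $\cosh r / \sinh r$; one integration by parts bounds its contribution to $J/(\sinh r)^{\rho - 1}$ by $\lambda^{-\rho - 1}\, \cosh r/\sinh r$. Truncating the $u$-integral at $u = 2r$ costs a tail of size $\lambda^{-1}\, r^{\rho - 1}$, again via integration by parts. After dividing by $(\sinh r)^{2\rho - 1}$, the first error yields $(\lambda^{\rho + 1}(\sinh r)^\rho)^{-1}$, matching the regime $r > 1$, while the second yields the additional factor $1/r$ that appears in the regime $1/\lambda < r < 1$ (where $\sinh r \sim r$). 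The main obstacle is propagating these asymptotic expansions uniformly in both $\lambda$ and $r$: one must handle the transition $r \sim 1$ carefully and verify that the two error sources combine into precisely the forms stated in each regime. This bookkeeping, while somewhat tedious, introduces no new conceptual ingredient beyond classical oscillatory integral techniques.
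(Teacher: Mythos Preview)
Your approach is correct and is essentially the classical oscillatory-integral analysis of the integral representation~\eqref{sphericalfunction}. The paper itself does not carry out this analysis: for the first item it simply says the bound is immediate from~\eqref{sphericalfunction} (which is exactly your trivial bound $|e^{i\lambda s}|=1$ together with $\sinh t\sim t$), and for the remaining two items it defers entirely to the reference~\cite{T}. What you have written is, in outline, the argument one finds in such references: localize near the endpoints $s=\pm r$, extract the power $u^{\rho-1}$ via the factorization $\cosh r-\cosh s=2\sinh\frac{r+s}{2}\sinh\frac{r-s}{2}$, evaluate the leading term by the Mellin-type identity, and bound the remainders by integration by parts. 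So there is no genuine methodological difference to report; you have simply supplied the content that the paper outsources.

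One small caution worth flagging in your write-up: for $d\geq 3$ (i.e.\ $\rho\geq 1$) the Mellin integral $\int_0^\infty e^{-i\lambda u}u^{\rho-1}\,du$ is not absolutely convergent and must be interpreted as an oscillatory integral (equivalently, regularized and analytically continued); this is routine but should be stated. Likewise, the error bound coming from replacing $a(r,u)$ by $1$ requires controlling not just $\partial_u a(r,0)$ but higher derivatives of $a$ uniformly on $[0,2r]$, since one integration by parts alone does not immediately give the $\lambda^{-\rho-1}$ gain when $\rho>1$; you will need $\lceil\rho\rceil$ integrations by parts or an equivalent device. These are exactly the ``tedious bookkeeping'' issues you acknowledge, and they do not affect the validity of the plan.
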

\begin{proof} The first assertion follows immediately from~\eqref{sphericalfunction}; and the proof for the last two can be found in \cite{T}. \end{proof}

\subsubsection{Iwasawa coordinates} \label{gpe}
Another global system of coordinates is deduced from the Iwasawa decomposition $\mathbb{G} = \mathbb{N} \mathbb{A} \mathbb{K},$ where $\mathbb{A}$ is the subgroup of $\mathbb{G}$ made up of Lorentz boosts in the first variable 
\begin{align*}
\mathbb{A} = \left\{a_t = \left( \begin{array}{ccc}
 \cosh t & \sinh t & 0\\
\sinh t & \cosh t & 0\\
0 & 0 & I_{d-1} 
\end{array} \right), t \in \mathbb{R}
\right\}
\end{align*}
and
\begin{align*}
\mathbb{N} = \left\{ n_v = \left( \begin{array}{ccc}
1+\vert v \vert^2/2 & - \vert v \vert^2/2 & v^\top \\
\vert v \vert^2/2 & 1-\vert v \vert^2/2 & v^\top \\
v & -v & I_{d-1} 
\end{array} \right), v \in \mathbb{R}^{d-1}
\right\}.
\end{align*}
The coordinates $s$ and $v$ are then defined by $x = n_v a_s \cdot \mathbf 0$. 
In other words,
$$
x =  \left(\cosh s + e^{-s} \frac{\vert v \vert^2}{2},  \sinh s + e^{-s} \frac{\vert v \vert^2}{2}, e^{-s} v_1, ... ,e^{-s} v_{d-1} \right), \qquad (s,v) \in \mathbb{R}^d.
$$
Note that these coordinates are such that the orbits $\mathbb{N} a_s \cdot \mathbf{0}$ are horocycles. \\
The Riemannian metric becomes
$$
e^{-2s}( (dv_1)^2 + \dots + (dv_{d-1})^2) + (ds)^2,
$$
and at the North Pole $\omega_{NP} = (1,0,\dots 0) \in \mathbb{S}^{d-1}$, the function $h_{\lambda,\omega_{NP}}$ becomes
\begin{equation}
\label{mesangeboreale}
h_{\lambda,\omega_{NP}}(s,v) = e^{(\rho - i\lambda) s}.
\end{equation}

\section{Two examples} \label{lower-b}

\subsection{The radial example}
In the regime $p \geqslant p_{ST},$ the lower bound on $\Vert P_{\Lambda} \Vert_{L^{p'} \rightarrow L^p}$ is a direct consequence of the following lemma.

\begin{lemma}
\label{lemmaradial}
The spherical function $\Phi_\lambda$ satisfies
$$
\| \Phi_\Lambda \|_{L^p} \gtrsim  \Lambda^{-d}.
$$
\end{lemma}

\begin{proof} Using Proposition \ref{spherical-asympt} we can write that for $p>\frac{2d}{d-1}$ we have
$$
\| \Phi_\lambda \|_{L^p}^p \gtrsim \int_{1/\lambda}^{\infty} \frac{1}{\lambda^{\rho p} (\sinh r)^{\rho p}} (\sinh r)^{d-1} \,dr \sim \Lambda^{-d}.
$$
\end{proof}

\subsection{The Knapp example}

\begin{lemma} 
\label{lemmaknapp} For $\delta<1$, let $\varphi_\delta$ be the characteristic function of the set $\{ \omega \in \mathbb{S}^{d-1}, \,|\omega - NP| < \delta \}$, where $NP$ is the north pole $(1,0,\dots,0) \in \mathbb{S}^{d-1}$.
Assume that $\lambda \delta^2  \ll 1$. Adopting the Iwasawa coordinates from Section~\ref{gpe},
$$
[ E_\lambda \varphi_\delta ](x) \gtrsim \lambda^\rho \delta^{d-1} e^{\rho s} \qquad \mbox{if $-\infty < s < - \frac{1}{2} \log ( \lambda \delta^2)$ and $|v| \ll \frac{1}{\delta \lambda}$}.
$$
As a consequence,
$$
\| E_\lambda \varphi_\delta \|_{L^p} \gtrsim \frac{1}{(p-2)^{1/p}} \lambda^{\frac{\rho}{2} - \frac{\rho}{p}} \delta^\rho.
$$
\end{lemma}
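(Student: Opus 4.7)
The plan is to work in Iwasawa coordinates and expand the bracket $[x, b(\omega)]$ for $\omega$ close to the north pole $NP = (1,0,\dots,0)$. Writing $\omega = (\omega_1, \omega')$ with $\omega' \in \mathbb{R}^{d-1}$, a direct calculation from the Iwasawa parametrization of $x$ yields
$$
[x, b(\omega)] = e^{-s} + (1-\omega_1)\sinh s + \tfrac{1}{2}e^{-s}|v|^2(1-\omega_1) - e^{-s}(v \cdot \omega') = e^{-s}\bigl(1 + \epsilon(\omega)\bigr),
$$
where
$$
\epsilon(\omega) = (1-\omega_1)\,e^s \sinh s + \tfrac12 |v|^2 (1-\omega_1) - v \cdot \omega'.
$$
Using the identity $\overline{h_{\lambda,\omega}(x)} = [x,b(\omega)]^{-i\lambda-\rho}$ and factoring out the $\omega$-independent phase $e^{(\rho+i\lambda)s}$, the problem reduces to showing that the factor $(1+\epsilon(\omega))^{-i\lambda-\rho}$ is close to $1$ on the support of $\varphi_\delta$, so that the integral defining $E_\lambda\varphi_\delta$ picks up constructive contributions.

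The first key step is therefore to check that $|\epsilon(\omega)| \ll 1$ and $|\lambda\epsilon(\omega)| \lesssim 1$ uniformly on the specified region. For $|\omega - NP|<\delta$ one has $1-\omega_1 \lesssim \delta^2$ and $|\omega'| \lesssim \delta$, so the three summands of $\epsilon$ are bounded respectively by $\delta^2\max(1,e^{2s})$, $|v|^2\delta^2$ and $|v|\delta$. The hypothesis $s < -\tfrac12\log(\lambda\delta^2)$ gives $\delta^2 e^{2s} \lesssim 1/\lambda$ (and $\lambda\delta^2 \ll 1$ handles the case of very negative $s$), while $|v| \ll 1/(\lambda\delta)$ gives $|v|\delta \ll 1/\lambda$ and hence $|v|^2\delta^2 \ll 1/\lambda^2$; all three contributions are thus $O(1/\lambda)$. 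Consequently $(1+\epsilon)^{-\rho-i\lambda} = e^{-(\rho+i\lambda)\log(1+\epsilon)}$ has modulus $\sim 1$ and argument $-\lambda\epsilon + O(\lambda\epsilon^2) = O(1)$, so its real part is uniformly bounded below on the Knapp region. Since the measure of the spherical cap is $\sim \delta^{d-1}$ and $|c(\lambda)|^{-1}\sim\lambda^\rho$, this yields $|[E_\lambda\varphi_\delta](x)| \gtrsim \lambda^\rho\delta^{d-1}e^{\rho s}$.

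For the $L^p$ lower bound, I would use that the Riemannian volume element in Iwasawa coordinates is $e^{-2\rho s}\,ds\,dv$. The $v$-integration contributes a factor $\sim (\lambda\delta)^{-(d-1)}$, and the $s$-integration yields $\int_{-\infty}^{-\tfrac12\log(\lambda\delta^2)}e^{\rho(p-2)s}\,ds \sim (p-2)^{-1}(\lambda\delta^2)^{-\rho(p-2)/2}$. Using $d-1=2\rho$ and collecting powers gives $\|E_\lambda\varphi_\delta\|_{L^p}^p \gtrsim (p-2)^{-1}\lambda^{\rho p/2 - \rho}\delta^{\rho p}$; taking $p$-th roots produces the claimed inequality, with the factor $(p-2)^{-1/p}$ generated by the $s$-integration on an unbounded interval. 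The main obstacle is the first step: aligning each of the three geometric constraints defining the Knapp region with precisely one of the three error terms in $\epsilon$ so that $|\lambda\epsilon|\lesssim 1$ holds uniformly; once that is established the remaining steps are elementary.
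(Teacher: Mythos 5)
Your proof is correct and follows essentially the same approach as the paper. The paper controls the phase by estimating the logarithmic derivatives $\partial_\omega \log [x,b(\omega)]$ and $\partial_\omega \log h_{\lambda,\omega}(x)$ over the spherical cap and requiring both to be $\ll 1/\delta$; you instead factor $[x,b(\omega)] = e^{-s}(1+\epsilon(\omega))$ and bound $|\lambda\epsilon(\omega)|$ directly, which is the integrated form of the same estimate and is arguably a cleaner presentation. The derivation of the three error terms, their matching with the three Knapp constraints, and the computation of the $L^p$ norm via the $ds\,dv$ integration all agree with the paper. One small point worth noting for precision: near the upper endpoint $s \sim -\tfrac12\log(\lambda\delta^2)$ the first summand gives $\lambda\cdot(1-\omega_1)e^s\sinh s$ of size $O(1)$ rather than $o(1)$, so strictly speaking one needs either to track the explicit constant (here $\le 1/4$, which is fine) or, as the paper does, to shrink the $s$-interval by a fixed constant $C$; this has no effect on the $L^p$ lower bound.
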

\begin{proof} 
The sphere $\mathbb{S}^{d-1}$ has coordinates $(\omega_1, \dots, \omega_d)$ in $\mathbb{R}^d$. Close to the North Pole, we parameterize $\mathbb{S}^{d-1}$ by $\omega_2,\dots,\omega_d$. Note that $|\partial_{\omega_2} \omega_1| \sim |\omega_2|<\delta$ on the support of $\varphi_\delta$.

In Iwasawa coordinates,
$$
[x,b(\omega)] = \cosh s + e^{-s} \frac{|v|^2}{2} - \omega_1 \left( \sinh s + e^{-s} \frac{|v|^2}{2} \right) - \omega_2 e^{-s} v_1 - \dots - \omega_d e^{-s} v_{d-1},
$$
so that in particular $[x,b(\omega_{NP})] = e^{-s}$.

We want to find the set of $(s,v)$ such that, if $|\omega - NP| <\delta$,
\begin{enumerate}
\item  $\displaystyle
\left| \frac{\partial_\omega [x,b(\omega)]}{[x,b(\omega_{NP})]} \right| \ll \frac{1}{\delta}
$
\item and $\displaystyle
\left| \frac{\partial_\omega h_{\lambda,\omega}(x)}{h_{\lambda,\omega}(x)} \right| \ll \frac{1}{\delta}.
$
\end{enumerate}
By symmetry, it suffices to replace above $\partial_\omega$ by $\partial_{\omega_2}$.
To address point (1), we can bound
$$
\left| \frac{-(\partial_{\omega_2} \omega_1)(\sinh s + e^{-s} \frac{|v|^2}{2}) - e^{-s} v_1}{e^{-s}} \right| \lesssim \delta (1 + e^{2s}) + \delta |v|^2 + |v_1|.
$$
Recalling that $\lambda \delta^2 \ll 1$, this is $\ll \frac{1}{\delta}$ if 
\begin{equation}
\label{conditionsv}
s < -\log \delta - C \quad \mbox{and} \quad |v| \ll \frac{1}{\delta}.
\end{equation}
Under these conditions, and still assuming $|\omega - NP| <\delta$, we have $[x,b(\omega)] \gtrsim [x,b(\omega_{NP})] = e^{-s}$.

Assuming these conditions are satisfied and turning to point (2),we estimate
$$
\left| \frac{\partial_{\omega_2} h_{\lambda,\omega}(x)}{h_{\lambda,\omega}(x)} \right| = \left| (i\lambda - \rho) \frac{\partial_{\omega_2} [x,b(\omega)]}{[x,b(\omega)]} \right| \lesssim \left| (i\lambda - \rho) \frac{\partial_{\omega_2} [x,b(\omega)]}{e^{-s}} \right|
\lesssim \lambda \left[ (1+e^{2s})\delta + \delta |v|^2 + |v_1| \right].
$$
This is $\ll \frac{1}{\delta}$ if 
$$
s < -\frac{1}{2} \log (\lambda \delta^2) - C, \quad \mbox{and} \quad |v| \ll \frac{1}{\lambda\delta},
$$
which in particular implies~\eqref{conditionsv}. Under these conditions, we have $|h_{\lambda, \omega}(x)| \gtrsim |h_{\lambda, \omega_{NP}}(x)| = e^{-s}$, which implies that
$$
[E_\lambda \varphi_\delta](x) \gtrsim \lambda^\rho \delta^{d-1} e^{\rho s}.
$$
We can now compute a lower bound for the $L^p$ norm of $E_\lambda \varphi_\delta$, using the expression for the metric in Iwasawa coordinates:
$$
\| E_\lambda \varphi_\delta \|_{L^p}^p \gtrsim (\delta^{d-1} \lambda^\rho)^p \int_{-\infty}^{- \frac{1}{2} \log ( \lambda \delta^2)} e^{\rho p s} \left( \frac{e^{-s}}{\lambda \delta} \right)^{d-1} \,ds \sim
\frac{1}{p-2} \lambda^{\rho(\frac{p}{2} - 1)} \delta^{\rho p} .
$$
\end{proof}

\section{Boundedness of spectral projectors: proof of Theorem \ref{thm-projector}} \label{pf-restr}

\subsection{Duality line}

\noindent \underline{Step 1: real interpolation.}
Let us denote
\begin{align*}
P_{\Lambda}f = \delta_{\Lambda}(D)f := \vert \textbf{c}(\Lambda) \vert^{-2} f * \Phi_{\Lambda} ,
\end{align*}
where $\Phi_{\Lambda}$ is defined in \eqref{sphericalfunction}. 

We decompose $\delta_{\Lambda}(\lambda)$ into
$$
\delta_\Lambda(\lambda) = 2^{k_0} \chi(2^{k_0}(\lambda - \Lambda))- \sum_{k \geq k_0} 2^{k} \psi(2^k(\lambda-\Lambda)),
$$
where 
$$
\chi \in \mathcal{S}, \quad \int \chi = 1, \quad \psi = \chi - 2 \chi(2 \cdot), \quad 2^{k_0} \sim \frac{1}{\Lambda}.
$$
Furthermore, we choose $\chi$ such that $\widehat{\psi}$ is supported on an annulus. In order to use formulas~\eqref{albatros1} and~\eqref{albatros2}, we need even multipliers, so that we will actually write
$$
\delta_{\Lambda}(\lambda) + \delta_{-\Lambda}(\lambda) = \underbrace{2^{k_0} \chi(2^{k_0}(\lambda - \Lambda)) + 2^{k_0} \chi(2^{k_0}(-\lambda - \Lambda))}_{\displaystyle \mathcal{J}_{\Lambda,k_0}(\lambda)} - \sum_{k \geq k_0}  \underbrace{2^{k} \psi(2^k(\lambda-\Lambda)) + 2^{k} \psi(2^k(-\lambda-\Lambda))}_{\displaystyle  \mathcal{K}_{\Lambda,k_0}(\lambda)}.
$$
We will denote the convolution kernels of $\mathcal{J}_{\Lambda,k}(D)$ and $\mathcal{K}_{\Lambda,k}(D)$ by
$$
J_{\Lambda,k}(r) \qquad \mbox{and} \qquad K_{\Lambda,k}(r)
$$
respectively.

The operator norm of $\mathcal{K}_{\Lambda,k}(D)$ can be bounded as follows:
\begin{itemize}
\item By Plancherel's theorem, $\| \mathcal{K}_{\Lambda,k}(D) \|_{L^2 \to L^2} \lesssim 2^k$.
\item Turning to ${L^1 \to L^\infty}$ bounds, they can be obtained thanks to Lemma~\ref{busard}
$$
\| \mathcal{K}_{\Lambda,k}(D) \|_{L^1 \to L^\infty} \lesssim \| K_{\Lambda,k}(r) \|_{L^\infty} \lesssim \Lambda^\rho (\sinh(c 2^k))^{-\rho}.
$$
\end{itemize}
Interpolating between these two estimates,
\begin{align} \label{main-interp}
\| \mathcal{K}_{\Lambda,k}(D) \|_{L^{p'} \to L^p} \lesssim 2^{\frac{2}{p}k} \left[ {\Lambda} (\sinh(c 2^k))^{-1}\right]^{\big(1-\frac{2}{p}\big)\rho}.
\end{align}
Proceeding analogously,
$$
\| \mathcal{J}_{\Lambda,k_0}(D) \|_{L^{p'} \to L^p} \lesssim \Lambda^{-\frac{2}{p}} \Lambda^{(d-1)\big(1-\frac{2}{p}\big)}.
$$
Therefore, as long as $p \in [2,p_{ST}) \cup (p_{ST},\infty]$,
\begin{align*}
\| P_\Lambda \|_{L^{p'} \to L^p} & \lesssim \sum_{k \geq k_0} 2^{\frac{2}{p}k} \left[ \frac{\Lambda}{\sinh (c2^k)} \right]^{\big(1-\frac{2}{p}\big)\rho} \\
&  \lesssim   \Lambda^{\big(1-\frac{2}{p} \big)\rho} \bigg[ \sum_{k=k_0}^0 2^{\frac{2}{p}k - \big(1 - \frac{2}{p} \big)\rho k}  + \sum_{k=0}^{\infty} 2^{\frac{2}{p}k} \sinh(c 2^k)^{-\big(1 - \frac{2}{p} \big)\rho } \bigg]
\\ 
& \lesssim  \Lambda^{\big(1-\frac{2}{p} \big)\rho} \sum_{k=k_0}^0 2^{\frac{d-1}{2} \big( \frac{p_{ST}}{p}-1 \big) k} \\ 
&  \lesssim\left\{ \begin{array}{lll}
\Lambda^{d-1-\frac{2d}{p}} & \mbox{if $p > p_{ST} = \frac{2(d+1)}{d-1}$} \\
(1 + \ln \Lambda)\Lambda^{\big(1 - \frac{2}{p_{ST}} \big)\rho}  & \mbox{if $p = p_{ST} $}
\\
\left[1+(p-2)^{-1}\right] \Lambda^{(d-1) \left( \frac{1}{2} -\frac{1}{p} \right) } & \mbox{if $2 < p < p_{ST}$}
\end{array} \right. 
\end{align*}
The sharp dependence on $p$ is obtained using that $\sup_{x>0} x^{\alpha} e^{-\beta x} = \big( \frac{\alpha}{\beta} \big)^{\alpha} e^{-\alpha}$ where $\alpha,\beta>0.$

To remove the logarithmic singularity when $p=p_{ST},$ we use a complex interpolation argument.

\bigskip
\noindent
\underline{Step 2: complex interpolation.} Define the analytic family of operators
$$
Q_{\Lambda}(s) =\big(2^s - \frac{1}{2} \big) \sum_{k \geq k_0-1} 2^k  2^{ks} \left[ \psi(2^k(D-\Lambda)) + \psi(2^k(-D-\Lambda))\right]
$$
It is such that
\begin{itemize}
\item $Q_\Lambda(0) = \frac{1}{2} \big( P_\Lambda - \mathcal{J}_{\Lambda,k_0} \big)$. 
\item $Q_{\Lambda}$ is $i\frac{2 \pi}{\log 2} \mathbb{Z}-$periodic in $s.$
\item If $\mathfrak{Re}(s) = -1$, then
\begin{align*}
\Vert Q_{\Lambda} (s) \Vert_{L^2 \to L^2} \lesssim \Bigg \Vert (2^{i \Im s} -1) \sum_{k \geqslant k_0} 2^{i(\Im s)k} \psi(2^k \lambda) \Bigg \Vert_{L^{\infty}_{\lambda}},
\end{align*}
slightly abusing notations since we do not distinguish between $\psi$ and its even part.

Finally for any integer $N$ we have 
\begin{align*}
\bigg \vert  (2^{i \Im s} -1) \sum_{N \geqslant k \geqslant k_0} 2^{i(\Im s)k} \psi(2^k \lambda) \bigg \vert & = \bigg \vert 2^{i \Im s} \big( \psi(2^N \lambda) - \psi(2^{k_0 \lambda} \big) - \sum_{N \geqslant k \geqslant k_0-1} 2^{i \Im s} \int_{2^{k-1}}^{2^k} \psi'(\mu) d\mu \bigg \vert \\
& \lesssim 2 \Vert \psi \Vert_{L^{\infty}} + \Vert \psi' \Vert_{L^1},
\end{align*}
and we conclude that $Q_{\Lambda}(s)$ is a bounded $L^2-L^2$ operator, uniformly on $s.$
%%%%%%%%
%%%%%%%%
\iffalse
$$
\left\| \sum_{k \geq k_0-1} 2^k 2^{ks} \psi(2^k(\lambda-\Lambda)) \right\|_{L^\infty} \lesssim \left\| \sum_{k \geq k_0-1} | \psi(2^k(\lambda-\Lambda))| \right\|_{L^\infty} < \infty.
$$
\fi
%%%%%%%%
%%%%%%%%
\item If $\mathfrak{Re}(s) = \rho$, $Q_\Lambda(s)$ maps $L^1$ to $L^\infty$ with operator norm $O(\Lambda^\rho)$ (uniformly in $\mathfrak{Im} s$) since, by Lemma~\ref{busard} below,
$$
\left\| \sum_{k \geq k_0-1} 2^{ks} K_{\Lambda,k}(r) \right\|_{L^\infty} \lesssim \left\| \sum_{k \geq k_0-1} 2^{\rho k} |K_{\Lambda,k}(r)| \right\|_{L^\infty} \lesssim \Lambda^\rho.
$$
\end{itemize}
The desired bound for $p = p_{ST}$ follows by complex interpolation, see for example Theorem 4.1, Chapter 5 in \cite{SW}.

\begin{lemma}[Pointwise kernel bounds] \label{busard}
For $2^k \gtrsim \Lambda^{-1} $, the following pointwise bounds hold:
\begin{align*}
&  \big \vert J_{\Lambda,k_0} (r) \big \vert \lesssim \Lambda^{d-1} ,\\
& \big \vert K_{\Lambda,k} (r) \big \vert \lesssim \bigg(\frac{\Lambda}{\sinh(c2^k)} \bigg)^{\rho} \left[ \textbf{1}_{2^k c \leqslant r \leqslant 2^k C} + \textbf{1}_{ r \leqslant 2^k c} (2^{-k} \Lambda^{-1})^{10} \right],
\end{align*}
for any $r>0$ and some positive constants $c,C$. 
\end{lemma}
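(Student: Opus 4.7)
The plan is to substitute the explicit Fourier transforms of the multipliers into the kernel formulas~\eqref{albatros1}--\eqref{albatros2} and estimate the resulting expressions by tracking powers of $\Lambda$, $(\sinh r)^{-1}$, and the gains from oscillation. A direct one-variable computation gives
\[
\widehat{\mathcal{J}_{\Lambda,k_0}}(r)=2\cos(\Lambda r)\widehat{\chi}(2^{-k_0}r),\qquad \widehat{\mathcal{K}_{\Lambda,k}}(r)=2\cos(\Lambda r)\widehat{\psi}(2^{-k}r),
\]
so that by the annular support of $\widehat{\psi}$ the second is supported in $r\in[c2^k,C2^k]$, while the first is a Schwartz function of scale $2^{k_0}\sim\Lambda^{-1}$ and size $O(1)$.

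For odd $d$ I would apply~\eqref{albatros1} and expand $\bigl(\tfrac{1}{\sinh r}\partial_r\bigr)^{\rho}$ by the Leibniz rule as $\sum_{j\leq\rho}a_j(r)\partial_r^{j}$ with smooth coefficients satisfying $|a_j(r)|\lesssim(\sinh r)^{-\rho}$ (the top-order one being exactly $(\sinh r)^{-\rho}$). On the annular support of $\widehat{\mathcal{K}_{\Lambda,k}}$ every $\partial_r$ that hits $\cos(\Lambda r)$ contributes $\Lambda$, while derivatives falling on $\widehat{\psi}(2^{-k}r)$ contribute $2^{-k}\leq\Lambda$; the dominant term is of size $\Lambda^{\rho}\sinh(c2^k)^{-\rho}$, which is the desired bound, and $K_{\Lambda,k}$ vanishes off the annulus (so the second indicator in the statement is vacuous for odd $d$). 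For $J_{\Lambda,k_0}$ the same analysis applies at scale $r\sim 2^{k_0}\sim\Lambda^{-1}$: using $(\sinh r)^{-1}\lesssim r^{-1}$ together with the fact that $\cos(\Lambda r)\widehat{\chi}(2^{-k_0}r)$ is smooth and even in $r$ (so the differential operator is well-defined through $r=0$), one gets $|J_{\Lambda,k_0}(r)|\lesssim\Lambda^{2\rho}=\Lambda^{d-1}$ for $r\lesssim\Lambda^{-1}$, while the Schwartz decay of $\widehat{\chi}$ handles $r\gtrsim\Lambda^{-1}$.

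For even $d$ I set $g(s):=\bigl(\tfrac{1}{\sinh s}\partial_s\bigr)^{d/2}\widehat{\mathcal{K}_{\Lambda,k}}(s)$; the same Leibniz argument shows $g$ is supported in $s\sim 2^k$ with leading oscillatory part proportional to $(\Lambda/\sinh s)^{d/2}\cos(\Lambda s)\widehat{\psi}(2^{-k}s)$ (or its sine analogue), so that $|g(s)|\lesssim(\Lambda/\sinh(c2^k))^{d/2}$. Formula~\eqref{albatros2} then reads
\[
K_{\Lambda,k}(r)=C\int_{r}^{\infty}g(s)(\cosh s-\cosh r)^{-1/2}\sinh s\,ds.
\]
When $r\in[c2^k,C2^k]$ the square-root singularity $(\cosh s-\cosh r)^{-1/2}\sim(\sinh r)^{-1/2}(s-r)^{-1/2}$ combines with the oscillatory factor $e^{\pm i\Lambda s}$ through the classical identity $\int_{0}^{\infty}u^{-1/2}e^{i\Lambda u}\,du=\sqrt{\pi/\Lambda}\,e^{i\pi/4}$ to produce a gain of $\Lambda^{-1/2}\sinh(c2^k)^{-1/2}$; together with the measure factor $\sinh s\sim\sinh(c2^k)$ this yields
\[
|K_{\Lambda,k}(r)|\lesssim\Bigl(\tfrac{\Lambda}{\sinh(c2^k)}\Bigr)^{d/2}\cdot\sinh(c2^k)\cdot\Lambda^{-1/2}\sinh(c2^k)^{-1/2}=\Bigl(\tfrac{\Lambda}{\sinh(c2^k)}\Bigr)^{\rho}.
\]
When $r<c2^k$ the singularity is avoided since $s\sim 2^k\gg r$, and I would integrate by parts in $s$ using the oscillation of $\cos(\Lambda s)$: each IBP gains $\Lambda^{-1}$ from the phase, while by Leibniz the derivatives distribute onto $\widehat{\psi}(2^{-k}s)$ (producing factors $2^{-k}$) and onto the smooth weight $\sinh s\,(\cosh s-\cosh r)^{-1/2}$ (producing bounded factors). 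All boundary terms vanish thanks to the compact support of $\widehat{\psi}$, and iterating sufficiently many times produces the claimed rapid decay $(2^{-k}\Lambda^{-1})^{10}$.

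The main obstacle is the even-dimensional annular estimate, in which one must cleanly extract the $\Lambda^{-1/2}$ gain from the interaction between the oscillatory factor in $g$ and the integrable square-root singularity of $(\cosh s-\cosh r)^{-1/2}$ at $s=r$, uniformly in $r\in[c2^k,C2^k]$; the remaining cases reduce to careful but routine Leibniz and integration-by-parts bookkeeping.
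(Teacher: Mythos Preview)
Your argument is correct and follows the same route as the paper. Two minor remarks: in the odd case your uniform bound $|a_j(r)|\lesssim(\sinh r)^{-\rho}$ fails for $j<\rho$ when $r\lesssim 1$ (there $a_j\sim r^{-(2\rho-j)}$), but the conclusion survives since the missing factor $r^{-(\rho-j)}\sim(2^{-k})^{\rho-j}\leq\Lambda^{\rho-j}$ is exactly recovered from having $\rho-j$ fewer derivatives landing on the oscillation; and in the even-dimensional annular case the paper implements your Fresnel heuristic by cutting the $s$-integral at $|s-r|\sim\Lambda^{-1}$ and $|s-r|\sim 1$, estimating the near piece crudely via $\cosh s-\cosh r\sim(s-r)\sinh r$ (which already yields the $\Lambda^{-1/2}$ gain through $\int_0^{\Lambda^{-1}}u^{-1/2}du$) and the remaining pieces by a single integration by parts.
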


\begin{proof} 
There are two cases to consider, depending on the parity of the space dimension. We start with the easier case where it is odd.

\medskip
\underline{Case 1:  $d$ is odd.} We learn from~\eqref{albatros1} that
$$
K_{\Lambda,k}(r) = \frac{1}{\sqrt{2\pi}} \left( \frac{-1}{2\pi} \frac{1}{\sinh r} \frac{\partial}{\partial r} \right)^\rho \left(\widehat{\psi}(2^{-k} r) e^{ir\Lambda} + \widehat{\psi}(-2^{-k} r) e^{-ir\Lambda}\right).
$$
Given our assumption on the support of $\widehat{\psi},$ we have $r \sim 2^k.$ 

Using \eqref{decompmder} and \eqref{boundFl} we obtain 
\begin{align*}
\bigg \vert K_{\Lambda,k}(r) \bigg \vert \lesssim \bigg( \frac{\Lambda}{\sinh(c 2^k)} \bigg)^{\rho} \sum_{l=0}^{\rho} \big(2^k \Lambda\big)^{l-\rho} \big( 2^{-k} e^{-r} \sinh r \big)^{l-\rho} \lesssim \bigg( \frac{\Lambda}{\sinh(c 2^k)} \bigg)^{\rho},
\end{align*}
where we used the assumption $2^k \gtrsim \Lambda^{-1}$ to obtain the last inequality. 

\medskip
\underline{Case 2: $d$ is even.} Using \eqref{albatros2} together with \eqref{decompmder} we can write that up to unimportant numerical constants,
\begin{align} \label{defI}
\begin{split}
\displaystyle K_{\Lambda,k}(r) & =  \sum_{l=0}^{d/2} I_{l,d/2,\Lambda,k} (r) \\
 I_{l,d/2,\Lambda,k}(r) & :=\frac{1}{\sqrt{ \pi}} \int_r^\infty F_{l,d/2}(s) \partial_s^l \left(\widehat{\psi}(2^{-k} s) e^{is\Lambda} + \widehat{\psi}(-2^{-k} s) e^{-is\Lambda} \right) (\cosh s - \cosh r)^{-1/2} 
% \chi_i(s-r) 
 \sinh s \,ds.
 %\\
%\chi_1 (s) & := \phi(\Lambda(s)), \ \ \chi_2(s) :=  \big(1 - \phi(\Lambda(s)) \big) \phi(s), \  \ \chi_3(s) := 1-  \phi(s).
\end{split} 
 \end{align}
Note that due to the compact support assumption on $\widehat{\psi},$ the integrals $ I_{l,d/2,\Lambda,k}$ are equal to $0$ when $r \geqslant 2^{k+4}.$ 

Therefore we can safely assume that $r<2^{k+4}.$ The worst region corresponds to $2^{k-4}< r<2^{k+4}.$ We focus on this case now.

We insert additional localizers and split the above integral in three parts:
\begin{align*}
 I_{l,d/2,\Lambda,k}(r) & := \sum_{j=1}^3  I_{l,d/2,\Lambda,k}^{(j)}(r) \\
  I_{l,d/2,\Lambda,k}^{(j)}(r) & :=  \frac{1}{\sqrt{ \pi}} \int_r^\infty F_{l,d/2}(s) \partial_s^l \left(\widehat{\psi}(2^{-k} s) e^{is\Lambda} + \widehat{\psi}(-2^{-k} s) e^{-is\Lambda} \right) (\cosh s - \cosh r)^{-1/2} 
 \chi_j(s-r) 
 \sinh s \,ds, 
 \\
\chi_1 (s) & := \phi(\Lambda s), \ \ \chi_2(s) :=  \big(1 - \phi(\Lambda s) \big) \phi(s), \  \ \chi_3(s) := 1-  \phi(s). 
\end{align*}
For the first piece $ I_{l,d/2,\Lambda,k}^{(1)}$, we use that $\cosh s - \cosh r = 2 \, \sinh \frac{s+r}{2} \, \sinh \frac{s-r}{2} \gtrsim (s-r) \, \sinh \frac{s}{2}.$ Indeed due to the localizer $\chi^{(1)},$ we have $0 \leqslant s-r \leqslant 2/\Lambda.$ 
We integrate directly using \eqref{boundFl} and obtain
\begin{align*}
\big \vert  I_{l,d/2,\Lambda,k}^{(1)}(r) \big \vert & \lesssim  \Lambda^l \int_r ^{\infty} \vert F_{l,d/2}(s) \vert \vert \widehat{\psi} (2^{-k} s ) \vert \chi_1(s-r) \, \sinh s  \frac{ds}{\sqrt{s-r} \, \sqrt{\sinh (s/2)}} 
\\
& \lesssim \Lambda^l \int_r ^{\infty} \frac{\vert \widehat{\psi}(2^{-k}s) \vert}{\sqrt{\sinh (s/2)}} (\sinh s)^{-d/2+1} \cdot (\sinh s)^{-d/2-1+l}  e^{s(d/2-l)} \, \chi_1(s-r)  \, \sinh s \, \frac{ds}{\sqrt{s-r}}
\\
& \lesssim  \bigg(\frac{\Lambda}{\sinh(c 2^k)} \bigg)^{\rho} \Lambda^{l-\rho}  \int_r ^{\infty} s^{d/2-l} \vert \widehat{\psi}(2^{-k} s ) \vert \cdot   \big( s^{l-d/2} (\sinh s)^{-d/2+l}  e^{s(d/2-l)} \big) \, \chi_1(s-r)  \, \frac{ds}{\sqrt{s-r}}
\\
& \lesssim  \bigg(\frac{\Lambda}{\sinh(c 2^k)} \bigg)^{\rho} \, \Lambda^{l-\rho} \, 2^{k (d/2-l)} \underbrace{\int_r ^{\infty}  \big( s^{l-d/2} (\sinh s)^{-d/2+l}  e^{s(d/2-l)} \big) \, \chi_1(s-r)  \, \frac{ds}{\sqrt{s-r}}}_{\lesssim \Lambda^{-1/2}}
\\
& \lesssim \bigg(\frac{\Lambda}{\sinh(c 2^k)} \bigg)^{\rho} \big(2^{-k} \Lambda^{-1} \big)^{\frac{d}{2}-l}.
\end{align*}
The second factor is summable in $l$ given the condition $2^k \gtrsim \Lambda^{-1}$, and the desired bound follows.

For the second piece, we integrate by parts once in $s.$ Using that
\begin{align*}
&\Bigg \vert  \partial_s \bigg( \frac{\widehat{\psi}(2^{-k}s) F_{l,d/2}(s) \,  \sinh s \,  \phi(s-r)}{\sqrt{\cosh s - \cosh r}} \bigg) \Bigg \vert \\
\lesssim &\bigg[2^{-k} + \frac{e^s}{\sinh s} + \frac{\sinh s}{\cosh s - \cosh r} \bigg] \vert \widehat{\psi}(2^{-k} s) \vert \big(\sinh s \big)^{-d+l+1} e^{s\big(\frac{d}{2} - l \big)} \vert \phi(s-r) \vert ,
\end{align*}
we deduce the bound
\begin{align*}
\big \vert  I_{l,d/2,\Lambda,k}^{(2)}(r) \big \vert & \lesssim \Lambda^l \Bigg \vert \int_r^{\infty} \frac{e^{i\Lambda s}}{i\Lambda} \partial_s \bigg( \frac{\widehat{\psi}(2^{-k}s) F_{l,d/2}(s) \,  \sinh s \, \phi(s-r)}{\sqrt{\cosh s - \cosh r}} \bigg) \big(1 - \phi(\Lambda(s-r)) \big) \, ds \Bigg \vert \\
&\lesssim \bigg(\frac{\Lambda}{\sinh(c 2^k)} \bigg)^{\rho} \big(2^{-k} \Lambda^{-1} \big)^{\frac{d}{2}-l} \big[ 2^{-k} \Lambda^{-1} + 2^{-k} \Lambda^{-1} +  1 \big].
\end{align*}
For the last bound we used the fact that since $0 \leqslant s-r \leqslant 2, $ we have $(\cosh s - \cosh r)^{-1} \sinh s \sim (s-r)^{-1}. $ Note that we discarded the case where the derivative hits the $\Lambda-$dependent localizer since that term can be treated by direct integration as for $I_{l,d/2,\Lambda,k}^{(1)}.$ 

We conclude as above.

The last piece $I_{l,d/2,\Lambda,k}^{(3)}$ is handled similarly, in fact easier since we have 
\begin{align*}
\bigg \vert \big(1- \phi(s-r) \big) \partial_s^{\alpha} \big( \frac{1}{\sqrt{\cosh s -\cosh r}} \big) \bigg \vert \lesssim e^{-s/2}.
\end{align*}

Finally when $r \leqslant 2^{k-4}.$ Coming back to \eqref{defI} we perform ten integrations by parts relying on the identity $\frac{1}{i \Lambda} \partial_s e^{is \Lambda} = e^{is \Lambda}. $ In that regime $s-r \sim s$ (indeed recall that $s \sim 2^k$), therefore we have
\begin{align*}
\bigg \vert \partial_s^{10} \bigg( \frac{ F_{l,d/2}(s) \widehat{\psi}(2^{-k} s)}{\sqrt{\cosh s - \cosh r}} \bigg) \bigg \vert \lesssim \bigg( 2^{-10k} + (\sinh s)^{-10} e^{s(d/2-l+10)}  \bigg) (\sinh s)^{-d+l}  e^{s(d/2-l)}  \vert \widehat{\psi}(2^{-k}s) \vert.
\end{align*}
This yields the bound
\begin{align*}
\vert I_{l,d/2,\Lambda,k}(r) \vert \lesssim \bigg( \frac{\Lambda}{\sinh(c 2^k)} \bigg)^{\rho} \big(2^{-k} \Lambda^{-1} \big)^{\frac{d}{2}-l+10},
\end{align*}
which in turn gives the desired result. 

Turning to $J_{\Lambda, k_0}$, we only give a brief justification in the case $d$ odd. Since $2^{k_0} \sim \Lambda^{-1}$, the formula for $J_{\Lambda,k_0}$ can be written
$$
J_{\Lambda,k_0}(r) = \left( \frac{1}{\sinh r} \frac{\partial}{\partial r} \right)^\rho \left(F(r\Lambda) + F(-r\Lambda)\right),
$$
for a function $F$ with bounded derivatives. From this expression, it is not hard to derive the desired bound.
\end{proof}

\subsection{Optimality on the duality line} Since $P_\lambda = (R_\lambda)^* R_\lambda$, it follows that
$$
\| (R_\lambda)^* \|_{L^{2} \to L^{p'}} = \| R_\lambda \|_{L^p \to L^2} = \| P_\lambda \|_{L^{p'} \to L^p}^{1/2}.
$$

By Lemma~\ref{lemmaradial}, $\Phi_\lambda \in L^p$ for $p>2$ only, so that $P_\lambda$ can only be bounded from $L^s$ to $L^q$ if $q >2$. Since $P_\lambda$ is self-adjoint, this gives furthermore the condition $s<2$.

We also learn from Lemma~\ref{lemmaradial} that, if $p > \frac{2d}{d-1}$,
$$
\| E_\lambda \|_{L^2 \to L^p} \geq \frac{|c(\lambda)|^{-1} \| \Phi_\lambda \|_{L^p(\mathbb{H}^d)} }{\| 1 \|_{L^2(\mathbb{S}^{d-1})}} \gtrsim \lambda^{\rho - \frac{d}{p}},
$$
which implies that, if $p > \frac{2d}{d-1}$,
$$
\| P_\lambda \|_{L^{p'} \to L^p} \gtrsim \lambda^{d-1-\frac{2d}{p}}.
$$

Finally, by Lemma~\ref{lemmaknapp}, for any $\delta < \lambda^{-1/2}$,
$$
\| E_\lambda \|_{L^2 \to L^p}  \geq \frac{ \| E_\lambda \varphi_\delta \|_{L^p(\mathbb{H}^d)} }{\| \varphi_\delta \|_{L^2(\mathbb{S}^{d-1})}} \gtrsim (p-2)^{-1/2}  \lambda^{ \frac{\rho}{2}-\frac{\rho}p},
$$
which implies
$$
\| P_\lambda \|_{L^{p'} \to L^p} \gtrsim (p-2)^{-1}  \lambda^{ \frac{d-1}{2}-\frac{d-1}p}.
$$

\subsection{Off-duality line}
Since $P_\lambda = E_\lambda R_\lambda$, $R_{\lambda}^{\star} = E_{\lambda} ,$ we find $\| P_\lambda \|_{L^{p'} \to L^p} = \| E_\lambda \|_{L^{2} \to L^p}^{2} =  \| R_\lambda \|_{L^{p'} \to L^2}^{2}$ and 
\begin{align*}
\| P_\lambda \|_{L^{s} \to L^{q}} & \lesssim \| R_\lambda \|_{L^{s} \to L^2} \| E_\lambda \|_{L^2 \to L^{q}} \\
& \lesssim
\left[ \lambda^{\frac{d-1}{2}-\frac{d}{q }} + \left[ ( q-2)^{-1/2} + 1 \right] 
\lambda^{\frac{(d-1)}{2} \left( \frac{1}{2} -\frac{1}{q} \right) }\right]  \left[ \lambda^{\frac{d}{s }-\frac{d+1}{2}} + \left[ (2-s )^{-1/2} + 1 \right] \lambda^{\frac{(d-1)}{2} \left( \frac{1}{ s} -\frac{1}{2} \right) }\right]  .
\end{align*}

\section{Boundedness of the resolvent: proof of Theorem \ref{thm-resolvent}}
\noindent \textit{Notation:} In this section, we denote 
\begin{align*}
\alpha(p,d) = \left\{ \begin{array}{ll}
d-1-\frac{2d}{p} & \mbox{if $p \geq p_{ST} = \frac{2(d+1)}{d-1}$} \\
(d-1) \left( \frac{1}{2} -\frac{1}{p} \right) & \mbox{if $2 < p \leq p_{ST}$}
\end{array} \right. .
\end{align*}
\subsection{Duality line}
Let $\tau>1,\varepsilon>0.$ We write
\begin{align*}
(D^2-\tau-i\varepsilon)^{-1} = \mathcal{R}_{\tau}(D) + i \mathcal{I}_{\tau}(D), \ \ 
\mathcal{R}_{\tau}(D) :=\frac{D^2-\tau}{(D^2-\tau)^2 + \varepsilon^2}, \ \ 
\mathcal{I}_{\tau}(D) := \frac{\varepsilon}{(D^2-\tau)^2 + \varepsilon^2}.
\end{align*}

\subsubsection{Treating $\mathcal{R}_{\tau}(D)$} 
Let $k_0,k_1 \in \mathbb{Z}$ be such that $2^{k_0} \sim \frac{1}{\sqrt{\tau}}, 2^{k_1} \sim \frac{1}{\delta},$ $\delta>0$ to be chosen later. Let $\chi:\mathbb{R} \rightarrow \mathbb{R}$ be such that 
\begin{align*}
\chi(0) \neq 0,  \chi \in \mathcal{S},
\end{align*} 
and we require that the support of the Fourier transform of $x \mapsto \frac{\chi(x)-\chi(2 x) }{x}$ be contained in an annulus.  We write
\begin{align*}
\frac{D^2-\tau}{(D^2-\tau)^2 + \varepsilon^2} &= \chi(0)^{-1} \bigg[ \underbrace{\big(\chi(0)- \chi(2^{k_0}(D-\sqrt{\tau})\big) \frac{D^2-\tau}{(D^2-\tau)^2 + \varepsilon^2}}_{\displaystyle \mathcal{A}_{\tau,k_0}(D)} \\ &+ \underbrace{\sum_{k=k_0}^{k_1} \big[ \chi(2^k(D-\sqrt{\tau})) - \chi(2^{k+1}(D-\sqrt{\tau})) \big]\frac{D^2-\tau}{(D^2-\tau)^2 + \varepsilon^2}}_{\displaystyle  \mathcal{B}_{\tau,k_0,k_1}(D)}\\
& + \underbrace{\chi(2^{k_1+1}(D-\sqrt{\tau}))\frac{D^2-\tau}{(D^2-\tau)^2 + \varepsilon^2}}_{\displaystyle \mathcal{C}_{\tau,k_1}(D)}  \bigg].
\end{align*}
As in the proof of Theorem \ref{thm-projector}, we are really considering the even extension of those multipliers. In the sequel we will abuse notations by still denoting $\chi$ its even part. \\
\textit{Treating $\mathcal{A}_{\tau,k_0}(D)$:} We introduce a second even cut-off $\phi:\mathbb{R} \rightarrow [0,1]$ that is equal to $1$ on $[-1,1],$ whose support is included on $[-2,2].$ We split the kernel into two pieces adding localizations $\phi(\frac{D}{10\sqrt{\tau}})$ and $1-\phi(\frac{D}{10\sqrt{\tau}})$ respectively. $A_{\tau,k_0}^{(1)}$ and $A_{\tau,k_0}^{(2)}$ denote the corresponding kernels.
We have 
\begin{align*}
\widetilde{ A_{\tau,k_0}^{(1)}} (\lambda) = \frac{\lambda^2-\tau}{(\lambda^2-\tau)^2 + \varepsilon^2} \phi\big(\frac{\lambda}{10\sqrt{\tau}} \big) \big(\chi(0) -\chi(2^{k_0}(\lambda-\sqrt{\tau})) \big).
\end{align*}
Using the assumption on the support of $\phi$ we write 
\begin{align*}
\big \vert \widetilde{ A_{\tau,k_0}^{(1)}} (\lambda) \big \vert & \lesssim 2^{k_0} \cdot \Bigg \vert (\lambda - \sqrt{\tau}) \frac{\lambda^2 - \tau}{(\lambda^2 - \tau)^2+\varepsilon^2}  \phi\big(\frac{\lambda}{10\sqrt{\tau}} \big) \Bigg \vert \cdot \Bigg \vert \frac{\chi(0)-\chi(2^{k_0}(\lambda - \sqrt{\tau}))}{2^{k_0}(\lambda - \sqrt{\tau})} \Bigg \vert \\
& \lesssim 2^{k_0} \cdot \sqrt{\tau} \cdot \Vert \chi' \Vert_{L^{\infty}} .
\end{align*}
Therefore we can conclude that 
\begin{align} \label{A-1-fourier}
\Vert \widetilde{ A_{\tau,k_0}^{(1)}} \Vert_{L^{\infty}_{\lambda}} \lesssim \tau^{-1}.
\end{align}
Note that to derive this bound we make crucial use of the $\chi$ factor. \\
Next we obtain a pointwise bound on the kernel $A_{\tau,k_0}^{(1)} .$ 

Let $m$ denote the multiplier that corresponds to the kernel $A_{\tau,k_0}^{(1)}.$ It is defined through its flat Fourier transform as
\begin{align*}
\widehat{m}(r) = \int_{\mathbb{R}} \cos (\lambda r) \frac{\lambda^2 - \tau}{(\lambda^2 - \tau)^2 + \varepsilon^2} \phi \big(\frac{\lambda}{10 \sqrt{\tau}} \big) \big( \chi(0) - \chi(2^{k_0}(\lambda - \sqrt{\tau})) \big) d\lambda.
\end{align*}

Therefore using \eqref{decompmder} and \eqref{boundFl} we find
\begin{align} \label{boundrlarge}
\notag \big \vert F_{l,M}(r) \partial_r ^l \widehat{m} \big \vert & \lesssim (\sinh r)^{-2 M + l} e^{r(M-l)}  \int_{\mathbb{R}} \vert  \lambda \vert ^l \phi \big(\frac{\lambda}{10 \sqrt{\tau}} \big) \bigg \vert \frac{\lambda^2 - r}{(\lambda^2 -r)^2 + \varepsilon^2} \big(\chi(0) - \chi(2^{k_0}(\lambda - \sqrt{\tau})) \big) \bigg \vert   d\lambda  \\
& \lesssim (\sinh r)^{-2M + l} e^{r(M-l)} \tau^{\frac{l-1}{2}} .
\end{align}
This estimate is sufficient in the regime $r \geqslant 1/\sqrt{\tau}.$ To deal with the regime $r<1/\sqrt{\tau}$ we derive a second pointwise estimate.

For the odd indices $l$ in the corresponding sum \eqref{decompmder}, we have that $\partial_r ^l \widehat{m} $ equals
\begin{align*}
& \partial_r ^l \bigg( \int_{\mathbb{R}} \bigg( \cos (\lambda r) - (-1)^{\frac{l-1}{2}} \frac{(r\lambda)^{l}}{l!}  \sum_{u=0}^{N} (-1)^u \frac{r^{2u} \lambda^{2u}}{(2u)!} \bigg)  \frac{\lambda^2-\tau}{(\lambda^2-\tau)^2+\varepsilon^2} \phi(\frac{\lambda}{10 \sqrt{\tau}}) \big(\chi(0) - \chi(2^{k_0}(\lambda-\sqrt{\tau}))\big) d\lambda \bigg) \\
&=r^{2\rho-l} \int_{\mathbb{R}} (-1)^{\frac{l-1}{2}} \lambda^{l} \frac{ \cos (\lambda r) -\sum_{u=0}^{N} (-1)^u \frac{(r \lambda)^{2u}}{(2u)!}}{r^{2\rho-l}}  \frac{\lambda^2-\tau}{(\lambda^2-\tau)^2+\varepsilon^2} \phi(\frac{\lambda}{10 \sqrt{\tau}}) \big(\chi(0) - \chi(2^{k_0}(\lambda-\sqrt{\tau}))\big) d\lambda.
\end{align*}
From this expression we deduce the pointwise bound (relying on \eqref{boundFl})
\begin{align} \label{boundtermm}
\big \vert F_{l,M}(r) \partial_r ^l \widehat{m} \big \vert \lesssim \tau^{\frac{l}{2}-\frac{1}{2}} (r \tau^{\frac{1}{2}})^{N'} (\sinh r)^{-2M+l} e^{r(M-l)} 
\end{align}
for any integer $N'.$ 

Taking $N' = 2M-l$ we find
\begin{align*}
\big \vert F_{l,M}(r) \partial_r ^l \widehat{m} \big \vert \lesssim \tau^{M-1/2} \big(\frac{r}{\sinh r} \big)^{2M-l} e^{r(M-l)}.
\end{align*}
For the even indices $l,$ we write 
\begin{align*}
\partial_r ^l \widehat{m} = \sinh r  \partial_r \big( \frac{1}{\sinh r} \partial_r^{l-1} \widehat{m} \big) - \sinh r \partial_r \big( \frac{1}{\sinh r} \big) \partial_r ^{l-1} \widehat{m}. 
\end{align*}
Using the expression above for $l-1,$ we deduce the same pointwise bound on $F_{l,M}(r) \partial_r ^l \widehat{m}$ in this case. 

In the case where $d$ is odd this, together with \eqref{albatros1} and \eqref{decompmder} with $M=\rho$, yields the bound 
\begin{align*}
\Vert A_{\tau,k_0}^{(1)} \Vert_{L^{\infty}_r} \lesssim \tau^{\frac{d}{2}-1}.
\end{align*}
We move on to the case of even dimension. 

In the regime $r<1/\sqrt{\tau}$ we write, using \eqref{boundtermm} for $M=d/2$ and \eqref{boundrlarge}
\begin{align*}
\vert A_{\tau,k_0}^{(1)}(r) \vert \lesssim \tau^{\frac{d-1}{2}}  \int_r^{r+1} 
\bigg(\frac{s}{\sinh s} \bigg)^{d-l} e^{s(\frac{d}{2}-l)} 
\min(1,s\sqrt{\tau})  \frac{\sinh s}{\sqrt{\cosh s -\cosh r}} ds + \tau^{\frac{d-2}{4}} \int_{r+1}^{\infty} e^{-d/2} \frac{\sinh s}{\sqrt{\cosh s - \cosh r}} ds.
\end{align*}
Next we further decompose the integral into
\begin{align*}
\int_r^{\infty} \min(1,s\sqrt{\tau})  \frac{\sinh s}{\sqrt{\cosh s -\cosh r}} ds 
\lesssim \int_r ^{\frac{10}{\sqrt{\tau}}} s \sqrt{\tau} \frac{\sinh s}{\sqrt{(s-r)\sinh s} } ds +  \int_{\frac{10}{\sqrt{\tau}}}^{1}  \frac{\sinh s}{\sqrt{(s-r) \sinh s} } ds \lesssim \tau^{-1/2}.
\end{align*} 
In the regime $r\geqslant 1/\sqrt{\tau}$ we can estimate more crudely, using \eqref{boundrlarge}
\begin{align*}
\vert A_{\tau,k_0}^{(1)}(r) \vert \lesssim  \tau^{\frac{d-2}{4}} \int_r^{r+1} (\sinh s)^{-\frac{d}{2}} \frac{\sinh s}{\sqrt{(s-r)\sinh r}} ds +  \tau^{\frac{d-2}{4}} \int_{r+1}^{\infty} e^{\frac{1-d}{2}s} ds \lesssim \tau^{\frac{d}{2}-1}.
\end{align*}
Overall we obtain the same bound as in the $d$ even case.
For the second piece $\mathcal{A}_{\tau,k_0}^{(2)}$ 
In this case the flat Fourier transform of the multiplier is given by
\begin{align}\label{mhat-far}
\widehat{m}(r) = \int_{\mathbb{R}} \cos(\lambda r) \frac{\lambda^2 - \tau}{(\lambda^2 - \tau)^2 + \varepsilon^2} \big(1 - \phi \big(\frac{\lambda}{10 \sqrt{\tau}} \big) \big) \big(\chi(0) - \chi(2^{k_0}(\lambda - \sqrt{\tau})) \big) d\lambda.
\end{align}
We have two cases to consider depending on the dimension, the case of the hyperbolic plane being slightly different from the point of view of Hardy-Littlewood-Sobolev inequalities.

\underline{Case 1: $d \geqslant 3$} We use the pointwise estimate
\begin{align} \label{point-far-3}
\big \vert A_{\tau,k_0}^{(2)}(r) \big \vert \lesssim \frac{\tau^{\frac{\rho-1}{2}}}{(\sinh r)^{\rho} (r \sqrt{\tau})^{\beta} } ,  \ \ \beta \in [\rho-1,\rho+2],
\end{align} 
%3 in earlier version? 
deduced from \eqref{albatros1} and \eqref{albatros2} by performing integrations by parts in \eqref{mhat-far}.  Note that we make crucial use of the localizer $1-\phi$ at this stage, since it is necessary to be away from the principal value singularity. \\ 
Fix $2 <p \leqslant \frac{2d}{d-2}.$ Note that the upper bound guarantees that $1/p'-1/p \leqslant 2/d.$ The pointwise bound \eqref{point-far-3} implies the desired $L^{p'}-L^p$ boundedness. 
Indeed for the piece $r \leqslant 1$ we use a local version of the Hardy-Littlewood-Sobolev inequality (which relies on \eqref{radial-conv}) and the pointwise estimate \eqref{point-far-3} with $\beta = \frac{1}{2} + d \big(\frac{2}{p} - \frac{1}{2} \big).$ Indeed in that region and for this choice of exponent, \eqref{point-far-3} is upper bounded by $ \frac{\tau^{\frac{d}{2} \big(1-\frac{2}{p} \big)-1}}{r^{\frac{2d}{p}}}.$ 
The piece $r >1$ is more favorable: we use Lemma \ref{K-S} with $q = \widetilde{q} >2$. Overall we obtain that
$\Vert \mathcal{A}_{\tau,k_0}^{(2)}(D) \Vert_{L^{p'} \rightarrow L^p} \lesssim \tau^{\frac{d}{2} \big(1-\frac{2}{p} \big)-1}.$  

\underline{Case 2: $d=2$} We see from \eqref{albatros2} that
\begin{align} \label{point-far-2}
\textrm{If} \ 0 < r \leqslant 1, \ \big \vert A_{\tau,k_0}^{(2)}(r) \big \vert & \lesssim \vert \ln r \vert, \frac{1}{r^{\beta} \sqrt{\tau}^{\beta}}, \textrm{for} \ \beta \in (0,1], \\
\textrm{If} \ r>1, \ \big \vert A_{\tau,k_0}^{(2)}(r) \big \vert & \lesssim \frac{1}{ (r\tau)^2 \sqrt{\sinh r}},
\end{align}
hence using as above the Hardy-Littlewood-Sobolev inequality in the first region and Lemma \ref{K-S} in the second, we deduce that for $2 < p < \infty$
\begin{align*}
\Vert \mathcal{A}_{\tau,k_0}^{(2)} \Vert_{L^{p'} \rightarrow L^p} \lesssim \tau^{-\frac{2}{p}}.
\end{align*}

\bigskip
\noindent
\textit{Treating $\mathcal{B}_{\tau,k_0,k_1}(D)$:} The idea here is that, for $\frac{\epsilon}{\sqrt{\tau}} \ll |D-\tau| \ll \sqrt{\tau}$, one can approximate 
$$
\frac{D^2 - \tau}{(D^2 - \tau)^2 +\epsilon^2} \sim \frac{1}{2 \sqrt{\tau} (D-\sqrt{\tau})}.
$$
To formalize this, we define the approximation error after localization as 
\begin{align*}
R & := \big[ \chi(2^k(D-\sqrt{\tau})) - \chi(2^{k+1}(D-\sqrt{\tau})) \big]\frac{D^2-\tau}{(D^2-\tau)^2 + \varepsilon^2} - \frac{1}{2 \sqrt{\tau}} \big[2^k \psi(2^k(D-\sqrt{\tau})) - 2^{k+1} \psi(2^{k+1}(D-\sqrt{\tau})) \big], 
\end{align*}
where $\psi(x) = \frac{\chi(x)-\chi(0)}{x}. $ 

Therefore we have a decomposition of the localized resolvent operator into a main component and a remainder:
\begin{align*}
\big[ \chi(2^k(D-\sqrt{\tau})) - \chi(2^{k+1}(D-\sqrt{\tau})) \big]\frac{D^2-\tau}{(D^2-\tau)^2 + \varepsilon^2} =  \frac{2^{k-1}}{\sqrt{\tau}} \Psi(2^k(D-\sqrt{\tau})) + R ,
\end{align*}
where $\Psi(x) = \psi(x) - 2 \psi(2x).$ 

We further decompose the error as
\begin{align*}
R& = R_1 - R_2 \\
R_1 & :=- \big[ \chi(2^k(D-\sqrt{\tau})) - \chi(2^{k+1}(D-\sqrt{\tau})) \big] \frac{(D^2-\tau)(D-\sqrt{\tau})}{2 \sqrt{\tau} \big[ (D^2 - \tau)^2 + \varepsilon^2 \big] } \\  
R_2 & : = 2^k \Psi(2^k(D-\sqrt{\tau}))  \frac{\varepsilon^2}{2 \sqrt{\tau} \big[ (D^2 - \tau)^2 + \varepsilon^2 \big]}.
\end{align*} 
To handle the main term, we use \eqref{main-interp}:
\begin{align*}
\sum_{k = k_0}^{k_1} \bigg \Vert  \frac{2^{k-1}}{\sqrt{\tau}} \Psi(2^k(D-\sqrt{\tau})) \bigg \Vert_{L^{p'} \rightarrow L^p} \lesssim \tau^{\frac{\alpha(p,d)-1}{2}}.
\end{align*}
We can treat the remainder term $R$ using functional calculus and the bounds from Theorem \ref{thm-projector}. More precisely,
\begin{align*}
\Vert R_1 \Vert_{L^{p'} \rightarrow L^p} & \lesssim \int_0 ^{\infty} \left( |\chi(2^{k_0}(\lambda - \sqrt{\tau})) | + |\chi(2^{k_1}(\lambda - \sqrt{\tau}))| \right) \frac{1}{\sqrt{\tau}(\lambda + \sqrt{\tau})} \Vert P_{\lambda} \Vert_{L^{p'}\rightarrow\ L^p} d\lambda \\
& \lesssim \tau^{\frac{\alpha(p,d)-1}{2}} .
\end{align*}
Similarly for the second piece,
\begin{align*}
\Vert R_2 \Vert_{L^{p'} \rightarrow L^p} & \lesssim\bigg \Vert  \big(2^{k_0} \psi(2^{k_0}(D-\sqrt{\tau}))-2^{k_1} \psi(2^{k_1}(D-\sqrt{\tau})) \big) \frac{\varepsilon^2}{2 \sqrt{\tau} \big[ (D^2 - \tau)^2 + \varepsilon^2 \big]} \bigg \Vert_{L^{p'} \rightarrow L^p} \\
%& \lesssim \frac{\varepsilon}{\sqrt{\tau} \delta} \bigg \Vert   \frac{\varepsilon}{\big[ (D^2 - %\tau)^2 + \varepsilon^2 \big]} \bigg \Vert_{L^{p'} \rightarrow L^p} \\
&\lesssim  \frac{\varepsilon \tau^{\frac{\alpha(p,d)-1}{2}}}{\sqrt{\tau} \delta} .
\end{align*}
\textit{Treating $\mathcal{C}_{\tau,k_1}(D)$:} Again, this lower order term can be bounded using functional calculus for $D$ and Theorem \ref{thm-projector}:
\begin{align*}
\Vert \mathcal{C}_{\tau,k_1}(D)\Vert_{L^{p'} \rightarrow L^p} & \lesssim \int_0 ^{\infty} \bigg \vert \frac{\chi(2^{k_1}(\lambda-\sqrt{\tau}))}{\lambda^2-\tau+i\varepsilon} \bigg \vert \Vert  P_{\lambda} \Vert_{L^{p'} \rightarrow L^p} d\lambda  \lesssim \frac{\delta \tau^{\alpha(p,d)/2}}{\varepsilon}.
\end{align*}
Putting all the estimates together and choosing $\delta = \varepsilon/\sqrt{\tau},$ the desired result follows.
\subsubsection{Treating $\mathcal{I}_{\tau}(D)$} \label{imagin}
Let $\phi:\mathbb{R}^{+} \rightarrow [0,1],$ whose support is included in $[1/2,2]$ and equal to $1$ on $[2/3,3/2].$ Abusing notations, we denote $\phi$ its even extension to $\mathbb{R}.$ Next, we decompose the operator $\mathcal{I}_{\tau}(D)$ into $\mathcal{I}_{\tau}^{(1)}(D)$ and $\mathcal{I}_{\tau}^{(2)}(D)$ by introducing localizations $\phi(D/10 \sqrt{\tau})$ and $1-\phi(D/10\sqrt{\tau}).$ \\
To deal with the singular part, we write that, using functional calculus for $D,$ changing variables and using Theorem \ref{thm-projector},
\begin{align*}
\Vert \mathcal{I}_{\tau}^{(1)}(D) \Vert_{L^{p'} \rightarrow L^p} & \lesssim \int_{0}^{\infty} \frac{\varepsilon \phi \big(\frac{\lambda}{10\sqrt{\tau}}\big)}{(\lambda^2-\tau)^2 + \varepsilon^2} \Vert P_{\lambda} \Vert_{L^{p'} \rightarrow L^p} d\lambda \\
& \lesssim \int_{-\sqrt{\tau}} ^{\infty} \frac{\phi \big( \frac{\varepsilon \lambda + \sqrt{\tau}}{10\sqrt{\tau}} \big)}{1+\lambda^2(\varepsilon \lambda + \sqrt{\tau})^2} (\varepsilon \lambda + \sqrt{\tau})^{\alpha(p,d)} d\lambda \\
& \lesssim \tau^{\frac{\alpha(p,d)-1}{2}} .
\end{align*}
%To deal with the second piece, we observe that the kernel of $\mathcal{I}_{\tau}^{(2)}(D)$ denoted $I_{\tau}^{(2)}(r)$ satisfies the pointwise estimate 
%\begin{align*}
%\big \vert I_{\tau}^{(2)}(r) \big \vert \lesssim \frac{\tau^{\frac{\rho-1}{2}}}{(\sinh r)^{\rho} (r \sqrt{\tau})^{\beta} } ,  \ \ \beta \in [\rho-1,\rho+1],
%\end{align*} 
%hence from the Hardy-Littlewood-Sobolev inequality
The last piece is treated similarly, we obtain
$\Vert \mathcal{I}_{\tau}^{(2)}(D) \Vert_{L^{p'} \rightarrow L^p} \lesssim \tau^{\frac{d}{2} \big(1-\frac{2}{p} \big)-1} $ for $1/p'-1/p \leqslant 2/d.$
%\comment{
%Note that in the above regime, $$
%\frac{d}{2} \big(1-\frac{2}{p} \big)-1 \leqslant \frac{d-1}{4} \big(1 - \frac{2}{p} \big) - \frac{1}{2} .
%$$
%}
\subsection{Off-duality line}
The proof is similar to the duality line case. Therefore we only detail the parts of the argument that are different. We use the same decomposition, and keep the notations of the previous subsection.
%and decompose 
%\begin{align*}
%(D^2 - \tau + i \varepsilon)^{-1} = \mathcal{I}_{\tau}^{(1)}(D) + \mathcal{I}_{\tau}^{(2)}(D) + \mathcal{A}^{(1)}_{\tau,k_0} + \mathcal{A}^{(2)}_{\tau,k_0} + \sum_{k=k_0}^{k_1} \frac{2^{k-1}}{\sqrt{\tau}} \Psi(2^k(D-\sqrt{\tau})) + R_1 +R_2 + \mathcal{C}_{\tau,k_1}.
%\end{align*}
%More precisely we have
%\begin{align*}
%\Vert \mathcal{I}_{\tau}^{(1)}(D)  \Vert_{L^{s} \rightarrow L^q} \lesssim  ,   \Vert \mathcal{I}_{\tau}^{(2)}(D)  \Vert_{L^{s}\rightarrow L^q} \lesssim , \Vert \mathcal{A}^{(2)}_{\tau,k_0}  \Vert_{L^{s}\rightarrow L^q} \lesssim \tau^{-1+\frac{d}{4}\big(\frac{1}{p} - \frac{1}{q} \big)} , \Vert R_1 \Vert_{L^{s}\rightarrow L^q} \lesssim, \Vert R_2 \Vert_{L^{s}\rightarrow L^q} \lesssim  , \Vert \mathcal{C}_{\tau,k_0,k_1} \Vert_{L^s \rightarrow L^q} \lesssim .
%\end{align*}
To bound the singular part, we write (see \cite{G} for a similar reasoning in the euclidean case) using Plancherel's theorem and Theorem \ref{thm-projector} that
\begin{align*}
\Vert 2^k \Psi(2^k(\lambda-\sqrt{\tau})) \widetilde{f}(\lambda,\omega) \Vert_{L^2}^2 & =2^{2k} \int_{0}^{\infty} \vert \Psi(2^k(\lambda-\sqrt{\tau})) \vert^2 \Vert R_{\lambda} f \Vert_{L^2_{\omega}}^2 d\lambda \\
& \lesssim 2^k \Vert f \Vert_{L^{p'}}^2 \left\{ \begin{array}{ll}
\tau^{-\frac{d+1}{2}+\frac{d}{p'}} & \mbox{if $p \geq p_{ST} = \frac{2(d+1)}{d-1}$} \\
\tau^{\frac{d-1}{2}\big(\frac{1}{p'}-\frac{1}{2} \big)} & \mbox{if $2 < p \leq p_{ST}$}
\end{array} \right. .
\end{align*}
Next we interpolate this inequality with the $(L^1-L^{\infty})$ estimate that is a direct consequence of Lemma \ref{busard}, and obtain that for $1 \leqslant s \leqslant \frac{q}{q-1}, 2 \leqslant q,$
\begin{align*}
\Vert 2^k \Psi(2^k(D-\sqrt{\tau})) \Vert_{L^{s} \rightarrow L^q} \lesssim \frac{2^{\frac{k}{q}}}{\big(\sinh (c 2^k) \big)^{\rho\big(1-\frac{2}{q} \big)}}
\left\{ \begin{array}{ll}
\tau^{\frac{d}{2}\big(\frac{1}{s} - \frac{1}{2} \big)-\frac{1}{4}} & \mbox{if $\frac{2s'}{q} \geq p_{ST}$} \\
 \tau^{\frac{\rho}{2} \big(\frac{1}{s} - \frac{1}{q} \big)} & \mbox{if $2 < \frac{2s'}{q} \leq p_{ST}$}
\end{array} \right. .
\end{align*}
%where $s = \frac{pq}{pq-2}.$ 
To conclude we write as done above that if $2<q\leqslant \frac{2d}{d-1},$ then
\begin{align*}
\Vert \mathcal{B}_{\tau,k_0,k_1}(D) \Vert_{L^s \rightarrow L^p} & \lesssim \sum_{k \geqslant k_0}^{k_1} \frac{2^{k-1}}{\sqrt{\tau}} \big \Vert \Psi(2^k(D-\sqrt{\tau})) \big \Vert_{L^{s} \rightarrow L^q} \\
& \lesssim \left\{ \begin{array}{ll}
\tau^{\frac{d}{2}\big(\frac{1}{s}-\frac{1}{2} \big)-\frac{3}{4}} & \mbox{if $\frac{2s'}{q} \geq p_{ST}$} \\
\tau^{\frac{\rho}{2}\big(\frac{1}{s}-\frac{1}{q} \big)-\frac{1}{2}} & \mbox{if $2 < \frac{2s'}{q} \leq p_{ST}$}
\end{array} \right. .
\end{align*}
If $q>\frac{2d}{d-1},$ then
\begin{align*}
\Vert \mathcal{B}_{\tau,k_0,k_1}(D) \Vert_{L^s \rightarrow L^p} & \lesssim \left\{ \begin{array}{ll}
\tau^{\frac{d}{2}\big(\frac{1}{s} - \frac{1}{q}  \big)-1} & \mbox{if $\frac{2s'}{q} \geq p_{ST}$} \\
\tau^{\frac{\rho}{2}\big( \frac{1}{s}-\frac{1}{q} \big)+\frac{d}{2}\big( \frac{1}{2}-\frac{1}{q} \big)-\frac{3}{4}} & \mbox{if $2 < \frac{2s'}{q} \leq p_{ST}$}
\end{array} \right. .
\end{align*}
%Similarly, we have
%\begin{align*}
%\Vert \mathcal{A}^{(1)}_{\tau,k_0}  \Vert_{L^{s}\rightarrow L^q} \lesssim
%\end{align*}
%therefore after interpolating with the estimate $\Vert \mathcal{A}_{\tau,k_0}^{(1)} \Vert_{L^1 \rightarrow L^{\infty}} \lesssim \tau^{\frac{d}{2}-1},$ we obtain
%\begin{align*}
%\end{align*}
%This concludes the proof.
The lower half of Figure \ref{figure1} is obtained by duality.

\begin{remark}
This same reasoning can be applied to spectral projectors. It yields slightly different bounds than above, where we relied on a duality trick.  More precisely, we have, for the regions defined in Figure \ref{figure1}:
\begin{itemize}
\item \textit{Region I:} $\Vert P_{\Lambda} \Vert_{L^{s} \rightarrow L^q} \lesssim [(q-2)^{-1}+1][(s'/q-1)^{-1} +1]^{2/q} \Lambda^{\rho \big(\frac{1}{s} - \frac{1}{q} \big)}$
\item \textit{Region II:} $\Vert P_{\Lambda} \Vert_{L^{s} \rightarrow L^q} \lesssim [(s'/q-1)^{-1} +1]^{2/q} \Lambda^{\rho \big(\frac{1}{s} - \frac{1}{q} \big)+d \big( \frac{1}{2} - \frac{1}{q} \big)-\frac{1}{2}}$
\item \textit{Region III:} $\Vert P_{\Lambda} \Vert_{L^{s} \rightarrow L^q} \lesssim \Lambda^{d \big(\frac{1}{s} - \frac{1}{q} \big)-1}$
\item \textit{Region IV:} $\Vert P_{\Lambda} \Vert_{L^{s} \rightarrow L^q} \lesssim [(q-2)^{-1}+1] \Lambda^{d \big(\frac{1}{s} - \frac{1}{2} \big)-\frac{1}{2}} .$
\end{itemize}
\end{remark}

\subsection{Derivative of the resolvent} Finally, we consider bounds on the derivative of the resolvent (they will be useful in the application to the electromagnetic perturbation of the Laplacian operator in Section~\ref{applications}).

\begin{theorem}[Boundedness of the derivative of the resolvent] \label{thm-Dresolvent}
Let $z:= \tau + i \varepsilon \in \mathbb{C} \setminus \lbrace 0 \rbrace$, with $\epsilon>0$. For $1\leq s \leq q \leq \infty$, for regions I, II, III, IV defined in Figure~\ref{figure2}, and for implicit constants independent of $\epsilon$,
\begin{itemize}
\item {Region $I:$} $\Vert D (D^2-z)^{-1} \Vert_{L^s \rightarrow L^q} \lesssim \tau^{\frac{\rho}{2}\big(\frac{1}{s}-\frac{1}{q} \big)},$
\item {Region $II:$} $\Vert D (D^2-z)^{-1} \Vert_{L^s \rightarrow L^q} \lesssim \tau^{\frac{\rho}{2}\big(\frac{1}{s}-\frac{1}{q} \big)+\frac{d}{2}\big(\frac{1}{2} - \frac{1}{q} \big) - \frac{1}{4}} . $
\item {Region $IV:$}  $\Vert D (D^2-z)^{-1} \Vert_{L^s \rightarrow L^q} \lesssim \tau^{\frac{d}{2}\big(\frac{1}{s} - \frac{1}{2} \big)-\frac{1}{4}}.$
\end{itemize}
In the particular case where the exponents are in duality, we have 
For $2< p \leqslant \frac{2d}{d-1},$
\begin{align*}
\Vert D (D^2-z)^{-1}  \Vert_{L^{p'} \rightarrow L^{p}} \lesssim \tau^{\frac{d-1}{2}(\frac{1}{2}-\frac{1}{p})},
\end{align*}
where the implicit constant does not depend on $\varepsilon,$ but may depend on $p.$
\end{theorem}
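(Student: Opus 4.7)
The plan is to follow the proof of Theorem \ref{thm-resolvent} verbatim, with bookkeeping adjustments to track the effect of the extra factor of $D$. I start by splitting
$$
D(D^2-z)^{-1} = \underbrace{\frac{D(D^2-\tau)}{(D^2-\tau)^2+\varepsilon^2}}_{=: \widetilde{\mathcal{R}}_\tau(D)} + i\underbrace{\frac{D\varepsilon}{(D^2-\tau)^2+\varepsilon^2}}_{=:\widetilde{\mathcal{I}}_\tau(D)},
$$
and perform the same dyadic decomposition in the spectral variable around $\lambda=\sqrt{\tau}$ at scales $2^{k_0}\sim \tau^{-1/2}$ and $2^{k_1}\sim \delta^{-1}$, writing $\widetilde{\mathcal{R}}_\tau(D) = \widetilde{\mathcal{A}}_{\tau,k_0}(D)+\widetilde{\mathcal{B}}_{\tau,k_0,k_1}(D)+\widetilde{\mathcal{C}}_{\tau,k_1}(D)$.

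The crux is the analysis of $\widetilde{\mathcal{B}}_{\tau,k_0,k_1}$, the portion localized at scale $|D-\sqrt\tau|\sim 2^{-k}$ for $k\in[k_0,k_1]$. On the support of $\chi(2^k(D-\sqrt\tau))-\chi(2^{k+1}(D-\sqrt\tau))$ one has $D=\sqrt\tau+O(2^{-k})$, so
$$
\frac{D(D^2-\tau)}{(D^2-\tau)^2+\varepsilon^2}
\;=\; \frac{1}{2(D-\sqrt\tau)} \;+\; \widetilde R,
$$
where $\widetilde R$ contains an analogue of $R_1,R_2$ from the resolvent proof, treated term-by-term by functional calculus using Theorem \ref{thm-projector}. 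The key point is the missing $\frac{1}{2\sqrt\tau}$: compared to $\mathcal{B}_{\tau,k_0,k_1}$, the main dyadic piece $2^{k-1}\Psi(2^k(D-\sqrt\tau))$ is replaced by $2^{k-1}\Psi(2^k(D-\sqrt\tau))$ without the $\tau^{-1/2}$ prefactor, which produces an overall gain of $\sqrt\tau$ in all bounds. In particular, summing the interpolation estimate \eqref{main-interp} against $2^k$ instead of $2^k/\sqrt\tau$ yields, on the duality line for $2<p\leq p_{ST}$, the bound $\tau^{\alpha(p,d)/2}=\tau^{\frac{d-1}{2}(\frac12-\frac1p)}$. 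Off the duality line, the same two-parameter interpolation of $\Vert 2^k\Psi(2^k(D-\sqrt\tau))\Vert_{L^s\to L^q}$ between Plancherel-plus-Theorem~\ref{thm-projector} and the $L^1\to L^\infty$ kernel bound of Lemma~\ref{busard} produces exactly the exponents announced for regions I, II, IV (the change $\frac{2^{k}}{\sqrt\tau}\leadsto 2^k$ shifts every resolvent exponent up by $\frac12$, matching the stated formulas).

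The remaining pieces are softer. The far-from-singularity piece $\widetilde{\mathcal{A}}_{\tau,k_0}$ is split via a smooth cutoff $\phi(D/10\sqrt\tau)$ into a low-frequency part treated by Plancherel combined with pointwise kernel bounds on $\widetilde A^{(1)}_{\tau,k_0}$ (Taylor-expanding $\widehat m$ in $r$ up to order $2\rho$ as in Case 1 of Theorem~\ref{thm-resolvent}, now yielding $\Vert \widetilde A^{(1)}_{\tau,k_0}\Vert_{L^\infty_r}\lesssim \tau^{d/2-1/2}$ because of the extra $\lambda$ factor), and a high-frequency part for which integration by parts in $\widehat m$ gives a pointwise bound of the form \eqref{point-far-3}/\eqref{point-far-2} with an extra $\sqrt\tau$. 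Both are then plugged into Hardy-Littlewood-Sobolev (short range) and the Kunze--Stein lemma (long range). The lower-order piece $\widetilde{\mathcal C}_{\tau,k_1}$ and the imaginary part $\widetilde{\mathcal I}_\tau(D)$ are controlled exactly as in Section \ref{imagin} by functional calculus and Theorem \ref{thm-projector}, the single change being an extra factor $\lambda$ inside the integral, which again contributes $\sqrt\tau$. Choosing $\delta=\varepsilon/\sqrt\tau$ as before closes the argument and the lower half of Figure~\ref{figure2} follows by duality (using $D(D^2-\bar z)^{-1}$ as adjoint).

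The main obstacle I anticipate is keeping consistent the interpolation endpoints for each of the regions I, II, IV: one must check that the interpolation carried out for $\widetilde{\mathcal{B}}_{\tau,k_0,k_1}$ indeed saturates in the correct sub-region and that the remainders $\widetilde R_1,\widetilde R_2$ and the boundary terms $\widetilde{\mathcal{A}}$, $\widetilde{\mathcal{C}}$ are strictly subdominant there. The absence of a region-III bound in the statement reflects that the extra factor of $D$ destroys the decay $\tau^{\frac d2(\frac1s-\frac1q)-1}$ available for the resolvent (one loses precisely the $\sqrt\tau$ that had come from the $\tau^{-1/2}$ prefactor), so no improvement over the neighboring regions is available there.
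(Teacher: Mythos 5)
Your proposal is correct and reaches the announced bounds, but you take a noticeably more laborious route than the paper. After the identical dyadic decomposition, the paper handles the singular piece with the single algebraic identity
\[
D\,\frac{D^2-\tau}{(D^2-\tau)^2+\varepsilon^2}
=\frac{(D-\sqrt\tau)^2(D+\sqrt\tau)}{(D^2-\tau)^2+\varepsilon^2}
+\sqrt\tau\,\frac{D^2-\tau}{(D^2-\tau)^2+\varepsilon^2},
\]
i.e.\ $D=(D-\sqrt\tau)+\sqrt\tau$: the second term is exactly $\sqrt\tau$ times the already-controlled resolvent, and the first is non-singular because $(D-\sqrt\tau)^2$ kills the pole. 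This sidesteps the need to repeat the $\Psi(2^k(D-\sqrt\tau))$ dyadic summation with new bookkeeping. You instead recompute the approximation $\frac{D(D^2-\tau)}{(D^2-\tau)^2+\varepsilon^2}\approx \frac{1}{2(D-\sqrt\tau)}$ and resum the main-interp estimate with $2^k$ in place of $2^k/\sqrt\tau$; this is correct (and numerically equivalent: your $\frac{1}{2(D-\sqrt\tau)}=\sqrt\tau\cdot\frac{1}{2\sqrt\tau(D-\sqrt\tau)}$), but it duplicates work the paper simply inherits. Your treatment of $\widetilde{\mathcal A}^{(1)}$, $\widetilde{\mathcal A}^{(2)}$, $\widetilde{\mathcal C}$ and $\widetilde{\mathcal I}_\tau$ matches the paper's: the extra $\lambda\sim\sqrt\tau$ factor on the relevant supports uniformly upgrades all non-singular bounds by $\sqrt\tau$, yielding $\|A^{(1)}\|_{L^\infty_r}\lesssim\tau^\rho$, $\|\widetilde{A^{(1)}}\|_{L^\infty_\lambda}\lesssim\tau^{-1/2}$, and $|A^{(2)}(r)|\lesssim(\sinh r)^{-\rho}r^{-\rho}$.

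One small point is slightly off: your heuristic for the disappearance of Region~III (loss of the $\tau^{-1/2}$ prefactor making the Region~III rate no longer an improvement) is not quite the mechanism. The real obstruction is a hard constraint coming from $\widetilde{\mathcal A}^{(2)}$: the derivative's far-from-singularity kernel behaves like $r^{1-d}$ near $r=0$ (versus $r^{2-d}$ for the resolvent), so the Hardy--Littlewood--Sobolev step is bounded only when $\frac{1}{s}-\frac{1}{q}\le\frac{1}{d}$ rather than $\frac{2}{d}$. This shifts the green line in Figure~\ref{figure2} and eliminates the region outright, independently of what the singular part would have given there. This doesn't affect the parts of the theorem you were asked to prove, but you should be aware that the off-duality regions are dictated by admissibility constraints on the non-singular pieces as much as by saturation in the singular piece.
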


\begin{figure}[!]
\centering
\begin{tikzpicture}[scale=0.8]
\draw[->] (-5,0) -- (5.3,0) ;
\draw[->] (-5,0) -- (-5,10.3) ;
\draw (4.7,-0.3) node{$1/s$} ;
\draw (-5.6,10) node{$1/q$} ; 
\draw (5,0) -- (5,10) ;
\draw (-5,10) -- (5,10); 
\draw[red,dashed] (5,0)--(1.2505,3.75) ;
\draw[red,dashed] (0,5)--(-5,10) ;
\draw[blue,dashed] (0,5)--(5,5) ;
\draw[blue,dashed] (0,5)--(0,0) ;
%\draw[green,dashed] (5,6.66)--(-1.66,0) ;
\draw[green,dashed] (5,7.49)--(-1.66,0.833) ;
\draw (-5,4.16) node{$\bullet$} ;
\draw (-5.4,4.16) node{$\frac{d-1}{2d}$} ;
\draw[yellow,dashed] (-5,4.16)--(0,4.16) ;
\draw[yellow,dashed] (0.84,4.16)--(5,4.16) ;
\draw[purple,dashed] (5,0)--(0,7) ;
\draw[purple,dashed] (5,0)--(-2,5);
\draw[yellow,dashed] (0.84,0)--(0.84,4.16);
\draw[yellow,dashed] (0.84,5)--(0.84,7);
\draw (0,5) node{$\circ$};
\draw (0.5,4.5) node[scale=0.5]{$I$};
\draw (2,4.7) node[scale=0.5]{$IV$};
%\draw (2.2,4) node{$III$};
\draw (1.1,3.9) node[scale=0.5]{$II$};
%\draw (0.95,2.6) node{$III$};
\draw (0.3,3) node[scale=0.5]{$IV$};
\end{tikzpicture}
\caption{\label{figure2} Boundedness of $D(D^2-\tau-i \varepsilon)^{-1}$. The equations of the lines in this figure are as follows (using the convention that a line and its symmetric with respect to the diagonal $1/q+1/s=1$ have the same color). 
Yellow line: $\frac{1}{q} = \frac{d-1}{2d}$;  Green line: $\frac{1}{q} - \frac{1}{s} = \frac{1}{d}$; Purple line: $\frac{d-1}{d+1} \frac{1}{q} + \frac{1}{s} = 1. $}
\end{figure}
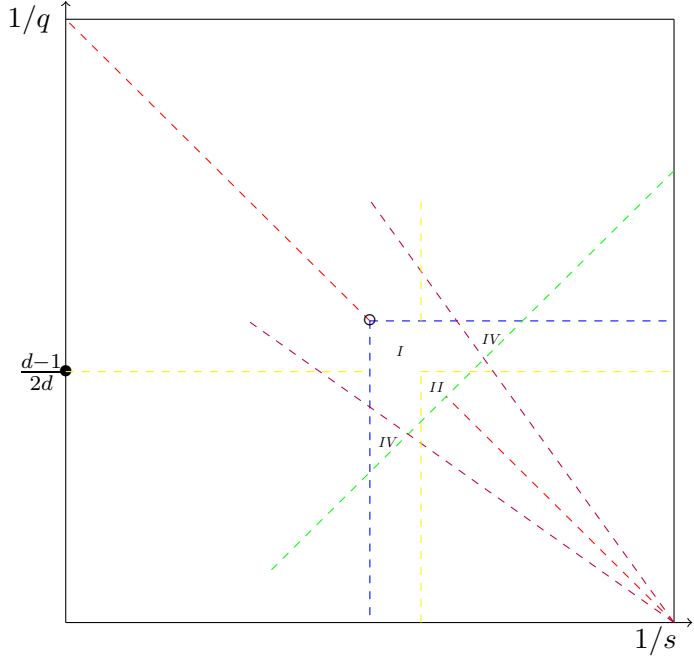

\begin{remark} Comparing the above with the statement of Theorem~\ref{thm-resolvent}, there is no region $III$ due to the green line being shifted upward. This reflects the fact that such estimates are not available in the Euclidean case.
\end{remark}

\begin{proof} The proof is similar to the above, therefore we only sketch the argument. The decomposition of the kernel is identical, and similar considerations lead to the following estimates for the non singular part of the kernel:
\begin{align*}
\Vert A^{(1)}_{\tau,k_0}  \Vert_{L^{\infty}_r} \lesssim \tau^{\rho}, \Vert \widetilde{A^{(1)}_{\tau,k_0}} \Vert_{L^{\infty}_{\lambda}} \lesssim \tau^{-\frac{1}{2}},
\end{align*}
which leads to 
\begin{align*}
\Vert \mathcal{A}_{\tau,k_0}^{(1)} \Vert_{L^{p'}\rightarrow L^p} \lesssim \tau^{\frac{d}{2}\big(1-\frac{2}{p} \big)-\frac{1}{2}}.
\end{align*}
Using the pointwise estimate
\begin{align} \label{pt-far-der}
\big \vert A^{(2)}_{\tau,k_0}(r) \big \vert \lesssim \frac{1}{(\sinh r)^{\rho} r^{\rho} }, 
\end{align}
we obtain that by the Hardy-Littlewood-Sobolev inequality $ \Vert \mathcal{A}^{(2)}_{\tau,k_0} (D) \Vert_{L^{p'}\rightarrow L^p} \lesssim 1,$ for $1/p' - 1/p \leqslant 1/d$ in the region $0<r <1.$ When $r \geqslant 1$ we rely on Lemma \ref{K-S}, and we see that a similar estimate holds. \\
% Note that unlike in the previous estimate we do not need to produce decay on $\tau$ since the exponent in the singular region is positive.
To deal with the singular part, we simply write that 
\begin{align*}
D \frac{D^2-\tau}{(D^2-\tau)^2+\varepsilon^2} = \frac{(D-\sqrt{\tau})^2(D+\tau)}{(D^2-\tau)^2+\varepsilon^2} + \sqrt{\tau} \frac{D^2-\tau}{(D^2-\tau)^2+\varepsilon^2}
\end{align*}
where the first term is treated as a remainder term due to the cancellation of the singularity, and the boundedness for the second term directly follows from estimates proved above for the resolvent. 
\end{proof}

\section{Boundedness of the Fourier extension operator} 
Our aim in this section is to study the boundedness of the operator
$$
E_\lambda: L^p(\mathbb{S}^{d-1}) \to L^q(\mathbb{H}^d)
$$
for $p \neq 2$ - as we saw, the case $p=2$ is equivalent to that of the boundedness of the spectral projectors, and was fully analyzed in Section~\ref{pf-restr}.

\subsection{Isometries and Fourier transform} \label{isom}
For $x\in \mathbb{H}^d$, $\omega \in \mathbb{S}^{d-1}$, let
\begin{align*}
\langle x ,\omega \rangle = - \log [x,b(\omega)],
\end{align*}
so that
$$
h_{\lambda,\omega}(x) = e^{(\rho - i \lambda) \langle x,\omega \rangle}.
$$
We can define an action of $SO(d,1)$ on $\mathbb{S}^{d-1}$ as follows: first identify $\omega \in \mathbb{S}^{d-1}$ and $b(\omega)$ on the cone of $\mathbb{R}^{d+1}$; then, if $U \in SO(d,1)$, set 
$$
U \omega = \omega', \qquad \mbox{with} \qquad b(\omega') = \frac{1}{(Ub(\omega))_0} Ub(\omega).
$$
Using the identity 
\begin{align*}
\langle U z , U \omega \rangle = \langle z , \omega \rangle + \langle U \textbf{0} , U \omega \rangle
\end{align*}
from~\cite{Bray}, equalities (2.1)-(2.3), we see that
\begin{align*}
\widetilde{f \circ U}(\lambda,\omega) &= \int_{\mathbb{H}^d} f(Ux) e^{(-i\lambda + \rho)\langle x , \omega \rangle} dx \\
&=  \int_{\mathbb{H}^d} f(x) e^{(-i\lambda + \rho)\langle U^{-1} x , \omega \rangle} dx \\
&= \int_{\mathbb{H}^d} f(x) e^{(-i\lambda + \rho)\big(\langle x ,  U \omega \rangle + \langle U^{-1} \textbf{0} , \omega \rangle  \big)} dx \\
&=[U^{-1} \textbf{0}, b(\omega)]^{i\lambda-\rho} \widetilde{f}(\lambda,U\omega).
\end{align*}

\subsection{Lower bounds for the Fourier extension operator: proof of Proposition~\ref{proplb}}
We gather a few observations which together give the proof of this proposition. We use successively the symmetry group of $\mathbb{H}^d$ and both examples from Section \ref{lower-b}.
 
\subsubsection{Lorentz boosts}
Consider
$U = \left( \begin{array}{lll}
\cosh t & \sinh t & 0 \\
\sinh t & \cosh t & 0 \\
0 & 0 & I_{d-1}
\end{array} \right) .
$
We can parametrize the sphere $\mathbb{S}^{d-1} \subset \mathbb{R}^d$ by $(\theta_1,\sigma) \in [0,\pi] \times \mathbb{S}^{d-2}$, and the formula $\omega = (\cos \theta_1, \sin \theta_1 \sigma)$. A small computation shows that
$$
U \omega = \frac{1}{\cosh t + \sinh t \cos \theta_1} 
\begin{pmatrix}
\sinh t + \cosh t \cos \theta_1 \\ \sin \theta_1 \sigma 
\end{pmatrix}.
$$
Introducing the new variable $\theta_1 '$ defined as
$$
\begin{pmatrix}
\cos \theta'_1 \\ \sin \theta'_1
\end{pmatrix} = \frac{1}{\cosh t + \sinh t \cos \theta_1} 
\begin{pmatrix}
\sinh t + \cosh t \cos \theta_1 \\ \sin \theta_1
\end{pmatrix},
$$
let
\begin{align*}
\omega' = U \omega =  \begin{pmatrix}
 \cos \theta_1' \\ \sin \theta_1' \sigma 
\end{pmatrix}.
\end{align*}
An elementary computation shows that
$$
\frac{d\theta'_1}{d\theta_1} = \frac{1}{\cosh t + \sinh t \cos \theta_1} = [U^{-1} \mathbf{0},b(\omega)]^{-1}.
$$
As a consequence, the volume element on the sphere
becomes 
$$
d\omega = (\sin \theta_1)^{d-2} d\theta_1\,d\sigma = [U^{-1} \textbf{0},b(\omega)]^{d-1} (\sin \theta_1')^{d-2} d\theta_1' d\sigma = [U^{-1} \textbf{0},b(\omega)]^{d-1}  d\omega'.
$$
We also note that
$$
\cos \theta_1 = \frac{\sinh t - \cos \theta'_1 \cosh t}{\sinh t \cos \theta'_1 - \cosh t},
$$
which leads to the equivalent, as $t \to \infty$,
$$
[U^{-1} \mathbf{0},b(\omega)] = \cosh t + \sinh t \cos \theta_1 \sim e^{-t} \frac{2}{1 - \cos \theta'_1} \to 0
$$
except for $\theta'_1 = 0$.
\\
We learn from  Section \ref{isom} that
$$
(E_\lambda g) \circ U = E_\lambda([U^{-1} \textbf{0}, b(\omega)]^{i\lambda-\rho}  g \circ U).
$$
Therefore, 
$$
\| E_\lambda \|_{L^p \to L^q} \geq \frac{\| (E_\lambda g) \circ U \|_{L^q(\mathbb{H}^{d})} } 
{ \| [U^{-1} \textbf{0}, b(\omega)]^{-\rho} g \circ U \|_{L^p(\mathbb{S}^{d-1})}}.
$$
Since $U$ is an isometry, the numerator is independent of $\| U \|$. As for the denominator we can write, changing variables from $\omega$ to $\omega'$ that
\begin{align*}
\|[U^{-1} \textbf{0}, b(\omega)]^{-\rho} g \circ U \|_{L^p(\mathbb{S}^{d-1})}^p &= \int_{\mathbb{S}^{d-1}}  [U^{-1} \textbf{0}, b(\omega)]^{-\rho p} |g (U \omega)|^p \,d\omega \\
& = \int_{\mathbb{S}^{d-1}} [U^{-1} \textbf{0}, b(\omega)]^{2 \rho-\rho p} \vert g (\omega') \vert^p \,d\omega'.
\end{align*}
But as $t \to \infty$, $[U^{-1} \textbf{0}, b(\omega)] = \cosh t + \sinh t \cos \theta \to 0$ almost everywhere in $\omega'$, which implies that
$$
\|[U^{-1} \textbf{0}, b(\omega)]^{-\rho} g \circ U \|_{L^p(\mathbb{S}^{d-1})} \to 0
$$
if $p<2$.
This shows that $L^p \to L^q$ boundedness requires $p \geq 2$.
\subsubsection{The radial example} Testing $E_\lambda$ on the test function $1$ implies that
$$
\| E_\lambda \|_{L^p \to L^q} \geq \frac{| \textbf{c}(\lambda)|^{-1} \| \Phi_\lambda \|_{L^q(\mathbb{H}^{d})}} {\| 1 \|_{L^p(\mathbb{S}^{d-1})}} \gtrsim \left\{
\begin{array}{ll} \infty & \mbox{if $q\leq 2$} \\
1 & \mbox{if $2 < q \leq \frac{2d}{d-1}$} \\
\lambda^{\rho-\frac d q} & \mbox{if $ q > \frac{2d}{d-1}$}
\end{array}
\right. .
$$
\subsubsection{The Knapp example} Testing $E_\lambda$ on the test function $\varphi_\delta$ implies that, if $\delta < \lambda^{-1/2}$,
$$
\| E_\lambda \|_{L^p \to L^q} \geq \frac{\| E_{\lambda} \varphi_\delta \|_{L^q(\mathbb{H}^{d})}} {\| \varphi_\delta \|_{L^p(\mathbb{S}^{d-1})}} \gtrsim (q-2)^{-1/2} \lambda^{\frac{\rho}{2} - \frac{\rho}{q}} \delta^{\rho - \frac{d-1}{p}}.
$$
The right-hand side is maximal for $\delta = \lambda^{-1/2}$, which gives
$$
\| E_\lambda \|_{L^p \to L^q} \gtrsim (q-2)^{-1/2}  \lambda^{\frac \rho p - \frac \rho q}.
$$

\section{Applications} \label{applications}
\subsection{Boundedness for small frequencies}
For our applications we will need a version of Theorems \ref{thm-projector} and \ref{thm-resolvent} for $0<\Lambda, \tau\leqslant 1.$ 
\begin{lemma} \label{small-freq}
Let $p > 2,  \Lambda>0, $ Then
\begin{align*}
\Vert P_{\Lambda} \Vert_{L^{p'}\rightarrow L^p} \lesssim \Lambda^2.
\end{align*}
Moreover if $p< \infty$ and $1-\frac{2}{p} \leqslant \frac{2}{d}$ then
\begin{align*}
\sup_{\varepsilon>0, \tau \in [0,1]} \Vert (D^2 - \tau - i \varepsilon)^{-1} \Vert_{L^{p'}\rightarrow L^p} \lesssim 1.
\end{align*}
Finally if $1-\frac{2}{p} \leqslant \frac{1}{d}$ then
\begin{align*}
\sup_{\varepsilon>0, \tau \in [0,1]} \Vert D(D^2 - \tau - i \varepsilon)^{-1} \Vert_{L^{p'} \rightarrow L^p} \lesssim 1.
\end{align*}
\end{lemma}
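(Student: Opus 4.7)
My plan is to prove the three estimates by a common strategy: each operator is a Fourier multiplier with radial convolution kernel, and one controls this kernel by combining a local Hardy-Littlewood-Sobolev bound near the origin with the Kunze-Stein inequality (Lemma~\ref{K-S}) at infinity.

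I would first attack the spectral projector estimate. The kernel of $P_\Lambda$ is $\kappa_\Lambda(r)=|c(\Lambda)|^{-2}\Phi_\Lambda(r)$. From $c(\lambda)\sim C/(i\lambda)$ as $\lambda\to 0$ one has $|c(\Lambda)|^{-2}\lesssim \Lambda^2$ for small $\Lambda$, while the Harish-Chandra estimate gives the uniform bound $|\Phi_\Lambda(r)|\lesssim (1+r)e^{-\rho r}$. Applying Lemma~\ref{K-S} with $q=\tilde{q}=p$ (so $Q=p/2$ and $\mu=1$) should yield
\begin{align*}
\|P_\Lambda f\|_{L^p} \lesssim \Lambda^2 \|f\|_{L^{p'}} \bigg( \int_0^\infty (\sinh r)^{d-1}(1+r)^{p/2+1} e^{-\rho r(p/2+1)}\,dr \bigg)^{2/p},
\end{align*}
and the inner integral converges for $p>2$ since $(\sinh r)^{d-1}\lesssim e^{2\rho r}$.

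For the resolvent estimate, I would split with a smooth even cutoff $\psi$ equal to $1$ on $[-2,2]$ and supported in $[-3,3]$. For the high-frequency part $(1-\psi(D))(D^2-\tau-i\varepsilon)^{-1}$, the symbol is smooth of order $-2$ uniformly in $\tau\in[0,1]$, $\varepsilon>0$; via \eqref{albatros1}--\eqref{albatros2} and integration by parts its convolution kernel should satisfy $|K_h(r)|\lesssim r^{-(d-2)}$ for $r\leq 1$ ($\log(1/r)$ when $d=2$) and $|K_h(r)|\lesssim e^{-\rho r}$ for $r\geq 1$; the near piece is then treated by a local Hardy-Littlewood-Sobolev inequality, which is where the condition $1-2/p\leq 2/d$ enters, and the far piece by Lemma~\ref{K-S}. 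For the low-frequency part, I would decompose $\psi(D)(D^2-\tau-i\varepsilon)^{-1}=\psi(D)\mathcal{R}_\tau(D)+i\psi(D)\mathcal{I}_\tau(D)$ as in Section~\ref{pf-restr}. The imaginary part should be bounded by the triangle inequality in operator norm together with the first assertion: since $\|P_\lambda\|_{L^{p'}\to L^p}\lesssim \lambda^2$ for $\lambda\in(0,1]$ and $\lesssim 1$ for $\lambda\in[1,3]$ by Theorem~\ref{thm-projector}, the change of variable $u=\lambda^2$ gives
\begin{align*}
\|\psi(D)\mathcal{I}_\tau(D)\|_{L^{p'}\to L^p} \lesssim \int_0^9 \frac{\varepsilon\sqrt{u}}{(u-\tau)^2+\varepsilon^2}\,du \lesssim 1.
\end{align*}
For the real part, I would adapt the dyadic decomposition around $\lambda=\sqrt{\tau}$ from the proof of Theorem~\ref{thm-resolvent}, which now involves only finitely many scales $2^k\lesssim 1$; each scale is handled by \eqref{main-interp} together with the improved small-frequency spectral projector bound.

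The derivative estimate is parallel, with the only change being that the high-frequency symbol is now of order $-1$, producing a kernel with $r^{-(d-1)}$ singularity at the origin and thus requiring the stronger condition $1-2/p\leq 1/d$ for the local Hardy-Littlewood-Sobolev step. The hardest part of the argument will be the real part of the low-frequency resolvent: the multiplier $(\lambda^2-\tau)/((\lambda^2-\tau)^2+\varepsilon^2)$ is only principal-value integrable near $\lambda=\sqrt{\tau}$, so the uniform bound in $\varepsilon$ cannot be obtained by a triangle inequality on operator norms and will genuinely require the dyadic cancellation of Section~\ref{pf-restr}.
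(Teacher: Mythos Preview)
Your treatment of the spectral projector is essentially the paper's: pointwise control of the kernel via the Harish--Chandra estimate $|\Phi_\Lambda(r)|\lesssim(1+r)e^{-\rho r}$ combined with $|c(\Lambda)|^{-2}\lesssim\Lambda^2$, then Kunze--Stein. This part is fine.

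For the resolvent and its derivative, your route is considerably more elaborate than the paper's. The paper does \emph{not} split into real and imaginary parts, nor does it invoke any dyadic decomposition. It simply splits at $\lambda=2\sqrt{\tau}$: for $\lambda\geq 2\sqrt{\tau}$ it reuses the pointwise kernel bounds~\eqref{point-far-3}--\eqref{point-far-2} (with $\beta=\rho-1$) and~\eqref{pt-far-der}; for $\lambda<2\sqrt{\tau}$ it appeals to a direct uniform pointwise bound $\min\{r^{2-d},e^{-\rho r}\}$ (respectively $\sqrt{\tau}\min\{r^{2-d},e^{-\rho r}\}$ for the derivative) on the kernel, then applies HLS near the origin and Kunze--Stein at infinity, exactly as you propose for your high-frequency piece. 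Thus the step you flag as ``the hardest part'' is in the paper a one-line pointwise kernel estimate, with no cancellation argument needed.

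Your description of the dyadic step for the real part is also not correct as stated. For $\tau\leq 1$ one has $2^{k_0}\sim\tau^{-1/2}\geq 1$, and the scales run up to $2^{k_1}\sim\sqrt{\tau}/\varepsilon\to\infty$ as $\varepsilon\downarrow 0$; hence there are \emph{infinitely} many scales, all satisfying $2^k\gtrsim 1$, not ``finitely many scales $2^k\lesssim 1$''. The dyadic sum does converge uniformly in $(\tau,\varepsilon)$---each scale lives at $r\sim 2^k\geq 1$ in physical space and the factor $(\sinh c2^k)^{-(1-2/p)\rho}$ in~\eqref{main-interp} gives exponential decay in $2^k$---so your approach can be made to work, but the reason is this exponential gain, not finiteness of the sum.
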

\begin{proof}
For the first estimate, we see that the kernel is bounded pointwise by $\Lambda^2$ in the region $0<s\leqslant 1$ by a power series expansion in \eqref{albatros1}, \eqref{albatros2}. In the region $s \geqslant 1$ the kernel is bounded pointwise by $e^{-\rho s} \Lambda s /\langle \Lambda s \rangle, $ and therefore we can use Lemma \ref{K-S} to  conclude. \\
% Note that the $\Lambda^2$ comes from the region $s\geqslant 1,$ when all the derivatives in \eqref{albatros1} hit the sinh's.  Then make use of the bound $\vert \sin s \vert \leqslant \vert s \vert/(1+\vert s \vert)$The bound is much better in the region $0<s <1.$ 
For the resolvent, we can handle the region far from the singularity ($\lambda \geqslant 2 \sqrt{\tau}$) by appealing to the pointwise bound \eqref{point-far-3} with $\beta = \rho-1$ if $d \geqslant 3$ and the log bound in \eqref{point-far-2} if $d=2.$ For the region that is close to the singularity, we use the fact that the kernel is bounded pointwise by $\min \{ s^{2-d}, e^{-\rho s} \}.$ Then we appeal to the Hardy-Littlewood inequality for the region $s \leqslant 1$ and Lemma \ref{K-S} when $s >1.$ \\
The boundedness of the derivative of the resolvent is proved similarly, relying on the pointwise bound \eqref{pt-far-der} far from the singularity and $\sqrt{\tau} \min \{ s^{2-d}, e^{-\rho s} \}$ close to it.
\end{proof}
\subsection{Smoothing estimates}
We start with a smoothing effect for the homogenenous Schr\"{o}dinger equation on $\mathbb{H}^d.$ 
Recall that on $\mathbb{H}^d$ we can define fractional derivatives through functional calculus as 
\begin{align*}
D^{\gamma} := \int \lambda^{\gamma} P_{\lambda} \, d\lambda.
\end{align*}
With this definition we can state our results.
\begin{theorem} \label{smoothing-NLS-hom}
Let $2<p<\infty$ be such that $1/p'-1/p \leqslant 2/d$ if $d \geqslant 3.$ Let 
\begin{align*}
\gamma_p = \left\{ \begin{array}{ll}
\displaystyle 1-d  \big(\frac{1}{2} -\frac{1}{p} \big) & \textrm{if} \ p>p_{ST} \\
\displaystyle \frac{1}{2} - \frac{d-1}{2} \big(\frac{1}{2} - \frac{1}{p} \big) & \textrm{if} \ 2< p \leqslant p_{ST}
\end{array} \right. .
\end{align*}
%In the first line, p smaller than 2d/(d-2) to ensure \gamma>0. 
%In the second line we must have p<2(d-1)/(d-3). It is redundant with the p<p_{ST}.
We have
\begin{align*}
\Vert D_x ^{\gamma_p}  e^{it\Delta_{\mathbb{H}^d}} f \Vert_{L^p_x(L^2_t(\mathbb{R}))} \lesssim \Vert f \Vert_{L^2_x}.
\end{align*}
Moreover 
\begin{align*}
\bigg \Vert \int_0 ^T D^{\gamma_{p}}_x e^{it\Delta_{\mathbb{H}^d}} f(t) dt \bigg \Vert_{L^2_x(\mathbb{H}^d)} \lesssim \Vert f \Vert_{L^{p'}_x(L^2_t (\mathbb{R}))}
\end{align*}
and
\begin{align*}
\bigg \Vert D_x ^{2 \gamma_{p}} \int_{-\infty}^{+\infty} e^{i(t-s)\Delta_{\mathbb{H}^d}} f(s) ds \bigg \Vert_{L^{p}_x(L^2_t (\mathbb{R}))} \lesssim \Vert f \Vert_{L^{p'}_x (L^2_t(\mathbb{R}))}.
\end{align*}
\end{theorem}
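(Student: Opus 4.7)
The three inequalities are linked by a standard $TT^\ast$ argument. Let $T\colon u_0 \mapsto D^{\gamma_p} e^{it\Delta_{\mathbb{H}^d}} u_0$, regarded as an operator from $L^2_x$ to $L^p_x L^2_t(\mathbb{R})$. The first inequality is the boundedness of $T$; its adjoint is $T^\ast g = \int_{\mathbb{R}} D^{\gamma_p} e^{-it\Delta_{\mathbb{H}^d}} g(t,\cdot)\, dt$, whose boundedness $L^{p'}_x L^2_t \to L^2_x$ is the second inequality (the $\int_0^T$ version follows by taking $g$ supported on $[0,T]$ and using the time-reversal $t \mapsto -t$, which preserves all norms involved). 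The composition $TT^\ast f = D^{2\gamma_p} \int e^{i(t-s)\Delta_{\mathbb{H}^d}} f(s,\cdot)\, ds$ then inherits boundedness $L^{p'}_x L^2_t \to L^p_x L^2_t$, which is the third inequality. It therefore suffices to prove the first.

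Using $-\Delta_{\mathbb{H}^d} = \rho^2 + D^2$ and the spectral resolution $\mathrm{Id} = \int_0^\infty P_\lambda\, d\lambda$, I would write
\begin{equation*}
D^{\gamma_p} e^{it\Delta_{\mathbb{H}^d}} u_0(x) = e^{-it\rho^2} \int_0^\infty \lambda^{\gamma_p}\, e^{-it\lambda^2}\, P_\lambda u_0(x)\, d\lambda.
\end{equation*}
After the change of variable $\mu = \lambda^2$, the integral is, up to a constant, a Fourier transform in $\mu$ evaluated at $t$. Plancherel in $t$ then gives, pointwise in $x$,
\begin{equation*}
\bigl\| D^{\gamma_p} e^{it\Delta_{\mathbb{H}^d}} u_0(x) \bigr\|_{L^2_t(\mathbb{R})}^2 \;=\; \pi \int_0^\infty \lambda^{2\gamma_p - 1}\, |P_\lambda u_0(x)|^2\, d\lambda.
\end{equation*}
Applying the $L^{p/2}_x$ norm to both sides together with Minkowski's integral inequality (legitimate since $p/2 \geq 1$) produces
\begin{equation*}
\bigl\| D^{\gamma_p} e^{it\Delta_{\mathbb{H}^d}} u_0 \bigr\|_{L^p_x L^2_t}^2 \;\lesssim\; \int_0^\infty \lambda^{2\gamma_p - 1}\, \| P_\lambda u_0 \|_{L^p_x}^2\, d\lambda.
\end{equation*}

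To bound this spectral integral, I would factor $P_\lambda = E_\lambda R_\lambda$ with $E_\lambda = R_\lambda^\ast$; the $TT^\ast$ identity $\|E_\lambda\|_{L^2(\mathbb{S}^{d-1}) \to L^p_x}^2 = \|P_\lambda\|_{L^{p'} \to L^p}$ then gives $\|P_\lambda u_0\|_{L^p_x}^2 \leq \|P_\lambda\|_{L^{p'} \to L^p}\, \|R_\lambda u_0\|_{L^2(\mathbb{S}^{d-1})}^2$. For $\lambda > 1$, Theorem~\ref{thm-projector} yields $\|P_\lambda\|_{L^{p'} \to L^p} \lesssim \lambda^{\alpha(p,d)}$, and the defining relation $2\gamma_p + \alpha(p,d) = 1$ makes the combined weight $\lambda^{2\gamma_p - 1 + \alpha(p,d)}$ equal to $1$. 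For $\lambda \leq 1$, Lemma~\ref{small-freq} supplies $\|P_\lambda\|_{L^{p'} \to L^p} \lesssim \lambda^2$; a case check using the hypothesis $1/p' - 1/p \leq 2/d$ shows $\gamma_p \geq 0$ throughout the admissible range, so the residual factor $\lambda^{2\gamma_p + 1}$ is integrable on $[0,1]$. Combining both regimes with the $\mathbb{H}^d$-Plancherel identity $\int_0^\infty \|R_\lambda u_0\|_{L^2(\mathbb{S}^{d-1})}^2\, d\lambda \sim \|u_0\|_{L^2(\mathbb{H}^d)}^2$ closes the first estimate, and hence all three. The plan contains no serious obstacle; it is a direct $TT^\ast$ assembly on top of the sharp projector and resolvent bounds already proved. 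The only structurally delicate point is the calibration $2\gamma_p + \alpha(p,d) = 1$: the smoothing exponent $\gamma_p$ is engineered precisely so that the sharp Stein--Tomas/Sogge bound of Theorem~\ref{thm-projector} absorbs the Plancherel weight $\lambda^{2\gamma_p - 1}$ uniformly at high frequency, while the separate low-frequency bound of Lemma~\ref{small-freq} reflects the spectral gap at the bottom of $-\Delta_{\mathbb{H}^d} - \rho^2$.
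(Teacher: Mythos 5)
Your proposal is correct and follows essentially the same route as the paper: change of variable $\mu=\lambda^2$, Plancherel in $t$, Minkowski to pass to $L^2_\lambda L^p_x$, the factorization $P_\lambda = E_\lambda R_\lambda$ together with the sharp projector bounds of Theorem~\ref{thm-projector} at high frequency and Lemma~\ref{small-freq} at low frequency (the calibration $2\gamma_p + \alpha(p,d)=1$ is exactly what the paper encodes in its bound $\|E_{\sqrt\lambda}\|_{L^2\to L^p}\lesssim\min\{\lambda^{1/2},\lambda^{-(2\gamma_p-1)/4}\}$), and finally Helgason--Plancherel. Your explicit $TT^\ast$ framing of the second and third inequalities, and the remark on restriction to $[0,T]$ by time reversal, are just a slightly more spelled-out version of what the paper states in one line.
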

\begin{remark}
Note that under the condition $1/p'-1/p \leqslant 2/d, $ if $d \geqslant 3$ or $d=2,$ we have $\gamma_p \geqslant 0.$
\end{remark}
\begin{proof}
This is a corollary of the restriction estimate from Theorem \ref{thm-projector}. Following \cite{RV} (proofs of Propositions 2.1, 2.2 and 2.3), we change variables, use the Plancherel theorem in $t$ and then Minkowski's inequality in $x$ to obtain
\begin{align*}
\bigg \Vert \int_0 ^{\infty} e^{it\lambda^2}  \lambda^{ \gamma_p} \big[P_{\lambda} f \big] d\lambda \bigg \Vert_{L^p_x L^2_t} &= \frac{1}{2} \bigg \Vert \int_0 ^{\infty} e^{it\lambda} \lambda^{ \frac{\gamma_p}{2}} \big[P_{\sqrt{\lambda}} f \big]  \lambda^{-1/2} d\lambda \bigg \Vert_{L^p_x L^2_t} \\
&  \lesssim \big \Vert \lambda^{ \frac{\gamma_p-1}{2}}  P_{\sqrt{\lambda}} f \big \Vert_{L^p_x L^2 _{\lambda}}  \lesssim \big \Vert  \lambda^{ \frac{\gamma_p-1}{2}} P_{\sqrt{\lambda}} f \big \Vert_{L^2 _{\lambda} L^p_x}. 
\end{align*}
Next we notice that a straightforward consequence of Theorem \ref{thm-projector} and Lemma \ref{small-freq} is that $\Vert E_{\sqrt{\lambda}} \Vert_{L^2_{\omega} \rightarrow L^p} \lesssim \min \big\{ \lambda^{\frac{1}{2}},  \lambda^{- \frac{2\gamma_p-1}{4}} \big\} .$ Writing $P_{\sqrt{\lambda}} = E_{\sqrt{\lambda}} R_{\sqrt{\lambda}},$ we can therefore bound the above by a constant times
\begin{align*}
\bigg(\int_0 ^{\infty}  \Vert R_{\sqrt{\lambda}} f \Vert_{L^2_{\omega}}^2 \lambda^{-1/2} d\lambda \bigg)^{1/2} & \lesssim \bigg(\int_0 ^{\infty}  \Vert \widetilde{f}(\lambda,\omega) \Vert_{L^2_{\omega}}^2 \vert \textbf{c} (\lambda) \vert^{-2} d\lambda \bigg)^{1/2} \lesssim \Vert f \Vert_{L^2_x}.
\end{align*}
For the first inequality we changed variables, and the last line comes from Plancherel's Theorem for the Fourier Helgason transform. \\
The second inequality is the dual of the estimate just obtained. \\
The third inequality of the theorem is proved using successively the first then second inequality of the theorem. 
\end{proof}
\begin{remark} \label{low-mult}
Note that the proof also implies that the same result holds with the multiplier $\langle D_x \rangle.$
\end{remark}
\noindent In the inhomogeneous case we have
\begin{theorem} \label{smoothing-NLS-inhom}
Let $u$ solve
\begin{equation*}
\begin{cases}
i \partial_t u + \Delta_{\mathbb{H}^d} u = f \\
u(x,0) = 0 
\end{cases} .
\end{equation*}
Then for $2 < p < \infty$ such that $$\frac{1}{p'}-\frac{1}{p} \leqslant \frac{2}{d} \ \ \ \textrm{if} \ \  d \geqslant 3,$$
we have 
\begin{align*}
\Vert u \Vert_{L^{p}_x(L^2_t(\mathbb{R}))} \lesssim \Vert f \Vert_{L^{p'}_x(L^2_t(\mathbb{R}))} .
\end{align*}
\end{theorem}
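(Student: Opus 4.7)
The plan is to recast the inhomogeneous smoothing estimate as a uniform resolvent bound via the time Fourier transform. First I would extend $f$ by zero for $t<0$, so that Duhamel's formula reads
$$u(t) = -i \int_{-\infty}^{t} e^{i(t-s)\Delta_{\mathbb{H}^d}} f(s)\,ds,$$
with $u$ supported in $\{t\geq 0\}$. Applying the Fourier transform in $t$ to the equation $i\partial_t u + \Delta_{\mathbb{H}^d} u = f$, and using the causality of $u$ to fix the limiting-absorption prescription, one gets
$$\widehat{u}(\tau,x) = -\bigl(D^2 + \tau + \rho^2 - i0\bigr)^{-1}\widehat{f}(\tau,x).$$

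The crucial step is the uniform-in-$\mu$ resolvent bound
$$\sup_{\mu \in \mathbb{R}}\, \bigl\|(D^2 + \mu - i0)^{-1}\bigr\|_{L^{p'}\to L^p} \lesssim 1, \qquad \mu := \tau+\rho^2,$$
which I would establish in three regimes. For $\mu \leq -1$, write $(D^2+\mu-i0)^{-1} = (D^2 - |\mu| - i0)^{-1}$ and invoke Theorem~\ref{thm-resolvent} (together with the conjugation symmetry $(D^2-\bar z)^{-1} = \overline{(D^2-z)^{-1}}$ to handle the sign of $i0$); the hypothesis $1/p'-1/p \leq 2/d$ for $d\geq 3$ makes the $|\mu|$-exponents appearing in all regions of the theorem non-positive, yielding uniformity. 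For $|\mu|\leq 1$, Lemma~\ref{small-freq} applies directly, its hypothesis coinciding with the one assumed here. For $\mu \geq 1$, I would use the heat-semigroup representation
$$(D^2+\mu)^{-1} = \int_0^{\infty} e^{-\mu s}\, e^{-sD^2}\,ds,$$
which together with positivity of the hyperbolic heat kernel dominates the Green's function of $D^2+\mu$ pointwise by that of $D^2+1$, reducing to the previous case.

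Once the uniform resolvent bound is secured, the conclusion follows from the chain
\begin{align*}
\|u\|_{L^p_x L^2_t} = \|\widehat{u}\|_{L^p_x L^2_\tau} \leq \|\widehat{u}\|_{L^2_\tau L^p_x} \lesssim \|\widehat{f}\|_{L^2_\tau L^{p'}_x} \leq \|\widehat{f}\|_{L^{p'}_x L^2_\tau} = \|f\|_{L^{p'}_x L^2_t},
\end{align*}
where the outer equalities are Plancherel in $t$ applied pointwise in $x$, the first inequality is Minkowski's integral inequality (valid because $p\geq 2$), the middle estimate is the uniform resolvent bound applied pointwise in $\tau$, and the last inequality is Minkowski's integral inequality again (valid because $p'\leq 2$).

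The main obstacle will be the uniform-in-$\mu$ resolvent estimate: assembling Theorem~\ref{thm-resolvent} (for large negative $\mu$), Lemma~\ref{small-freq} (for bounded $|\mu|$), and the heat-semigroup monotonicity trick (for large positive $\mu$) into a single bound is precisely the place where the hypothesis $1/p'-1/p \leq 2/d$ is consumed. Once that is in place, the remainder is a routine Fourier/Minkowski manipulation, and no Christ--Kiselev-type argument is needed since the retarded structure is automatically captured by the $-i0$ prescription on the resolvent.
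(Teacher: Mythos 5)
Your approach---Fourier transform in time, uniform-in-$\mu$ resolvent bound, then Plancherel and Minkowski---is precisely the Ruiz--Vega strategy the paper invokes (via Theorems \ref{thm-resolvent} and \ref{small-freq}), so the structure matches the paper's intended proof. The sign bookkeeping for the $-i0$ prescription via causality, the conjugation trick $(D^2-\bar z)^{-1}=\overline{(D^2-z)^{-1}}$ to flip the side of the limiting absorption, and the Minkowski inequalities (valid since $p'\leq 2\leq p$) are all correct, and you correctly identify that the hypothesis $1/p'-1/p\leq 2/d$ is exactly what makes the $\tau$-exponents in Theorem \ref{thm-resolvent} non-positive, hence uniform over $\tau\geq 1$.

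There is, however, a circularity in your treatment of the below-spectrum regime $\mu>0$. Lemma \ref{small-freq} covers $(D^2-\tau-i\varepsilon)^{-1}$ only for $\tau\in[0,1]$, that is $\mu\in[-1,0]$, so the interval $\mu\in(0,1]$ is \emph{not} covered by ``Lemma \ref{small-freq} applies directly.'' Your heat-semigroup argument for $\mu\geq 1$ then dominates the kernel of $(D^2+\mu)^{-1}$ by that of $(D^2+1)^{-1}$, but $(D^2+1)^{-1}$ is likewise not covered by any earlier case, so the claimed ``reduction to the previous case'' has nothing to land on. The fix is immediate once noticed: the heat-kernel monotonicity gives, for every $\mu\geq 0$, the pointwise domination of the (positive) Green's function of $D^2+\mu$ by that of $D^2$ itself, and the latter is exactly the $\tau=0$, $\varepsilon\to 0^+$ limit of the operators controlled by Lemma \ref{small-freq}. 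Equivalently, one can observe directly that for $\mu\geq 0$ the kernel of $(D^2+\mu)^{-1}$ obeys the same pointwise bounds used in the proof of Lemma \ref{small-freq} uniformly in $\mu$, after which the Hardy--Littlewood--Sobolev plus Kunze--Stein argument carries over verbatim. Either way, the argument should bottom out on an estimate for the resolvent below the spectrum rather than refer back to a case that is not there.
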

\begin{proof}
The proof is the same as in \cite{RV} (proof of Proposition 2.5) using Theorems \ref{thm-resolvent}, \ref{small-freq} and the remark \ref{low-mult} above.
\end{proof}
\begin{remark}
In Theorems \ref{smoothing-NLS-hom} and \ref{smoothing-NLS-inhom} we obtain a wider range of exponents than in the euclidean case. 
\end{remark}

\subsection{Limiting absorption principle for electromagnetic Schr\"{o}dinger on $\mathbb{H}^d$}
The resolvent estimates obtained in Theorem \ref{thm-resolvent} classically lead to a limiting absorption principle for small electromagnetic potentials.  \\
Starting with the electric case, we have
\begin{theorem}
Let $2<p<\infty$ be such that $1/p'-1/p \leqslant 2/d$ if $d \geqslant 3.$ Let $V \in L^{\frac{p}{p-2}}_x.$ Then there exists $\delta>0$ such that if $\Vert V \Vert_{L^{\frac{p}{p-2}}_x} < \delta, $ then denoting $z:=\tau + i \varepsilon, \varepsilon >0,$ we have
\begin{align*}
\sup_{\tau, \varepsilon>0} \Vert (D^2 + V - z)^{-1} \Vert_{L^{p'}\rightarrow L^p} \lesssim 1 .
\end{align*} 
Moreover the limit of $(D^2 + V - z)^{-1}$ as $\varepsilon \to 0$, denoted $(D^2 + V -\tau-i0)^{-1}, $ exists in the sense of distributions and satisfies the same bound as above.
\end{theorem}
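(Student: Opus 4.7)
The plan is to use the standard resolvent perturbation identity
\[
(D^2+V-z)^{-1} = \bigl(I + (D^2-z)^{-1} V\bigr)^{-1}(D^2-z)^{-1},
\]
and invert the factor in parentheses by Neumann series once $\delta$ is chosen small. The key observation is that H\"older's inequality, with the exponent relation $\frac{1}{p'} = \frac{1}{p} + \frac{p-2}{p}$, shows that multiplication by $V$ maps $L^p \to L^{p'}$ with norm $\|V\|_{L^{p/(p-2)}}$. Therefore, composing with the free resolvent gives an operator $(D^2-z)^{-1} V \colon L^p \to L^p$ whose norm is controlled by the product $\|(D^2-z)^{-1}\|_{L^{p'}\to L^p}\|V\|_{L^{p/(p-2)}}$.

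The first key step is to verify that $\|(D^2-z)^{-1}\|_{L^{p'}\to L^p}$ is bounded \emph{uniformly} in $\tau,\varepsilon > 0$. For $\tau > 1$ this follows from Theorem~\ref{thm-resolvent}: the condition $1/p'-1/p \leq 2/d$ ensures that both exponents $\frac{d}{2}(1-\frac{2}{p})-1$ and $\frac{d-1}{2}(\frac{1}{2}-\frac{1}{p})-\frac{1}{2}$ are nonpositive on the relevant ranges (a direct check, including at $p = p_{ST}$, shows the exponents are negative inside the range and vanish only at the endpoint). For $\tau \in (0,1]$, Lemma~\ref{small-freq} provides exactly the required uniform bound. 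Combining these two regimes yields $\sup_{\tau,\varepsilon>0}\|(D^2-z)^{-1}\|_{L^{p'}\to L^p} \lesssim 1$.

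With this in hand, choose $\delta > 0$ small enough that $\|(D^2-z)^{-1}V\|_{L^p \to L^p} \leq 1/2$ uniformly in $z$. Then $I + (D^2-z)^{-1}V$ is invertible on $L^p$ via Neumann series with $\|(I + (D^2-z)^{-1}V)^{-1}\|_{L^p\to L^p}\leq 2$, and composing with $(D^2-z)^{-1}\colon L^{p'}\to L^p$ gives the desired uniform estimate on $(D^2+V-z)^{-1}$.

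For the limiting absorption principle, the plan is to pass to the limit $\varepsilon \to 0^+$ in the identity above. The main obstacle is establishing the distributional limit of the free resolvent $(D^2-\tau-i0)^{-1}$; this can be extracted from the spectral representation and the Plancherel formula for the Helgason transform, writing
\[
(D^2-\tau-i\varepsilon)^{-1} = \int \frac{P_\lambda}{\lambda^2-\tau-i\varepsilon}\,d\lambda,
\]
and applying Sokhotski--Plemelj: the limit exists as a principal value plus $i\pi (2\sqrt{\tau})^{-1} P_{\sqrt{\tau}}$, which makes sense as a continuous operator $L^{p'}\to L^p$ by Theorem~\ref{thm-projector} and the uniform bound above. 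Since $(D^2-z)^{-1}V$ converges strongly on $L^p$ along any sequence $\varepsilon_n \to 0^+$ (for fixed $\tau > 0$), the Neumann series identity passes to the limit, and one recovers $(D^2+V-\tau-i0)^{-1}$ with the same uniform operator bound.
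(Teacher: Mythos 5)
Your proof follows essentially the same route as the paper: the resolvent identity, H\"older's inequality for $V\colon L^p\to L^{p'}$, the uniform free resolvent bound (Theorem~\ref{thm-resolvent} for $\tau>1$, Lemma~\ref{small-freq} for $\tau\leq 1$), and a Neumann series, so the first half is fine. The one place you diverge is the limiting absorption principle: the paper establishes the $\varepsilon\to 0$ limit of the free resolvent via the explicit kernel formula and then preserves the bound by Fatou's lemma, whereas you identify the limit via Sokhotski--Plemelj applied to the spectral representation, $(D^2-\tau-i0)^{-1} = \mathrm{p.v.}\int(\lambda^2-\tau)^{-1}P_\lambda\,d\lambda + i\pi(2\sqrt{\tau})^{-1}P_{\sqrt{\tau}}$, which is more explicit and arguably more informative. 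Both are valid.

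One small inaccuracy: you claim that $(D^2-z)^{-1}V$ ``converges strongly on $L^p$'' as $\varepsilon\to0^+$. What you actually have, combining the distributional convergence of the free resolvent with the uniform $L^{p'}\to L^p$ bound, is \emph{weak} convergence of $(D^2-z)^{-1}Vg$ in $L^p$ for each $g\in L^p$; strong convergence would need a separate argument. Weak convergence is enough, though: it lets you pass each term of the Neumann series to the limit, and weak lower semicontinuity of the $L^p$ norm (which is what the paper's appeal to Fatou amounts to) gives the uniform bound for the limiting operator. So the conclusion stands, but you should say ``weakly'' rather than ``strongly''.
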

\begin{proof}
We first note that for $f \in L^p_x,$, using Theorems \ref{thm-resolvent}, \ref{small-freq}, and then H\"{o}lder's inequality, we have
\begin{align*}
\Vert (D^2-z)^{-1} (V f) \Vert_{L^{p}_x} \lesssim \Vert V f \Vert_{L^{p'}_x} \lesssim \Vert V \Vert_{L^{\frac{p}{p-2}}_x} \Vert f \Vert_{L^p_x}.
\end{align*}
We conclude that
\begin{align} \label{Neumann}
\Vert (D^2-z)^{-1} V \Vert_{L^{p} \rightarrow L^p} \lesssim \Vert V \Vert_{L^{\frac{p}{p-2}}_x}.
\end{align}
Here $(D^2-z)^{-1} V$ denotes the composition of the the flat resolvent with the multiplication by $V$ operator.
We stress that the implicit constants in \eqref{Neumann} are independent of $\varepsilon$ and $\tau$ since under the condition $1/p'-1/p \leqslant 2/d$ if $d \geqslant 3$ or $d=2,$ the exponent of $\tau$ in Theorem \ref{thm-resolvent} is negative. \\
Therefore by a Neumann series argument, the operator $\big( I + (D-z)^{-1} V \big)$ is invertible in $L^p$ if $\Vert V \Vert_{L^{\frac{p}{p-2}}_x}$ is small enough. The desired estimates directly follow from the resolvent identity
\begin{align} \label{resolvent-id}
(D^2+V-z)^{-1} = \big(I + (D^2-z)^{-1} V\big)^{-1} (D^2-z)^{-1}.
\end{align}
To prove the second assertion we note that the limit exists in a distributional sense when $V=0,$ as can be seen from the explicit formula of the kernel of the resolvent. Then using the estimate just proved, along with \eqref{resolvent-id} and Fatou's lemma, the result directly follows.
\end{proof}
In the magnetic case, we have a similar theorem, although we lose uniformity on $\tau$ in the bounds above. 
\begin{theorem}
Let $p>2$ be such that $1/p'-1/p \leqslant 1/d.$ Let $A \in \big(L^{\frac{p}{p-1}}_x\big)^d$ be such that $ \nabla \cdot A \in L^{\frac{p}{p-2}}_x  .$ Let $1>\tau_0>0.$ Then there exists $\delta(\tau_0)>0$ such that if $\Vert A \Vert_{\big(L^{\frac{p}{p-2}}_x\big)^d} ,  \Vert \nabla \cdot A \Vert_{L^{\frac{p}{p-2}}_x} < \delta,$ then denoting $z:=\tau + i \varepsilon, \varepsilon >0,$ we have
\begin{align*}
\sup_{\varepsilon>0,\tau \in (0,\tau_0^{-1}]} \Vert (D^2 + i \big(A \cdot \nabla + \nabla \cdot A \big) - z)^{-1} \Vert_{L^{p'}\rightarrow L^p} \lesssim 1 .
\end{align*} 
Moreover the limit of $(D^2 + i \big(A \cdot \nabla + \nabla \cdot A \big) - z)^{-1}$ as $\varepsilon \to 0$, denoted $(D^2 + i \big(A \cdot \nabla + \nabla \cdot A \big) -\tau-i0)^{-1}, $ exists in the sense of distributions and satisfies the same bound as above.
\end{theorem}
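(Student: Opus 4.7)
The plan is to follow the electric case. Setting $H_A := D^2 + i(A \cdot \nabla + \nabla \cdot A)$, the starting point is the resolvent identity
$$
(H_A - z)^{-1} = \bigl[I + (D^2 - z)^{-1} \, i(A \cdot \nabla + \nabla \cdot A)\bigr]^{-1}(D^2 - z)^{-1}.
$$
Since $(D^2 - z)^{-1}: L^{p'} \to L^p$ is uniformly bounded on $\{\tau \in (0, \tau_0^{-1}], \varepsilon > 0\}$ (combining Theorem~\ref{thm-resolvent}, Lemma~\ref{small-freq}, and the implication $1/p' - 1/p \le 1/d \Rightarrow 1/p' - 1/p \le 2/d$), it will suffice to show that the perturbing operator $(D^2-z)^{-1} i(A \cdot \nabla + \nabla \cdot A)$ has $L^p \to L^p$ norm at most $\tfrac12$ uniformly on the same set, provided $\Vert A \Vert_{L^{p/(p-1)}} + \Vert \nabla \cdot A \Vert_{L^{p/(p-2)}}$ is small enough (with smallness allowed to depend on $\tau_0$). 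A Neumann series expansion then concludes.

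The zeroth-order part, i.e.\ multiplication by the function $\nabla \cdot A$, will be treated exactly as in the electric case: Hölder's inequality yields $\Vert (\nabla \cdot A) f \Vert_{L^{p'}} \le \Vert \nabla \cdot A \Vert_{L^{p/(p-2)}} \Vert f \Vert_{L^p}$, and then $(D^2-z)^{-1}: L^{p'} \to L^p$ closes the loop. The first-order part $(D^2-z)^{-1}(A \cdot \nabla) f$ is the main point. The plan is to write $A \cdot \nabla f = \nabla \cdot (A f) - (\nabla \cdot A) f$ distributionally, reducing matters to controlling $(D^2-z)^{-1} \nabla \cdot (A f)$ in $L^p$; by duality, this is (up to sign) paired against $\nabla (D^2-\bar z)^{-1}$ in the dual norm. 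One then uses the boundedness of the Riesz transform $\nabla (-\Delta_{\mathbb{H}^d})^{-1/2}$ on $L^q$ (classical, see \cite{Anker}) to reduce $\nabla(D^2-\bar z)^{-1}$ to $(-\Delta_{\mathbb{H}^d})^{1/2} (D^2-z)^{-1} = \sqrt{D^2 + \rho^2}\,(D^2-z)^{-1}$, which in turn splits via functional calculus into a high-frequency part controlled by $D(D^2-z)^{-1}$ (Theorem~\ref{thm-Dresolvent}) and a low-frequency part controlled by $(D^2-z)^{-1}$ (Theorem~\ref{thm-resolvent} and Lemma~\ref{small-freq}). The hypothesis $1/p'-1/p\le 1/d$ is exactly what places the resulting pair of exponents on the green line of Figure~\ref{figure2}, where the $\tau$-exponent supplied by Theorem~\ref{thm-Dresolvent} is non-positive on a bounded interval.

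Because Theorem~\ref{thm-Dresolvent} does not provide bounds uniform as $\tau \to \infty$ on this endpoint line, the construction forces capping $\tau$ at $\tau_0^{-1}$, hence the dependence $\delta = \delta(\tau_0)$. The existence of $(H_A - \tau - i 0)^{-1}$ in $\mathcal{S}'$ will follow from the explicit kernel of the free resolvent (which gives the limit when $A=0$), together with the uniform bound just established, the resolvent identity, and Fatou's lemma, exactly as in the electric case. The main obstacle is the first-order piece: unlike multiplicative potentials, it demands control of $\nabla (D^2-z)^{-1}$ rather than of $(D^2-z)^{-1}$ alone. On $\mathbb{H}^d$ this cannot be reduced to functional calculus of $D = \sqrt{-\Delta_{\mathbb{H}^d} - \rho^2}$ directly but must be routed through the Riesz transform, and a careful bookkeeping of Lebesgue exponents is what will force both the hypothesis $1/p' - 1/p \le 1/d$ and the upper bound on $\tau$.
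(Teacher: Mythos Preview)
Your proposal is correct and follows essentially the same route as the paper: resolvent identity plus Neumann series, with the zeroth-order piece $\nabla\cdot A$ handled exactly as in the electric case and the first-order piece $A\cdot\nabla$ handled by duality so as to throw the gradient onto $(D^2-\bar z)^{-1}$ and invoke Theorem~\ref{thm-Dresolvent}. The paper's own proof is a two-line sketch (``the second inequality is proved by duality''); your version simply spells out the intermediate steps (Riesz transform, high/low frequency splitting) that the paper leaves implicit.
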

\begin{proof}
The proof is very similar to the electric case, replacing \eqref{Neumann} with 
\begin{align*}
\big \Vert (D^2 -z)^{-1} \nabla \cdot A \big \Vert_{L^p \rightarrow L^p} \lesssim \Vert \nabla \cdot A \Vert_{L^{\frac{p}{p-2}}_x}  ,  \Vert (D^2 -z)^{-1} A \cdot \nabla \Vert_{L^p \rightarrow L^p} \lesssim_{\tau_0} \Vert A \Vert_{\big(L^{\frac{p}{p-1}}_x\big)^d} ,
\end{align*}
where the second inequality is proved by duality. Moreover the implicit constant does not depend on $\varepsilon,$ but depends on $\tau_0$ in the second inequality.
\end{proof}


\begin{thebibliography}{00}

\bibitem{Anker} 
J.P. Anker, Lp Fourier multipliers on Riemannian symmetric spaces of the noncompact type. \textit{Ann. of Math.} (2) 132 (1990), no. 3, 597--628.

\bibitem{AL}
J.-P. Anker, L. Ji, Heat kernel and Green function estimates on noncompact symmetric spaces, \textit{Geom. Funct. Anal. } 9 (1999), no.6, 1035--1091

\bibitem{AP} 
J.-P. Anker, V. Pierfelice, Nonlinear Schr\"{o}dinger equation on real hyperbolic spaces, \textit{Ann. Inst. H. Poincar\'{e} Anal. Non Lin\'{e}aire} 26 (2009), no. 5, 1853--1869

\bibitem{APV}
J.-P. Anker, V. Pierfelice, M. Vallarino, The wave equation on hyperbolic spaces, \textit{J. Differential Equations} 252 (2012), no.10, 5613--5661

\bibitem{BSSY} J. Bourgain, P. Shao, C. Sogge, X. Yao, On Lp-resolvent estimates and the density of eigenvalues for compact Riemannian manifolds. \textit{Comm. Math. Phys.} 333 (2015), no. 3, 1483--1527.

\bibitem{Bray} W. Bray, Aspects of harmonic analysis on real hyperbolic space. \textit{Fourier analysis (Orono, ME, 1992)}, 77--102, Lecture Notes in Pure and Appl. Math., 157, Dekker, New York, 1994.

\bibitem{CH} 
X. Chen, A. Hassell, Resolvent and spectral measure on non-trapping asymptotically hyperbolic manifolds II: Spectral measure, restriction theorem, spectral multipliers, \textit{Ann. Inst. Fourier (Grenoble)} 68 (2018), no. 3, 1011--1075

\bibitem{CS} J.L. Clerc, E. Stein, Lp-multipliers for noncompact symmetric spaces. \textit{Proc. Nat. Acad. Sci. U.S.A.} 71 (1974), 3911--3912.

\bibitem{Co} M. Cowling, \textit{Herz's "principe de majoration" and the Kunze-Stein phenomenon}, \textit{Harmonic analysis and number theory (Montreal, PQ, 1996)}, 73--88, CMF Conf. Proc., 21, \textit{Amer. Math. Soc., Providence, RI}, 1997

%Cowling Meda Setti

\bibitem{Demeter} C. Demeter, \textit{Fourier restriction, decoupling, and applications.}
Cambridge Studies in Advanced Mathematics, 184 (2020). Cambridge University Press, Cambridge.

\bibitem{DSFKS} D. Dos Santos Ferreira, C. Kenig, M. Salo, On $L^p$ resolvent estimates for Laplace-Beltrami operators on compact manifolds. \textit{Forum Math.} 26 (2014), no. 3, 815--849. 

\bibitem{G} 
S.  Guti\'{e}rrez, Non trivial $L^q$ solutions to the Ginzburg-Landau equation, \textit{Math. Ann.} 328 (2004), no.1-2, 1--25

\bibitem{Helgason} S. Helgason, \textit{Groups and geometric analysis. Integral geometry, invariant differential operators, and spherical functions.} Corrected reprint of the 1984 original. Mathematical Surveys and Monographs, 83. American Mathematical Society, Providence, RI, 2000.

\bibitem{HS} S. Huang, C. Sogge, $L^p$ resolvent estimates for simply connected manifolds of constant curvature, \textit{J. Funct. Anal.} 267 (2014), no. 12, 4635--4666

\bibitem{I1}
A. Ionescu, An endpoint estimate for the Kunze-Stein phenomenon and related maximal operators, \textit{Ann. of Math.} (2) 152 (2000), no.1, 259--275

\bibitem{I2}
A. Ionescu, Fourier integral operators on noncompact symmetric spaces of real rank one, \textit{J. Func. Anal.} 174 (2000), no. 2, 274--300

\bibitem{I3}
A. Ionescu, Singular integrals on symmetric spaces of real rank one, \textit{Duke Math. J.} 114 (2002), no. 1, 101--122

\bibitem{I4}
A. Ionescu, Singular integrals on symmetric spaces II, \textit{Trans. Amer. Math. Soc.} 355 (2003), no. 8, 3359--3378

\bibitem{IS} 
A. Ionescu, G. Staffilani, Semilinear Schr\"{o}dinger flows on hyperbolic spaces: scattering in $H^1$, \textit{Math. Ann.} 345 (2009), no. 1, 133--158

\bibitem{K} 
K. Kaizuka, Resolvent estimates on symmetric spaces of noncompact type, \textit{J. Math. Soc. Japan} 66 (2014), no. 3, 895--926

\bibitem{KRS}
C. Kenig, A. Ruiz, C. Sogge, Uniform Sobolev inequalities and unique continuation for second-order constant-coefficient differential operators, \textit{Duke Math. J.} 55 (1987), 329--347

\bibitem{Ko} 
T.H. Koornwinder, Jacobi functions and analysis on noncompact semisimple Lie groups, in \textit{Special functions:  group theoretical aspects and applications}, 1--85, Math. Appl., Reidel, Dordrecht, 1984 

\bibitem{L}
Z. Li, Endpoint Strichartz estimates for magnetic wave equations on two dimensional hyperbolic spaces, \textit{Differential Integral Equations} 32 (2019), no. 7-8, 369--408

\bibitem{LR}
N. Lohou\'{e}, T. Rychener, Die Resolvente von $\Delta$ auf symmetrischen R\"{a}umen vom nichtkompacten Typ, \textit{Comment. Math. Helv.} 57 (1982), no.3, 445--468

\bibitem{MV}
S. Meda, M. Vallarino, Weak type estimates for spherical multipliers on noncompact symmetric spaces, \textit{Trans. Amer. Math. Soc.} 362 (2010), no.6, 2993--3026

\bibitem{RV} 
A. Ruiz, L. Vega, On local regularity of Schr\"{o}dinger equations, \textit{Internat. Math. Res. Notices} (1993), no. 1, 13--27 

\bibitem{So1}
C. Sogge, Oscilatory integrals and spherical harmonics, \textit{Duke Math. J.} 53 (1986), no.1, 43--65

\bibitem{So2} 
C. Sogge, Concerning the $L^p$ norm of spectral clusters for second-order elliptic operators on compact manifolds, \textit{J. Funct. Anal.} 77 (1988), no. 1, 123--138

\bibitem{So3} C. Sogge, \textit{Fourier integrals in classical analysis. Second edition.} Cambridge Tracts in Mathematics, 210 (2017). Cambridge University Press, Cambridge,

%\bibitem{S} E. Stein, \textit{Interpolation of linear operators}, Trans. Amer. Math. Soc. 83 (1956), 482--492

\bibitem{SW} E. Stein, G. Weiss, \textit{Introduction to Fourier analysis on Euclidean spaces}, Princeton Mathematical Series, No 32, \textit{Princeton University Press, Princeton, N.J.,} 1971. x+297 pp.

\bibitem{St1} R.S. Strichartz, \textit{Harmonic Analysis as Spectral Theory of Laplacians}, J. Funct. Anal. \textbf{87} (1989), 51--148

\bibitem{St2} R.S. Strichartz, \textit{Restrictions of Fourier transforms to quadratic surfaces and decay of solutions of wave equations}, Duke Math. J. \textbf{44} (1977), 705--714

\bibitem{T}
D. Tataru, \textit{Strichartz estimates in the hyperbolic space and global existence for the semilinear wave equation}, Trans. Amer. Math. Soc. \textbf{353} (2001), no.2, 795--807

\bibitem{Taylor0} M. Taylor, \textit{Partial differential equations II. Qualitative studies of linear equations. Second edition.} Applied Mathematical Sciences, 116. Springer, New York, 2011.

\bibitem{Taylor} M. Taylor, Lp-estimates on functions of the Laplace operator. \textit{Duke Math. J.} 58 (1989), no. 3, 773--793. 

\bibitem{To} 
P. Tomas, A restriction theorem for the Fourier transform, \textit{Bull. Amer. Math. Soc.} 81 (1975), 477--478

\end{thebibliography}
\end{document}